\documentclass[11pt]{article}
\usepackage{pb-diagram}
\usepackage{amsmath,amsthm,amsfonts,amssymb,latexsym}
\usepackage{amsmath}
\usepackage{hyperref}
\usepackage{a4wide}
\usepackage[all]{xy}


\vfuzz2pt 
\hfuzz2pt 

\newtheorem{thm}{Theorem}[section]
\newtheorem{cor}[thm]{Corollary}
\newtheorem{lem}[thm]{Lemma}
\newtheorem{prop}[thm]{Proposition}
\newtheorem{ex}[thm]{Example}
\newtheorem{defn}[thm]{Definition}
\newtheorem{defns}[thm]{Definitions}
\newtheorem{rmk}[thm]{Remark}


\newtheorem{theorem}{Theorem}[section]
\newtheorem{definition}[theorem]{Definition}

\newtheorem{remark}[theorem]{Remark}

\newcommand{\ad}{\mathrm{ad}}

\frenchspacing


\begin{document}

\title{Representations, extensions and deformations of $n$-BiHom-Lie algebras}
\author{Ismail Laraiedh \thanks{Departement of Mathematics, Faculty of Sciences, Sfax University, BP 1171, 3000 Sfax, Tunisia. E.mail:
Ismail.laraiedh@gmail.com and Departement of Mathematics, College of Sciences and Humanities - Kowaiyia, Shaqra University,
Kingdom of Saudi Arabia. E.mail:
ismail.laraiedh@su.edu.sa}}
\maketitle
\begin{abstract}
In this paper we define and discuss the representations of $n$-BiHom-Lie algebra. We also introduce $T_{\theta}$-extensions and $T_{\theta}^{\ast}$-extensions of $n$-BiHom-Lie algebras and prove the necessary and sufficient conditions for
a $2m$-dimensional quadratic $n$-Bihom-Lie algebra to be isomorphic to a $T_{\theta}^{\ast}$-extension. Moreover, we develop the one-parameter formal deformations of $n$-BiHom-Lie algebras, and we proved that the first
and second cohomology groups are suitable to the deformation theory involving
infinitesimals, equivalent deformations, and rigidity



\end{abstract}
{\bf 2010 Mathematics Subject Classification:} 17A30, 17B10, 17C50, 17D15.

{\bf Keywords:} $n$-BiHom-Lie algebras, Representations, Extensions, Cohomology, Deformations.
\section{Introduction}

The n-Lie algebras found their applications in many fields of Mathematics and Physics. Ternary Lie algebras appeared first in Nambu generalization of Hamiltonian mechanics \cite{Nambu:GenHD}
using ternary bracket generalization of Poisson algebras.  Nambu mechanics \cite{Nambu:GenHD} involves an $n$-ary product that satisfies the $n$-ary Nambu identity, which is an $n$-ary generalization of the Jacobi identity.  Bagger-Lambert algebras \cite{bl} are ternary Nambu algebras with some extra structures, and they appear in the study of string theory and $M$-branes.
The cohomology of $n$-Lie algebras, generalizing the Chevalley-Eilenberg Lie algebras cohomology, was  introduced by Takhtajan \cite{Tcohomology} in its simplest form, later a complex adapted
to the study of formal deformations was introduced by Gautheron \cite{Gautheron}, then reformulated
by Daletskii and Takhtajan \cite{DT} using the notion of base Leibniz algebra of an $n$-Lie algebra.

Hom-type generalizations of $n$-ary Nambu(-Lie) algebras, called $n$-ary Hom-Nambu(-Lie) algebras, were introduced by Ataguema, Makhlouf, and Silvestrov in \cite{ams} .  Each $n$-ary Hom-Nambu(-Lie) algebra has $n-1$ linear twisting maps, which appear in a twisted generalization of the $n$-ary Nambu identity called the $n$-ary Hom-Nambu identity. If the twisting maps are all equal to the identity, one recovers an $n$-ary Nambu(-Lie) algebra.

A generalization of this approach led the authors of \cite{GRAZIANI} to introduce BiHom-algebras, which are algebras where the identities defining the structure are twisted by two homomorphisms $\alpha$ and $\beta$. These
algebraic structures include BiHom-associative algebras, BiHom-Lie algebras and BiHom-bialgebras. More applications of BiHom-algebras, BiHom-Lie superalgebras and BiHom-Novikov algebras can be
found in \cite{S S, Guo1, chtioui1,chtioui2, ismail, Li C, Liu2}. BiHom-type generalizations of n-ary Nambu-Lie algebras, called n-ary BiHom-Nambu-
Lie algebras, were introduced by Kitouni, Makhlouf, and Silvestrov in \cite{KMSi}. Each n-ary
BiHom-Nambu-Lie algebra has (n - 1)-linear twisting maps, which appear in a twisted
generalization of the n-ary Nambu identity called the n-ary BiHom-Nambu identity.

The representation theory of an algebraic object is very important since it reveals some of its
profound structures hidden underneath.
The notion of a representation of a $3$-BiHom-Lie algebra was introduced in \cite{Li C}.
In
this paper, we define and discusses representations of $n$-BiHom-Lie algebras
Also
The extension is an important way to find a larger algebra and there are many extensions such as double extensions and Kac-Moody extensions, etc. In 1997, Bordemann introduced the notion of $T^*$-extensions of Lie algebras\cite{B} and proved that every nilpotent finite-dimensional algebra over an algebraically closed field carrying a nondegenerate invariant symmetric bilinear form is a suitable $T^*$-extension. The method of $T^*$-extension was used in \cite{BBM} and was generalized to many other algebras recently\cite{lyc,YL}. Furthermore, deformation problems appear in various areas of mathematics, especially in algebra, algebraic and analytic geometry, and mathematical physics. The deformation theory was introduced by Kodaira and Spencer to study complex structures of higher dimensional manifolds (see \cite{Kodaira&Spencer}), which was extended to rings and algebras by Gerstenhaber in \cite{Gerstenhaber1, Gerstenhaber4} and to Lie algebras by Nijenhuis and Richardson in \cite{Nijenhuis&Richardson}. They connected deformation
theory for associative algebras and Lie algebras with Hochschild cohomology and Chevally-Eilenberg cohomology, respectively. See also \cite{Ammar&Ejbehi&Makhlouf, Elhamdadi&Makhlouf, Kubo&Taniguchi, Ma&Chen&Lin, Makhlouf&Silvestrov, deforr} for more deformation theory.

The paper is organized as follows. In Section 2, summarizes basic concepts and recall the definition
of $n$-Bihom-Lie algebras. In Section 3,
we give the definition of representations of $n$-Bihom-Lie algebras and we can obtain the semidirect product $n$-Bihom-Lie algebra associated with any representation $\rho$ of an $n$-Bihom-Lie algebra $\mathfrak{g}$ on a vector space $V$. In Section 4, $T_\theta^*$-extensions and $T_\theta$-extensions of $n$-Bihom-Lie algebras are studied. We give some properties and we prove the necessary and sufficient
conditions for a $2m$-dimensional quadratic $n$-Bihom-Lie algebra to be isomorphic to a $T_\theta^*$-extension. In section 5, we define low orders coboundary operators and cohomology groups of
$n$-BiHom-Lie algebras and develop the $1$-parameter formal deformation theory.

In this paper, we work over an algebraically closed field $\mathbb{K}$ of characteristic 0 and all the vector spaces are over $\mathbb{K}$. Everywhere hereafter, the notation $\widehat{x}$ means that $x$ is excluded, for example, we write $f(x_{1},\cdots,\widehat{x_{i}},\cdots,x_{n})$ for $f(x_{1},\cdots,x_{i-1},x_{i+1},\cdots,x_{n})$.

\section{Preliminaries}
This section contains necessary important basic notions and notations which will be used in next sections.
\begin{definition}\cite{KMSi}
An $n$-BiHom-Lie algebra is a vector space $\mathfrak{g}$, equipped with an $n$-linear operation $[\cdot,...,\cdot]$ and two linear maps $\alpha$ and $\beta$ satisfying the following conditions:
\begin{enumerate}
\item $\alpha \circ \beta = \beta \circ \alpha$.
\item $\forall x_1,...,x_n \in \mathfrak{g}, \alpha([x_1,...,x_n]_{\mathfrak{g}}) = [\alpha(x_1),...,\alpha(x_n)]_{\mathfrak{g}}$ and $\beta([x_1,...,x_n]_{\mathfrak{g}}) = [\beta(x_1),...,\beta(x_n)]_{\mathfrak{g}}$.
\item BiHom-skewsymmetry: $\forall x_1,...,x_n \in \mathfrak{g}$,

$\begin{array}{lllll}
&&[\beta(x_{1}),\ldots,\beta(x_{k}),\beta(x_{k+1}),\ldots,\beta(x_{n-1}),\alpha(x_{n})]_{\mathfrak{g}}\\
&=&-[\beta(x_{1}),\ldots,\beta(x_{k+1}),\beta(x_{k}),\ldots,\beta(x_{n-1}),\alpha(x_{n})]_{\mathfrak{g}}\\
&=&-[\beta(x_{1}),\ldots,\beta(x_{n-2}),\beta(x_{n}),\alpha(x_{n-1})]_{\mathfrak{g}},\end{array}$

where $k=1,2, \dots , n-2.$
\item $n$-BiHom-Jacobi identity: $\forall x_1,..,x_{n-1},y_1,...,y_n \in \mathfrak{g}$,
\begin{align*}
&[\beta^2(x_1),...,\beta^2(x_{n-1}),[\beta(y_1),...,\beta(y_{n-1}),\alpha(y_n)]_{\mathfrak{g}}]_{\mathfrak{g}}\\
& =\sum_{k=1}^n (-1)^{n-k} [\beta^2(y_1),...,\widehat{\beta^2(y_k)},...,\beta^2(y_n),[\beta(x_1),...,\beta(x_{n-1}),\alpha(y_k)]_{\mathfrak{g}}]_{\mathfrak{g}}.
\end{align*}
\end{enumerate}
\end{definition}

\begin{remark}
\begin{enumerate}
\item
When $\alpha=\beta=id$, we get $n$-Lie algebra
\item
For $n=2$, the BiHom-Jacobi identity is:
\begin{equation}\label{ll}[\beta^{2}(x),[\beta(y_{1}),\alpha(y_{2}]_{\mathfrak{g}}]_{\mathfrak{g}}=-[\beta^{2}(y_{2}),[\beta(x),\alpha(y_{1}]_{\mathfrak{g}}]_{\mathfrak{g}}
+[\beta^{2}(y_{1}),[\beta(x),\alpha(y_{2}]_{\mathfrak{g}}]_{\mathfrak{g}},\end{equation}
and the identity (\ref{ll}) is equivalent to
$$\displaystyle{\circlearrowleft_{x,y_{1},y_{2}}}[\beta^{2}(x),[\beta(y_{1}),\alpha(y_{2}]_{\mathfrak{g}}]_{\mathfrak{g}}=0.$$
\item
For $n=3$, the $3$-BiHom-Jacobi identity is:
$$
\begin{array}{lllllll}&&[\beta^{2}(x_{1}),\beta^{2}(x_{2}),[\beta(y_{1}),\beta(y_{2}),\alpha(y_{3})]_{\mathfrak{g}}]_{\mathfrak{g}}\\
&=&[\beta^{2}(y_{2}),\beta^{2}(y_{3}),[\beta(x_{1}),\beta(x_{2}),\alpha(y_{1})]_{\mathfrak{g}}]_{\mathfrak{g}}\\
&&-[\beta^{2}(y_{1}),\beta^{2}(y_{3}),[\beta(x_{1}),\beta(x_{2}),\alpha(y_{2})]_{\mathfrak{g}}]_{\mathfrak{g}}\\
&&+[\beta^{2}(y_{1}),\beta^{2}(y_{2}),[\beta(x_{1}),\beta(x_{2}),\alpha(y_{3})]_{\mathfrak{g}}]_{\mathfrak{g}},\end{array}
$$
\end{enumerate}
\end{remark}
\begin{theorem}\label{YauTwist}\cite{KMSi}
Let $(\mathfrak{g},[\cdot,...,\cdot]_{\mathfrak{g}})$ be an $n$-Lie algebra, and let $\alpha,\beta : \mathfrak{g} \to \mathfrak{g}$ be algebra morphisms such that $\alpha \circ \beta = \beta \circ \alpha$. The algebra $(\mathfrak{g}, [\cdot,...,\cdot]_{\alpha\beta},\alpha,\beta)$, where $[\cdot,...,\cdot]_{\alpha\beta}$ is defined by
\[[x_1,...,x_n]_{\alpha\beta} = [\alpha(x_1),...,\alpha(x_{n-1}),\beta(x_n)]_{\mathfrak{g}},\]
is an $n$-BiHom-Lie algebra.
\end{theorem}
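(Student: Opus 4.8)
The plan is to verify directly the four axioms of an $n$-BiHom-Lie algebra for the twisted structure $(\mathfrak{g},[\cdot,\ldots,\cdot]_{\alpha\beta},\alpha,\beta)$, and the entire argument will rest on one observation: since $\alpha$ and $\beta$ commute and are both morphisms of the original bracket $[\cdot,\ldots,\cdot]_{\mathfrak{g}}$, any composite twist such as $\beta\alpha$, $\alpha\beta$ or $\beta\alpha\beta$ can be freely rearranged, so every twisted bracket collapses onto the underlying totally antisymmetric $n$-Lie bracket applied to uniformly twisted arguments. Axiom (1) is exactly the hypothesis $\alpha\circ\beta=\beta\circ\alpha$, so nothing is required. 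For axiom (2) I would compute, using that $\alpha$ is a morphism of $[\cdot,\ldots,\cdot]_{\mathfrak{g}}$,
\[
\alpha\bigl([x_1,\ldots,x_n]_{\alpha\beta}\bigr)=[\alpha^2(x_1),\ldots,\alpha^2(x_{n-1}),\alpha\beta(x_n)]_{\mathfrak{g}},
\]
and compare it with $[\alpha(x_1),\ldots,\alpha(x_n)]_{\alpha\beta}=[\alpha^2(x_1),\ldots,\alpha^2(x_{n-1}),\beta\alpha(x_n)]_{\mathfrak{g}}$; these agree by commutativity, and the computation for $\beta$ is identical.

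For the BiHom-skewsymmetry (axiom 3), I would first note that commutativity $\alpha\beta=\beta\alpha$ gives
\[
[\beta(x_1),\ldots,\beta(x_{n-1}),\alpha(x_n)]_{\alpha\beta}=[\alpha\beta(x_1),\ldots,\alpha\beta(x_{n-1}),\beta\alpha(x_n)]_{\mathfrak{g}}=[\alpha\beta(x_1),\ldots,\alpha\beta(x_n)]_{\mathfrak{g}}.
\]
Since $[\cdot,\ldots,\cdot]_{\mathfrak{g}}$ is totally antisymmetric, transposing the arguments in positions $k$ and $k+1$, or those in positions $n-1$ and $n$, produces exactly the sign $-1$ demanded by the two equalities in the skewsymmetry axiom.

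The substantive part is the $n$-BiHom-Jacobi identity (axiom 4). Writing $X_i=\alpha\beta^2(x_i)$ and $Y_j=\alpha\beta^2(y_j)$, I would reduce both sides to the original bracket. On the left, expanding the inner bracket and then pushing the outer $\beta$ through it (using $\beta\alpha\beta=\alpha\beta^2$) yields
\[
[\beta^2(x_1),\ldots,\beta^2(x_{n-1}),[\beta(y_1),\ldots,\beta(y_{n-1}),\alpha(y_n)]_{\alpha\beta}]_{\alpha\beta}=[X_1,\ldots,X_{n-1},[Y_1,\ldots,Y_n]_{\mathfrak{g}}]_{\mathfrak{g}},
\]
while the $k$-th summand on the right reduces, by the same collapse of twists, to
\[
(-1)^{n-k}[Y_1,\ldots,\widehat{Y_k},\ldots,Y_n,[X_1,\ldots,X_{n-1},Y_k]_{\mathfrak{g}}]_{\mathfrak{g}}.
\]

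The remaining task is to recognize these as the classical Nambu (fundamental) identity of the $n$-Lie algebra $\mathfrak{g}$, and this is where the only real difficulty lies, namely the sign bookkeeping. In its standard form the inner bracket $[X_1,\ldots,X_{n-1},Y_k]_{\mathfrak{g}}$ occupies the $k$-th slot; moving it to the last slot requires it to pass the $n-k$ entries $Y_{k+1},\ldots,Y_n$, and by total antisymmetry each transposition contributes a factor $-1$, producing precisely $(-1)^{n-k}$. Hence the Nambu identity for $[\cdot,\ldots,\cdot]_{\mathfrak{g}}$ rewrites term by term into the identity derived above, so the $n$-BiHom-Jacobi identity for $[\cdot,\ldots,\cdot]_{\alpha\beta}$ holds, and all four axioms are verified.
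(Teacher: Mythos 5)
Your proof is correct and complete. The paper states this theorem only as a citation from \cite{KMSi} and gives no proof of its own, so there is no internal argument to compare against; your verification --- collapsing every twisted bracket onto the original bracket with uniformly twisted arguments ($\alpha\beta$ for the skewsymmetry, $\alpha\beta^{2}$ for the Jacobi identity) via commutativity and the morphism property, and then invoking the classical Filippov/Nambu identity with the factor $(-1)^{n-k}$ produced by the $n-k$ transpositions that move the inner bracket from slot $k$ to the last slot --- is exactly the standard argument, and all the sign bookkeeping checks out.
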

\begin{ex}
We consider the $5$-dimensional $3$-Lie algebra defined with respect to a basis $\mathfrak{g}=(e_1,e_2,e_3,e_4,e_5)$, by:
\[ [e_2,e_3,e_4]_\mathfrak{g}=e_1 ; [e_2,e_4,e_5]_\mathfrak{g}=-e_2 ; [e_3,e_4,e_5]_\mathfrak{g}=e_3.\]

We have two morphisms $\alpha$, $\beta$ of this algebra defined, with respect to the same basis, by:
\[ [\alpha]=\begin{pmatrix}-1 &0 & 0 & 0 & 0 \\ 0 & 1&0  & 0 & 0 \\ 0 & 0  & 1&0 & 0 \\ 0 & 0  & 0& 1&0  \\ 0 & 0 & 0 & 0 &-1 \end{pmatrix} ;
[\beta]= \begin{pmatrix} 0 & 0 & 0 & 0 & -1 \\ 0 & 0 & 0 & -1 & 0 \\ 0 & 0 & -1 & 0 & 0 \\  0 &1 & 0 & 0 & 0 \\  1 &0 & 0 & 0 & 0 \end{pmatrix}. \]
One can easily check that $\alpha$ and $\beta$ commute, then $(\mathfrak{g}, [\cdot,\cdot,\cdot]_{\alpha\beta},\alpha,\beta)$ is a $3$-BiHom-Lie algebra.
\end{ex}
\begin{definition}
 A morphism   $f : (\mathfrak{g}, [\cdot, \cdots, \cdot]_{\mathfrak{g}}, \alpha,\beta)\rightarrow (\mathfrak{g}', [\cdot, \dots, \cdot]_{\mathfrak{g}'}, \alpha',\beta')$ of an $n$-BiHom-Lie algebras
 is a linear map $f :\mathfrak{g}\rightarrow \mathfrak{g}'$ such that
$f\circ\alpha=\alpha'\circ f,~~f\circ\beta=\beta'\circ f$  and for any $x_k\in\mathfrak{g}$,
$$f([x_1, \cdots, x_n]_{\mathfrak{g}})=[f(x_1), \cdots, f(x_n)]_{\mathfrak{g}'}$$
\end{definition}
\begin{defns}
Let $(\mathfrak{g},[\cdot,\cdots,\cdot]_{\mathfrak{g}},\alpha,\beta)$ an $n$-BiHom-Lie algebra.
\begin{enumerate}
\item
A subspace $H$ of $\mathfrak{g}$ is an $n$-BiHom-subalgebra of $(\mathfrak{g},[.,\cdots,.]_{\mathfrak{g}},\alpha,\beta)$ if $\alpha(H)\subset H$,
$\beta(H)\subset H$ and $[H,\cdots,H]_{\mathfrak{g}}\subseteq H$, (i.e.,
$[x_{1},\cdots,x_{n}]_{\mathfrak{g}}\in H,~~\forall x_{k}\in H$).
\item
A subspace $I\subset \mathfrak{g}$ is an $n$-BiHom ideal of $(\mathfrak{g},[.,\cdots,.]_{\mathfrak{g}},\alpha,\beta)$ if $\alpha(I)\subset I$, $\beta(I)\subset I$\\
and $[I,\mathfrak{g},\cdots,\mathfrak{g}]_{\mathfrak{g}}\subseteq I$, (i.e.
$[x,y_{1},\cdots,y_{n-1}]_{\mathfrak{g}}\in I,~~\forall x\in I;~y_{k}\in \mathfrak{g}$.
\item
The center of $(\mathfrak{g},[.,\cdots,.]_{\mathfrak{g}},\alpha,\beta)$ is the set of $x\in \mathfrak{g}$ such that\\ $[x,y_{1},\cdots,y_{n-1}]_{\mathfrak{g}}=0,$ for any $y_{i}\in \mathfrak{g}$. The center is an ideal of $\mathfrak{g}$ which we will denote by $Z(\mathfrak{g})$.
\end{enumerate}
\end{defns}
\begin{defns}\cite{KMSi}
Let $(\mathfrak{g},[\cdot,\cdots,\cdot]_{\mathfrak{g}},\alpha,\beta)$ be an $n$-BiHom-Lie algebra.
We define, for all $X = x_1\wedge ... \wedge x_{n-1}$, $Y = y_1\wedge ... \wedge y_{n-1}$ in $\wedge^{n-1}\mathfrak{g}$ and $z\in \mathfrak{g}$ the following:
\begin{enumerate}
\item The action of fundamental objects on $\mathfrak{g}$:
\[ X \cdot z = ad_X(z) = [x_1,...,x_{n-1},z]_{\mathfrak{g}}. \]
\item The linear maps $\bar{\alpha},\bar{\beta} : \wedge^{n-1}\mathfrak{g} \to \wedge^{n-1}\mathfrak{g}$:
\[\widetilde{\alpha}(X) = \alpha(x_1)\wedge...\wedge \alpha(x_{n-1}) \text{ and } \widetilde{\beta}(X) = \beta(x_1)\wedge...\wedge \beta(x_{n-1}).\]
\end{enumerate}
\end{defns}

\section{Representations of $n$-BiHom-Lie algebras}
\begin{defn}\label{defi:rep}
A {\bf representation} of an $n$-BiHom-Lie algebra $(\mathfrak{g},[\cdot,\cdots,\cdot]_\mathfrak{g},\alpha,\beta)$ on a vector space $V$ with respect to a endomorphisms $\alpha_{V},\beta_{V}\in End(V)$ is a linear map $\rho:\wedge ^{n-1}\mathfrak{g}\longrightarrow End(V)$ such that for all $X=x_1\wedge\cdots\wedge x_{n-1},~Y=y_1\wedge\cdots\wedge y_{n-1}\in\wedge^{n-1}\mathfrak{g}$, we have
\begin{enumerate}
  \item$\rho(\widetilde{\alpha}(X) )\circ\alpha_{V}=\alpha_{V}\circ \rho(X);$
 \item$\rho(\widetilde{\beta}(X) )\circ\beta_{V}=\beta_{V}\circ \rho(X);$

  \item \begin{eqnarray*} &&\rho(\widetilde{\alpha\beta}(X))\circ\rho(Y)-\rho(\widetilde{\beta}(Y))\circ \rho(\widetilde{\alpha}(X))\\&=&\sum_{i=1}^{n-1}\rho(\beta(y_1),\cdots,\beta(y_{i-1}),[\beta(x_1),\cdots,\beta(x_{n-1}),y_i]_\mathfrak{g},\beta(y_{i+1})
 ,\cdots,\beta(y_{n-1}))\circ \beta_{V};\end{eqnarray*}

    \item\begin{eqnarray*}
        &&\rho(\beta(y_2),\cdots,\beta(y_{n-1}),[\beta(x_1),\cdots,\beta(x_{n-1}),y_1]_\mathfrak{g})\circ\beta_{V}\\
        &=&\sum_{i=1}^{n-1}(-1)^{n-i}\rho(\alpha\beta(x_1),\cdots,\widehat{\alpha\beta}(x_i),\cdots,\alpha\beta(x_{n-1}),\beta(y_1))\circ\rho(y_2,\cdots,y_{n-1},\alpha(x_i))\\
        &+&\rho(\widetilde{\alpha\beta}(X))\circ\rho(Y).
        \end{eqnarray*}
\end{enumerate}
\end{defn}
We denote a representation by $(V,\rho,\alpha_{V},\beta_{V})$.
\begin{prop} Let $(\mathfrak{g},[\cdot,\cdots,\cdot]_\mathfrak{g},\alpha,\beta)$ be an $n$-BiHom-Lie algebra.
Define $ad:\wedge^{n-1}\mathfrak{g}\longrightarrow End(\mathfrak{g})$ by
\begin{equation}
  \ad_X(y)=[x_1,\cdots,x_{n-1},y]_{\mathfrak{g}},\quad\forall X=x_1\wedge\cdots \wedge x_{n-1}\in\wedge^{n-1}\mathfrak{g},~y\in\mathfrak{g}.
\end{equation}
Then $(\mathfrak{g},\ad,\alpha,\beta)$ is a representation of the $n$-BiHom-Lie algebra $(\mathfrak{g},[\cdot,\cdots,\cdot]_\mathfrak{g},\alpha,\beta)$ on $\mathfrak{g}$, called {\bf adjoint representation}.
\end{prop}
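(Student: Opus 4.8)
The plan is to check, one by one, the four conditions of Definition~\ref{defi:rep} for the data $V=\mathfrak{g}$, $\rho=\ad$, $\alpha_V=\alpha$ and $\beta_V=\beta$. In each case I would evaluate both sides on an arbitrary element $z\in\mathfrak{g}$ and expand everything through the defining formula $\ad_X(y)=[x_1,\ldots,x_{n-1},y]_{\mathfrak{g}}$, so that every condition becomes an identity between iterated brackets in $\mathfrak{g}$.

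Conditions (1) and (2) are the quickest. Evaluating (1) on $z$ gives $\ad_{\widetilde{\alpha}(X)}(\alpha(z))=[\alpha(x_1),\ldots,\alpha(x_{n-1}),\alpha(z)]_{\mathfrak{g}}$, which by the multiplicativity of $\alpha$ (item~(2) in the definition of an $n$-BiHom-Lie algebra) equals $\alpha([x_1,\ldots,x_{n-1},z]_{\mathfrak{g}})=\alpha(\ad_X(z))$; condition (2) is identical with $\beta$ replacing $\alpha$. Neither uses the Jacobi identity.

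For condition (3), evaluating on $z$ turns the left-hand side into the difference $[\alpha\beta(x_1),\ldots,\alpha\beta(x_{n-1}),[y_1,\ldots,y_{n-1},z]_{\mathfrak{g}}]_{\mathfrak{g}}-[\beta(y_1),\ldots,\beta(y_{n-1}),[\alpha(x_1),\ldots,\alpha(x_{n-1}),z]_{\mathfrak{g}}]_{\mathfrak{g}}$ and the right-hand side into $\sum_{i=1}^{n-1}[\beta(y_1),\ldots,[\beta(x_1),\ldots,\beta(x_{n-1}),y_i]_{\mathfrak{g}},\ldots,\beta(y_{n-1}),\beta(z)]_{\mathfrak{g}}$. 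I would derive this from the $n$-BiHom-Jacobi identity read with the $n$ entries $y_1,\ldots,y_{n-1},z$ in place of $y_1,\ldots,y_n$, transposing the term coming from the last entry $z$ to the other side. Condition (4) is handled in the same spirit: after evaluation on $z$ it becomes the Jacobi identity organised around the $x$-variables, the omitted slot $\widehat{\alpha\beta(x_i)}$ matching the hat $\widehat{\beta^2(y_k)}$ of the identity and the sign $(-1)^{n-i}$ matching the alternating sign $(-1)^{n-k}$.

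The substitution of the $\ad$ formula is routine; the delicate point, and the main obstacle, is matching the two identities term by term. First, the inner bracket produced by $\ad$ always sits in the last argument slot, whereas in the reorganised Jacobi identity it must occupy the $i$-th slot: moving it there is exactly where the BiHom-skewsymmetry (item~(3) of the $n$-BiHom-Lie definition) enters, and it is what produces the signs $(-1)^{n-i}$ in condition (4). Second, and more seriously, the twists must be reconciled: the Jacobi identity carries the prefactors $\beta^2$ and $\alpha$ on its entries while conditions (3) and (4) carry $\alpha\beta$ and $\beta$, so one has to use $\alpha\circ\beta=\beta\circ\alpha$ together with the multiplicativity of $\alpha$ and $\beta$ --- and, where entries must be ``de-twisted'', the regularity (bijectivity) of $\alpha$ and $\beta$ --- to bring every entry to the correct twisting before invoking the identity. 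Once the signs and all the twisting maps are aligned, the four conditions hold and $(\mathfrak{g},\ad,\alpha,\beta)$ is a representation.
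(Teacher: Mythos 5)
Your proposal is the same direct verification that the paper gestures at --- the paper's entire proof is the single sentence ``Follows a direct computation by the definition of representations'' --- and your fleshing-out of that computation is sound: conditions (1)--(2) of Definition~\ref{defi:rep} reduce to multiplicativity of $\alpha$ and $\beta$, and conditions (3)--(4), once evaluated on $z$ and expanded through $\ad$, are reorganisations of the $n$-BiHom-Jacobi identity in which the $k=n$ term is transposed to the other side and the inner bracket is carried into the $i$-th slot by BiHom-skewsymmetry (the signs cancelling in condition (3) and surviving as $(-1)^{n-i}$ in condition (4)). Your expansions of both sides of conditions (3) and (4) are exactly right.

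The one point worth dwelling on is the one you flag yourself: the ``de-twisting'' step genuinely requires $\alpha$ and $\beta$ to be bijective, and that hypothesis is \emph{not} in the statement of the proposition. This is a defect of the paper's formulation rather than of your argument, but you should be aware of its force. Concretely, in condition (3) for $\ad$ the entries of the inner bracket $[y_1,\dots,y_{n-1},z]_{\mathfrak{g}}$ occur untwisted, whereas every slot of the $n$-BiHom-Jacobi identity carries a twist ($\beta^2$, $\beta$ or $\alpha$); to realise condition (3) as an instance of that identity one must solve $\beta(v_j)=y_j$ and $\alpha(v_n)=z$, which needs surjectivity, and the subsequent skewsymmetry moves need each entry rewritten in the form $\beta(\cdot)$ or $\alpha(\cdot)$, which needs injectivity as well (e.g.\ the slot occupied by $[\beta(x_1),\dots,\beta(x_{n-1}),y_i]_{\mathfrak{g}}$ must be recognised as $\beta([x_1,\dots,x_{n-1},\beta^{-1}(y_i)]_{\mathfrak{g}})$ before it can be permuted). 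So what you have written is a correct proof for regular $n$-BiHom-Lie algebras (bijective $\alpha,\beta$), which is almost certainly the intended setting --- note that the neighbouring semidirect-product Proposition~\ref{prop11} does assume bijectivity of $\alpha$ and $\beta_V$, and the paper later speaks of quadratic \emph{regular} $n$-BiHom-Lie algebras --- but no argument along these lines works at the stated level of generality, and the paper's one-line proof silently skips over exactly this issue.
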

\begin{proof}
Follows a direct computation by the definition of representations.
\end{proof}

\begin{prop}\label{prop11}
Let  $(V,\rho,\alpha_{V},\beta_{V})$ be a representation of an $n$-BiHom-Lie algebra $(\mathfrak{g},[\cdot,\cdots,\cdot]_\mathfrak{g},\alpha,\beta)$. Assume that the maps $\alpha$ and $\beta_{V}$ are bijective. Then $(\mathfrak{g}\oplus V,[\cdot,\cdots,\cdot]_{\rho},\alpha+\alpha_{V},\beta+\beta_{V})$ is an $n$-BiHom-Lie algebra, where $\alpha+\alpha_{V},\beta+\beta_{V}: \mathfrak{g}\oplus V\rightarrow\mathfrak{g}\oplus V$ are defined by
$(\alpha+\alpha_{V})(x+u)=\alpha(x)+\alpha_{V}(u)$ and $(\beta+\beta_{V})(x+u)=\beta(x)+\beta_{V}(u)$
and the bracket operation $[\cdot,\cdots,\cdot]_{\rho}:\wedge^n(\mathfrak{g}\oplus V)\longrightarrow\mathfrak{g}\oplus V$ is defined by
$$\begin{array}{llll}[x_1+u_1,\cdots,x_n+u_n]_{\rho}=[x_1,\cdots,x_n]_\mathfrak{g}&+&\displaystyle{\sum_{i=1}^{n-1}}(-1)^{n-i}\rho(x_1,\cdots,\widehat{x}_i,\cdots,x_{n-1},\alpha^{-1}\beta(x_n))
(\alpha_{V}\beta^{-1}_{V}(u_i))\\
&+&\rho(x_1,\cdots,x_{n-1})(u_n)
,\,\,\,\,\forall x_i\in\mathfrak{g},u_i\in V.\end{array}.$$ We denote this semidirect $n$-BiHom-Lie algebra simply by $\mathfrak{g}\ltimes V$.
\end{prop}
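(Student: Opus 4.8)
The plan is to verify directly that the quadruple $(\mathfrak{g}\oplus V,[\cdot,\cdots,\cdot]_{\rho},\alpha+\alpha_V,\beta+\beta_V)$ satisfies the four axioms of an $n$-BiHom-Lie algebra from Definition 2.1. Throughout I would exploit the direct-sum decomposition: every application of $[\cdot,\cdots,\cdot]_\rho$ splits into a $\mathfrak{g}$-component and a $V$-component, so each axiom can be checked componentwise. The $\mathfrak{g}$-components will invariably reduce to the corresponding axiom for $(\mathfrak{g},[\cdot,\cdots,\cdot]_\mathfrak{g},\alpha,\beta)$, which holds by hypothesis; hence the substance of the proof lies entirely in the $V$-components, where the representation axioms of Definition 3.1 must be invoked. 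Note also that bijectivity of $\alpha$ and $\beta_V$ is what makes the composite twists $\alpha^{-1}\beta$ and $\alpha_V\beta_V^{-1}$ appearing in the bracket well defined.

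First I would dispose of the commutativity axiom: $(\alpha+\alpha_V)\circ(\beta+\beta_V)=(\beta+\beta_V)\circ(\alpha+\alpha_V)$ splits as $\alpha\beta=\beta\alpha$ on $\mathfrak{g}$ (given) and $\alpha_V\beta_V=\beta_V\alpha_V$ on $V$, the latter being part of the representation data. Next I would check multiplicativity, namely that $\alpha+\alpha_V$ and $\beta+\beta_V$ are morphisms for $[\cdot,\cdots,\cdot]_\rho$. On the $\mathfrak{g}$-slot this is the multiplicativity of $\alpha$ and $\beta$ in $\mathfrak{g}$. On the $V$-slot it follows from the covariance relations (1) and (2), $\rho(\widetilde{\alpha}(X))\circ\alpha_V=\alpha_V\circ\rho(X)$ and $\rho(\widetilde{\beta}(X))\circ\beta_V=\beta_V\circ\rho(X)$, together with the facts that $\alpha$ commutes with $\alpha^{-1}\beta$ and $\alpha_V$ with $\alpha_V\beta_V^{-1}$, so that every twist can be pushed through the $\rho$'s and recombined correctly.

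For the BiHom-skewsymmetry I would use that $\rho$ is defined on $\wedge^{n-1}\mathfrak{g}$ and is therefore already antisymmetric in its $\mathfrak{g}$-arguments. Swapping two adjacent $\beta$-twisted entries then flips the sign of each summand of the $V$-component in step with the sign flip of the $\mathfrak{g}$-component $[\cdot]_\mathfrak{g}$. The summand $\rho(x_1,\cdots,x_{n-1})(u_n)$ and the distinguished $\alpha^{-1}\beta(x_n)$-entry require treating separately the two cases of the axiom, a swap among the first $n-1$ slots versus the interchange involving the last slot, exactly mirroring the two lines of the skewsymmetry axiom in Definition 2.1.

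The heart of the argument, and the step I expect to be the main obstacle, is the $n$-BiHom-Jacobi identity in the $V$-direction. I would substitute $x_i+a_i$ and $y_j+v_j$ into both sides of the identity for $[\cdot,\cdots,\cdot]_\rho$, and, using multilinearity, isolate the $V$-valued contribution coming from each individual representation element. These contributions arise in two ways: from the $V$-component of an outer bracket applied to a purely algebraic inner bracket, and from the $\mathfrak{g}$-component of an outer bracket that feeds an inner $V$-component. After pushing all twists through using (1) and (2) and the commutativity of the twisting maps, the resulting relations among compositions $\rho\circ\rho$ and $\rho$-of-brackets should collapse precisely onto the two remaining representation axioms (3) and (4), with axiom (3) accounting for the terms in which the representation element occupies the final argument of an inner bracket and axiom (4) for those in which it occupies an inner slot. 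The delicate point will be the bookkeeping of the signs $(-1)^{n-i}$ and the correct placement of $\alpha^{-1}\beta$ and $\alpha_V\beta_V^{-1}$, so that each group of terms appears in exactly the form written in Definition 3.1; once the terms are matched, the identity follows and the verification is complete.
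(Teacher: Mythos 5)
Your proposal follows essentially the same route as the paper's own proof: a direct componentwise verification of the four axioms, with commutativity and multiplicativity reduced to $\alpha\beta=\beta\alpha$, $\alpha_V\beta_V=\beta_V\alpha_V$ and the covariance conditions (1)--(2), BiHom-skewsymmetry reduced to the antisymmetry of $\rho$ on $\wedge^{n-1}\mathfrak{g}$ together with that of $[\cdot,\cdots,\cdot]_\mathfrak{g}$, and the $n$-BiHom-Jacobi identity expanded and matched against representation axioms (3) and (4) exactly as you describe. The plan is sound and correctly locates both the role of the bijectivity hypotheses and the place where the real work (the sign and twist bookkeeping in the Jacobi expansion) occurs, so it is a faithful blueprint of the paper's argument.
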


\begin{proof}
First we show $(\alpha+\alpha_{V})\circ(\beta+\beta_{V})=(\beta+\beta_{V})\circ(\alpha+\alpha_{V})$ from the fact $\alpha\circ\beta=\beta\circ\alpha,$ and $\alpha_{V}\circ\beta_{V}=\beta_{V}\circ\alpha_{V}$.

Now, we show that $\alpha+\beta$ is an algebra morphism. On one hand, we have
\begin{eqnarray*}
&&(\alpha+\alpha_{V})[x_1+u_1,\cdots,x_n+u_n]_{\rho}\\&=&(\alpha+\alpha_{V})([x_1,\cdots,x_n]_\mathfrak{g}+\sum_{i=1}^{n}(-1)^{n-i}\rho(x_1,\cdots,\widehat{x}_i,\cdots,x_{n-1},
\alpha^{-1}\beta(x_n))(\alpha_{V}\beta^{-1}_{V}(u_i))\\
                                             &=&\alpha([x_1,\cdots,x_n]_\mathfrak{g})+\sum_{i=1}^{n}(-1)^{n-i}\alpha_{V}\circ\rho(x_1,\cdots,\widehat{x}_i,\cdots,x_{n-1},
\alpha^{-1}\beta(x_n)))\alpha_{V}(\beta^{-1}_{V}(u_i)).
\end{eqnarray*}
On the other hand, we have
\begin{eqnarray*}
&&[(\alpha+\alpha_{V})(x_1+u_1),\cdots,(\alpha+\alpha_{V})(x_n+u_n)]_{\rho}\\&=&[\alpha(x_1)+\alpha_{V}(u_1),\cdots,\alpha(x_n)+\alpha_{V}(u_n)]_{\rho}\\
                                                               &=&[\alpha(x_1),\cdots,\alpha(x_n)]_\mathfrak{g}+\sum_{i=1}^{n}(-1)^{n-i}\rho(\alpha(x_1),\cdots,\widehat{\alpha}(x_i),
                                                               \cdots,\alpha^{-1}\beta(\alpha(x_n)))\alpha_{V}(\beta^{-1}_{V}(\alpha_{V}(u_i)))                            \end{eqnarray*}
Since $\alpha$ is an algebra morphism, $\rho$ and $\alpha_{V}$ satisfy the condition (i) in Definition \eqref{defi:rep}, it follows that $\alpha+\alpha_{V}$ is an algebra morphism with respect to the bracket $[\cdot,\cdots,\cdot]_{\rho}$.\\
Similarly, we have $\beta+\beta_{V}$ is an algebra morphism with respect to the bracket $[\cdot,\cdots,\cdot]_{\rho}$.

Next we show that the bracket $[\cdot,\cdots,\cdot]_{\rho}$ satisfies BiHom skewsymmetry, $\forall x_i\in\mathfrak{g},u_i\in V$.
$$\begin{array}{llll}&&[(\beta+\beta_{V})(x_1+u_1),\cdots,(\beta+\beta_{V})(x_{i}+u_{i}),(\beta+\beta_{V})(x_{i+1}+u_{i+1}),\cdots,(\alpha+\alpha_{V})(x_n+u_n)]_{\rho}\\[0.2cm]
&=&[\beta(x_{1})+\beta_{V}(u_{1}),\cdots,\beta(x_{i})+\beta_{V}(u_{i}),\beta(x_{i+1})+\beta_{V}(u_{i+1}),\cdots,\alpha(x_n)+\alpha_{V}(u_n)]_{\rho}
\\&=&[\beta(x_1),\cdots,\beta(x_i),\beta(x_{i+1}),\cdots,\alpha(x_n)]_\mathfrak{g}\\&+&\displaystyle{\sum_{k=1}^{n-1}}(-1)^{n-k}\rho(\beta(x_1),\cdots,\widehat{x}_k,\cdots,\beta(x_{i}),
\beta(x_{i+1}),\beta(x_{n-1}),\alpha^{-1}\beta(\alpha(x_n)))
(\alpha_{V}\beta^{-1}_{V}(\beta_{V}(u_k)))\\[0.2cm]
&+&\rho(\beta(x_1),\cdots,\beta(x_{i}),\beta(x_{i+1}),\cdots,\beta(x_{n-1}))(\alpha_{V}(u_n))
\\&=&-[\beta(x_1),\cdots,\beta(x_{i+1}),\beta(x_i),\cdots,\alpha(x_n)]_\mathfrak{g}\\&-&\displaystyle{\sum_{k=1}^{n-1}}(-1)^{n-k}
\rho(\beta(x_1),\cdots,\widehat{x}_k,\cdots,\beta(x_{i+1}),\beta(x_{i}),\beta(x_{n-1}),\alpha^{-1}\beta(\alpha(x_n)))
(\alpha_{V}\beta^{-1}_{V}(\beta_{V}(u_k)))\\[0.2cm]
&-&\rho(\beta(x_1),\cdots,\beta(x_{i+1}),\beta(x_{i}),\cdots,\beta(x_{n-1}))(\alpha_{V}(u_n))\\[0.2cm]&=&-[\beta(x_{1})+\beta_{V}(u_{1}),\cdots,\beta(x_{i+1})+\beta_{V}(u_{i+1}),
\beta(x_{i})+\beta_{V}(u_{i}),\cdots,\alpha(x_n)+\alpha_{V}(u_n)]_{\rho}\\[0.2cm]&=&-[(\beta+\beta_{V})(x_1+u_1),\cdots,(\beta+\beta_{V})(x_{i+1}+u_{i+1}),(\beta+\beta_{V})(x_{i}+u_{i}),
\cdots,(\alpha+\alpha_{V})(x_n+u_n)]_{\rho}.
\end{array}$$
In the same way, we also have $$\begin{array}{lllll}&&[(\beta+\beta_{V})(x_1+u_1),\cdots,(\beta+\beta_{V})(x_{n-1}+u_{n-1}),(\alpha+\alpha_{V})(x_n+u_n)]_{\rho}\\[0.2cm]
&=&-[(\beta+\beta_{V})(x_1+u_1),\cdots,(\beta+\beta_{V})(x_{n}+u_{n}),(\alpha+\alpha_{V})(x_{n-1}+u_{n-1})]_{\rho}\end{array}$$

Finally, for all $x_{i},y_{i}\in\mathfrak{g},~u_{i},v_{i}\in V$ we have
$$\begin{array}{llllll}&&[(\beta+\beta_{V})^{2}(x_{1}+u_{1}),\cdots,(\beta+\beta_{V})^{2}(x_{n-1}+u_{n-1}),[(\beta+\beta_{V})(y_{1}+v_{1}),\cdots,(\alpha+\alpha_{V})(y_{n}+v_{n})]_{\rho}]_{\rho}\\[0.2cm]
&=&[\beta^{2}(x_{1})+\beta_{V}^{2}(u_{1}),\cdots,\beta^{2}(x_{n-1})+\beta_{V}^{2}(u_{n-1}),[\beta(y_{1})+\beta_{V}(v_{1}),\cdots,\alpha(y_{n})+\alpha_{V}(v_{n})]_{\rho}]_{\rho}\\[0.2cm]
&=&[\beta^{2}(x_{1})+\beta_{V}^{2}(u_{1}),\cdots,\beta^{2}(x_{n-1})+\beta_{V}^{2}(u_{n-1}),[\beta(y_1),\cdots,\alpha(y_n)]_\mathfrak{g}\\[0.2cm]&+&\displaystyle{\sum_{i=1}^{n-1}}(-1)^{n-i}\rho(\beta(y_1),\cdots,\widehat{\beta(y_{i})},\cdots,\alpha(y_{n-1}),\alpha^{-1}\beta(\alpha(y_n)))
(\alpha_{V}\beta^{-1}_{V}(\beta_{V}(v_i)))\\[0.2cm]
&+&\rho(\beta(y_1),\cdots,\beta(y_{n-1}))(\alpha(v_n))]_{\rho}\\[0.2cm]
&=&[\beta^{2}(x_{1}),\cdots,\beta^{2}(x_{n-1}),[\beta(y_1),\cdots,\alpha(y_n)]_\mathfrak{g}]_\mathfrak{g}\\[0.2cm]
&+&\displaystyle{\sum_{i=1}^{n-1}}(-1)^{n-i}\rho(\beta^{2}(x_{1}),\cdots,\widehat{\beta^{2}(x_{i})},\cdots,\beta^{2}(x_{n-1}),\alpha^{-1}\beta([\beta(y_1),\cdots,\alpha(y_n)]
_\mathfrak{g})(\alpha_{V}\beta_{V}(u_{i})\\[0.2cm]
&+&\rho(\beta^{2}(x_{1}),\cdots,\beta^{2}(x_{n-1})(\displaystyle{\sum_{i=1}^{n-1}}(-1)^{n-i}\rho(\beta(y_1),\cdots,\widehat{\beta(y_{i})},\cdots,\beta(y_{n-1}),\alpha^{-1}\beta(\alpha(y_n)))
(\alpha_{V}(v_i)))\\[0.2cm]
&+&\rho(\beta(y_1),\cdots,\beta(y_{n-1}))(\alpha_{V}(v_n)))\\[0.2cm]
&=&\displaystyle{\sum_{i=1}^{n}} (-1)^{n-i}[\beta^{2}(y_{1}),\ldots,\beta^{2}(y_{i},\beta^{2}(y_{n}),[\beta(x_{1}),\ldots, \beta(x_{n-1}),\alpha(y_{i})]_\mathfrak{g}]_\mathfrak{g}\\[0.2cm]
&+&\rho(\beta(y_1),\cdots,\beta(y_{n-1}))(\alpha_{V}(v_n))\\[0.2cm]
&+&\displaystyle{\sum_{i=1}^{n}}(-1)^{n-i}\rho(\beta^{2}(y_1),\cdots,\widehat{\beta^{2}(y_{i})},\cdots,\beta^{2}(y_{n-1}),\beta^{2}(y_n))
\rho(\beta(x_{1}),\cdots,\beta(x_{n-1})(\alpha_{V}((v_i))\\
&+&\displaystyle{\sum_{i=1}^{n-1}}(-1)^{n-i}\rho(\beta^{2}(x_{1}),\cdots,\widehat{\beta^{2}(x_{i})},\cdots,\beta^{2}(x_{n-1}),\alpha^{-1}\beta([\beta(y_1),\cdots,\alpha(y_n)]_\mathfrak{g})(\alpha_{V}\beta_{V}(u_{i}))\\[0.2cm]
&=&\displaystyle{\sum_{k=1}^{n}}(-1)^{n-k}[\beta^{2}(y_{1}),\cdots,\widehat{y_{k}},\cdots,\beta^{2}(y_{n}),[\beta(x_1),\cdots,\beta(x_{n-1}),\alpha(y_{k})]_\mathfrak{g}]_\mathfrak{g}\\[0.2cm]
&+&\displaystyle{\sum_{k=1}^{n}}(-1)^{n-k}\Big(\displaystyle{\sum_{i=1}^{n-1}}(-1)^{n-i}\rho(\beta^{2}(y_{1}),\cdots,\widehat{y_{k}},\cdots,\widehat{\beta^{2}(y_{i})},\cdots,\beta^{2}(y_{n}),\alpha^{-1}\beta([\beta(x_1),\cdots,\\[0.2cm]&&\beta(x_{n-1}),\alpha(y_k)]_\mathfrak{g})(\alpha_{V}\beta_{V}(v_{i}))\Big)\\
&+&\displaystyle{\sum_{k=1}^{n}}(-1)^{n-k}\Big(\rho(\beta^{2}(y_{1}),\cdots,\beta^{2}(y_{k}),\cdots,\beta^{2}(y_{n})(\displaystyle{\sum_{i=1}^{n-1}}(-1)^{n-i}\rho(\beta(x_1),\cdots,\widehat{\beta(x_{i})},\cdots,\\[0.2cm]
&+&\beta(x_{n-1}),\alpha^{-1}\beta(\alpha(y_k)))
(\alpha_{V}(u_i)))\rho(\beta(x_1),\cdots,\beta(x_{n-1}))(\alpha_{V}(v_k)))\Big)\\[0.2cm]
&=&\displaystyle{\sum_{k=1}^{n}}(-1)^{n-k}\Big\{[\beta^{2}(y_{1}),\cdots,\widehat{\beta^{2}(y_{k})},\cdots,\beta^{2}(y_{n}),[\beta(x_1),\beta(x_{n-1}),\cdots,\alpha(y_k)]_\mathfrak{g}]_\mathfrak{g}\\[0.2cm]
&+&\displaystyle{\sum_{i=1}^{n-1}}(-1)^{n-i}\rho(\beta^{2}(y_{1}),\cdots,\widehat{\beta^{2}(y_{k})},\cdots,\widehat{\beta^{2}(y_{i})},\cdots,\beta^{2}(y_{n}),\alpha^{-1}\beta
([\beta(x_1),\cdots,\beta(x_{n-1}),\alpha(y_k)]
_\mathfrak{g})(\alpha_{V}\beta_{V}(v_{i})\\[0.2cm]
&+&\rho(\beta^{2}(x_{1}),\cdots,\widehat{\beta^{2}(x_{k}},\cdots,\beta^{2}(y_{k})(\displaystyle{\sum_{i=1}^{n-1}}(-1)^{n-i}\rho(\beta(x_1),\cdots,\widehat{\beta(x_{i})},\cdots,\beta(x_{n-1}),\alpha^{-1}\beta(\alpha(y_k)))
(\alpha_{V}(u_i)))\\[0.2cm]
&+&\rho(\beta(x_1),\cdots,\beta(x_{n-1}))(\alpha_{V}(v_k)))\Big\}\\[0.2cm]
&=&\displaystyle{\sum_{k=1}^{n}}(-1)^{n-k}[(\beta+\beta_{V})^{2}(y_{1}+v_{1}),\cdots,(\beta+\beta_{V})^{2}(y_{k-1}+v_{k-1}),(\beta+\beta_{V})^{2}(y_{k+1}+v_{k+1}),\cdots\\&&(\beta+\beta_{V})^{2}(y_{n}+v_{n}),[(\beta+\beta_{V})(x_{1}+u_{1}),\cdots,(\beta+\beta_{V})(x_{n-1}+u_{n-1}),(\alpha+\alpha_{V})(y_{k}+v_{k})]_{\rho}]_{\rho}
\end{array}$$
Then $\mathfrak{g}\ltimes V:=(\mathfrak{g}\oplus V,[\cdot,\cdots,\cdot]_{\rho},\alpha+\alpha_{V},\beta+\beta_{V})$ is an $n$-BiHom-Lie algebra.
\end{proof}
\begin{defn}
  Let $(V_1;\rho_1,\alpha_1,\beta_1)$ and $(V_2;\rho_2,\alpha_2,\beta_2)$ be two representations of an $n$-BiHom-Lie algebra $(\mathfrak{g},[\cdot,\cdots,\cdot]_\mathfrak{g},\alpha,\beta)$. They are said to be {\bf equivalent} if there exists an isomorphism of vector spaces $T:V_1\longrightarrow V_2$ such that
  $$
  T\rho_1(X)(u)=\rho_2(X)(Tu),\quad T\circ \alpha_1=\alpha_2\circ T,\quad T\circ \beta_1=\beta_2\circ T,\quad\forall X=x_1\wedge\cdots\wedge x_{n-1},~u\in V_1.
  $$
  In terms of diagrams, we have
  $$
\xymatrix{
 \wedge^{n-1}\mathfrak{g}\times V_1 \ar[d]_{ id\times T }\ar[rr]^{\rho_1}
                && V_1  \ar[d]^{T}  \\
 \wedge^{n-1}\mathfrak{g}\times V_2 \ar[rr]^{\rho_2}
                && V_2  },\quad \xymatrix{
 V_1 \ar[d]_{ T }\ar[rr]^{\alpha_1}
                && V_1  \ar[d]^{T}  \\
 V_2 \ar[rr]^{\alpha_2}
                && V_2  },\quad \xymatrix{
 V_1 \ar[d]_{ T }\ar[rr]^{\beta_1}
                && V_1  \ar[d]^{T}  \\
 V_2 \ar[rr]^{\beta_2}
                && V_2.  }
$$
\end{defn}
\begin{thm}
Let $(\mathfrak{g},[\cdot,\cdots,\cdot]_\mathfrak{g})$ be an $n$-Lie algebra and $(V,\rho)$ be a representation of $\mathfrak{g}$. Let $\alpha,\beta:\mathfrak{g}\rightarrow\mathfrak{g}$ be two endomorphisms of $\mathfrak{g}$ and any two of the maps $\alpha,\beta$ commute, and let $\alpha_{V},\beta_{V}:V\rightarrow V$ be two linear maps of $V$ and any two of the maps $\alpha_{V},\beta_{V}$ commute such that $\alpha_{V}\circ\rho(X)=\rho(\widetilde{\alpha}(X))\circ \alpha_{V}$ and $\beta_{V}\circ\rho(X)=\rho(\widetilde{\beta}(X))\circ \beta_{V}$. Then $(V,\widetilde{\rho}:=\beta_{V}\circ\rho,\alpha_{V},\beta_{V})$ is a representation of the $n$-BiHom-Lie algebra $(\mathfrak{g},[\cdot,\cdots,\cdot]_{\alpha,\beta},[\cdot,\cdots,\cdot]_\mathfrak{g}\circ(\alpha\otimes\cdots\otimes\alpha\otimes\beta),\alpha,\beta)$.
\end{thm}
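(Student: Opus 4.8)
The plan is to verify conditions (i)--(iv) of Definition \ref{defi:rep} for the twisted data $(V,\widetilde\rho,\alpha_V,\beta_V)$ over the $n$-BiHom-Lie algebra $(\mathfrak{g},[\cdot,\cdots,\cdot]_{\alpha,\beta},\alpha,\beta)$, which is an $n$-BiHom-Lie algebra by Theorem \ref{YauTwist}, so the statement is well posed. Throughout I use three standing facts: that $\alpha,\beta$ are algebra morphisms with $\alpha\circ\beta=\beta\circ\alpha$ (so $\widetilde\beta\circ\widetilde\alpha=\widetilde{\alpha\beta}$ and $\beta([\alpha(x_1),\dots,\alpha(x_{n-1}),y]_{\mathfrak{g}})=[\alpha\beta(x_1),\dots,\alpha\beta(x_{n-1}),\beta(y)]_{\mathfrak{g}}$); that $\alpha_V,\beta_V$ commute; and the $\alpha_V$- and $\beta_V$-intertwining relations $\alpha_V\circ\rho(X)=\rho(\widetilde\alpha(X))\circ\alpha_V$ and $\beta_V\circ\rho(X)=\rho(\widetilde\beta(X))\circ\beta_V$. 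Finally, since $(V,\rho)$ is a representation of the $n$-Lie algebra $\mathfrak{g}$, the map $\rho$ satisfies the untwisted versions of conditions (iii) and (iv) (Definition \ref{defi:rep} with $\alpha=\beta=\mathrm{id}$, $\alpha_V=\beta_V=\mathrm{id}$).

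First I would dispatch conditions (i) and (ii), which are immediate. For (i), the $\alpha_V$-intertwining relation gives $\rho(\widetilde\alpha(X))\circ\alpha_V=\alpha_V\circ\rho(X)$, so using commutativity of $\alpha_V,\beta_V$,
\[
\widetilde\rho(\widetilde\alpha(X))\circ\alpha_V=\beta_V\circ\rho(\widetilde\alpha(X))\circ\alpha_V=\beta_V\circ\alpha_V\circ\rho(X)=\alpha_V\circ\beta_V\circ\rho(X)=\alpha_V\circ\widetilde\rho(X);
\]
condition (ii) follows the same way from the $\beta_V$-intertwining relation (here no appeal to commutativity is needed).

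The heart of the proof is conditions (iii) and (iv), and the single mechanism driving both is: write $\widetilde\rho=\beta_V\circ\rho$, push every $\beta_V$ to the left through the terms $\rho(\widetilde\beta(-))$ via the $\beta_V$-intertwining relation, and collect the twists into a common factor $\beta_V^{2}$; what remains is exactly an untwisted representation identity for $\rho$ evaluated on the $\alpha$-twisted tuple $\widetilde\alpha(X)$ together with $Y$. Concretely, for (iii) the left-hand side collapses, after applying the $\beta_V$-intertwining relation to each term, to
\[
\beta_V^{2}\circ\bigl(\rho(\widetilde\alpha(X))\circ\rho(Y)-\rho(Y)\circ\rho(\widetilde\alpha(X))\bigr),
\]
while on the right-hand side the inner bracket $[\beta(x_1),\dots,\beta(x_{n-1}),y_i]_{\alpha,\beta}$ equals $\beta([\alpha(x_1),\dots,\alpha(x_{n-1}),y_i]_{\mathfrak{g}})$ by the morphism property, so each summand is $\rho$ evaluated on $\widetilde\beta$ of a tuple; the $\beta_V$-intertwining relation then extracts $\beta_V^{2}$ and leaves $\rho$ applied to $y_1,\dots,[\alpha(x_1),\dots,\alpha(x_{n-1}),y_i]_{\mathfrak{g}},\dots,y_{n-1}$. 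Equality of the two sides is now precisely the untwisted condition (iii) for $\rho$ with $X$ replaced by $\widetilde\alpha(X)$.

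Condition (iv) is handled identically: after factoring $\beta_V^{2}$ out of both sides by the $\beta_V$-intertwining relation (rewriting $[\beta(x_1),\dots,y_1]_{\alpha,\beta}$ and the arguments $\alpha\beta(x_j)$ as $\beta$ applied to $\alpha$-twisted data), the identity reduces to the untwisted fundamental-identity form of condition (iv) for $\rho$ evaluated on $\widetilde\alpha(X)$ and $Y$, which holds by hypothesis. The main obstacle is therefore not conceptual but bookkeeping: one must track the four commuting twists, invoke the morphism property of $\beta$ at exactly the right places to turn the $\alpha\beta$-twisted bracket inside $\rho$ into $\beta$ of an $\alpha$-twisted $\mathfrak{g}$-bracket, and check that the signs $(-1)^{n-i}$ and the index ranges match slot-by-slot so that the reduction to the untwisted identities is exact. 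It is essential that $\alpha,\beta$ be genuine algebra morphisms (not merely linear maps), since this is what licenses pulling $\beta$ through the bracket; without it the $\beta_V^{2}$ factorisation would fail.
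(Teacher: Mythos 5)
Your proposal is correct and follows essentially the same route as the paper's proof: conditions (i) and (ii) are dispatched exactly as you do, and for (iii) and (iv) the paper likewise unfolds $\widetilde\rho=\beta_V\circ\rho$, rewrites $[\beta(x_1),\dots,\beta(x_{n-1}),y_i]_{\alpha\beta}$ as $\beta$ applied to an $\alpha$-twisted $\mathfrak{g}$-bracket via the morphism property, pushes each $\beta_V$ through with the intertwining relation to extract a common factor $\beta_V^{2}$, and concludes from the untwisted representation identities for $\rho$ evaluated at $(\widetilde\alpha(X),Y)$. No gap: your intermediate expressions coincide with the paper's displayed reductions.
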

\begin{proof}
For all $X=x_1\wedge\cdots\wedge x_{n-1},~Y=y_1\wedge\cdots\wedge y_{n-1}\in\wedge^{n-1}\mathfrak{g}$, we have
\begin{enumerate}
\item
\begin{eqnarray*}
\widetilde{\rho}(\widetilde{\alpha}(X) )\circ\alpha_{V}&=&\beta_{V}\circ\rho(\widetilde{\alpha}(X) )\circ\alpha_{V}\\
&=&\beta_{V}\alpha_{V}\circ \rho(X)\\&=&\alpha_{V}\beta_{V}\circ \rho(X)\\
&=&\alpha_{V}\circ \widetilde{\rho}(X)\end{eqnarray*}
\item
\begin{eqnarray*}
\widetilde{\rho}(\widetilde{\beta}(X) )\circ\beta_{V}&=&\beta_{V}\circ\rho(\widetilde{\beta}(X) )\circ\beta_{V}\\
&=&\beta_{V}^{2}\circ \rho(X)\\
&=&\beta_{V}\circ \widetilde{\rho}(X)\end{eqnarray*}
\item
\begin{eqnarray*} &&\widetilde{\rho}(\widetilde{\alpha\beta}(X))\circ\widetilde{\rho}(Y)-\widetilde{\rho}(\widetilde{\beta}(Y))\circ \widetilde{\rho}(\widetilde{\alpha}(X))\\&-&\sum_{i=1}^{n-1}\widetilde{\rho}(\beta(y_1),\cdots,\beta(y_{i-1}),[\beta(x_1),\cdots,\beta(x_{n-1}),y_i]_{\alpha\beta},\beta(y_{i+1})
 ,\cdots,\beta(y_{n-1}))\circ \beta_{V}\\
&=&\beta_{V}\rho(\widetilde{\alpha\beta}(X))\circ\beta_{V}\rho(Y)-\beta_{V}\rho(\widetilde{\beta}(Y))\circ \beta_{V}\rho(\widetilde{\alpha}(X))\\&-&\sum_{i=1}^{n-1}\beta_{V}\rho(\beta(y_1),\cdots,\beta(y_{i-1}),[\alpha\beta(x_1),\cdots,\alpha\beta(x_{n-1}),\beta(y_i)]_{\mathfrak{g}},\beta(y_{i+1})
 ,\cdots,\beta(y_{n-1}))\circ \beta_{V}\\
&=&\beta_{V}^{2}\rho(\widetilde{\alpha}(X))\circ\rho(Y)-\beta_{V}^{2}\rho(Y)\circ \rho(\widetilde{\alpha}(X))\\&-&\sum_{i=1}^{n-1}\beta_{V}^{2}\rho(y_1,\cdots,y_{i-1},[\alpha(x_1),\cdots,\alpha(x_{n-1}),y_i]_{\mathfrak{g}},y_{i+1}
 ,\cdots,y_{n-1})\\
&=&\beta_{V}^{2}\Big(\rho(\widetilde{\alpha}(X))\circ\rho(Y)-\rho(Y)\circ \rho(\widetilde{\alpha}(X))\\&-&\sum_{i=1}^{n-1}\rho(y_1,\cdots,y_{i-1},[\alpha(x_1),\cdots,\alpha(x_{n-1}),y_i]_{\mathfrak{g}},y_{i+1}
 ,\cdots,y_{n-1})\Big)=0
\end{eqnarray*}
 \item\begin{eqnarray*}
        &&\widetilde{\rho}(\beta(y_2),\cdots,\beta(y_{n-1}),[\beta(x_1),\cdots,\beta(x_{n-1}),y_1]_{\alpha\beta})\circ\beta_{V}\\
        &-&\sum_{i=1}^{n-1}(-1)^{n-i}\widetilde{\rho}(\alpha\beta(x_1),\cdots,\widehat{\alpha\beta}(x_i),\cdots,\alpha\beta(x_{n-1}),\beta(y_1))\circ
\widetilde{\rho}(y_2,\cdots,y_{n-1},\alpha(x_i))\\
        &-&\widetilde{\rho}(\widetilde{\alpha\beta}(X))\circ\widetilde{\rho}(Y)\\
&=&\beta_{V}\rho(\beta(y_2),\cdots,\beta(y_{n-1}),[\alpha\beta(x_1),\cdots,\alpha\beta(x_{n-1}),\beta(y_1)]_{\mathfrak{g}})\circ\beta_{V}\\
        &-&\sum_{i=1}^{n-1}(-1)^{n-i}\beta_{V}\rho(\alpha\beta(x_1),\cdots,\widehat{\alpha\beta}(x_i),\cdots,\alpha\beta(x_{n-1}),\beta(y_1))\circ
\beta_{V}\rho(y_2,\cdots,y_{n-1},\alpha(x_i))\\
        &-&\beta_{V}\rho(\widetilde{\alpha\beta}(X))\circ\beta_{V}\rho(Y)\\
&=&\beta_{V}^{2}\rho(y_2,\cdots,y_{n-1},[\alpha(x_1),\cdots,\alpha(x_{n-1}),y_1]_{\mathfrak{g}})\\
        &-&\sum_{i=1}^{n-1}(-1)^{n-i}\beta_{V}^{2}\rho(\alpha(x_1),\cdots,\widehat{\alpha}(x_i),\cdots,\alpha(x_{n-1}),y_1)\rho(y_2,\cdots,y_{n-1},\alpha(x_i))\\
        &-&\beta_{V}^{2}\rho(\widetilde{\alpha}(X))\circ\rho(Y)\\
&=&\beta_{V}^{2}\Big(\rho(y_2,\cdots,y_{n-1},[\alpha(x_1),\cdots,\alpha(x_{n-1}),y_1]_{\mathfrak{g}})\\
        &-&\sum_{i=1}^{n-1}(-1)^{n-i}\rho(\alpha(x_1),\cdots,\widehat{\alpha}(x_i),\cdots,\alpha(x_{n-1}),y_1)\rho(y_2,\cdots,y_{n-1},\alpha(x_i))\\
        &-&\rho(\widetilde{\alpha}(X))\circ\rho(Y)\Big)=0.
        \end{eqnarray*}

\end{enumerate}
\end{proof}
\begin{ex}
Let $\mathfrak{g}$ be the $3$-dimensional $3$-Lie algebra defined with respect to a  basis $\{e_1,e_2,e_3\}$ by the skew-symmetric bracket   $[e_1,e_2,e_3]_{\mathfrak{g}}= e_1$.
 Let $V$ be a $2$-dimensional vector space and $\{v_1,v_2\}$ its basis. We have a representation defined by  the map $\rho$, given with respect to previous bases by
\begin{eqnarray*}    &\rho(e_1,e_2)(v_1)=0,  \;
\rho(e_1,e_2)(v_2)= v_1 ,   \;
 \rho(e_1,e_3)(v_1)=0, \\&
  \rho(e_1,e_3)(v_2)=0, \;
 \rho(e_2,e_3)(v_1)= v_1,    \;  \rho(e_{2},e_3)(v_2)=0,
\end{eqnarray*}
Let $\alpha,\beta: \mathfrak{g}\longrightarrow \mathfrak{g}$ be  a algebra morphism and $\alpha_V,\beta_V \in gl (V)$ defined respectively by:
\begin{align*}
&\alpha(e_1)=- e_1, \alpha(e_2)=e_2, \alpha(e_3)=e_3,\\
&\beta(e_1)=-e_1, \beta(e_2)=-e_2, \beta(e_3)=e_3,\\
& \alpha_V(v_1)=- v_1, \alpha_V(v_2)=v_2,\\& \beta_V(v_1)=v_1, \beta_V(v_2)=-v_2,
\end{align*}
they satisfy
\begin{align*}
&\alpha\beta=\beta\alpha,~~~~\alpha_V\beta_V=\beta_V\alpha_V,\\
& \alpha_{V}\circ \rho(x_{1},x_{2})= \rho(\alpha(x_{1}),\alpha(x_{2}))\circ \alpha_V,\\
& \beta_{V}\circ \rho(x_{1},x_{2})= \rho(\beta(x_{1}),\beta(x_{2}))\circ \beta_V.
 \end{align*}
where $x_1,\cdots,x_{n-1}$ are in $\mathfrak{g}$.

Then, using the Twist procedure, $(V;\widetilde{\rho}=\beta_{V}\circ\rho,\alpha_V,\beta_V)$ is a representation of the $n$-BiHom-Lie algebra $(\mathfrak{g}, [\cdot,\cdots,\cdot]_{\alpha,\beta},\alpha,\beta)$. More precisely, we have
\begin{eqnarray*}
  [e_1,e_2,e_3]_{\alpha,\beta}&=&[\alpha(e_1),\alpha(e_2),\beta(e_3)]_\mathfrak{g}=- e_1,
  \end{eqnarray*}
  \begin{eqnarray*}
 &  \widetilde{\rho}(e_1,e_2)(v_1)=0,  \;
\widetilde{\rho}(e_1,e_2)(v_2)=v_1 ,   \;
 \widetilde{\rho}(e_1,e_3)(v_1)=0, \\
&  \widetilde{\rho}(e_1,e_3)(v_2)=0, \;
 \widetilde{\rho}(e_2,e_3)(v_1)=v_1,    \;  \widetilde{\rho}(e_2,e_3)(v_2)=0.
\end{eqnarray*}
\end{ex}
\begin{defn}
Let $(\mathfrak{g},[\cdot,\cdots,\cdot]_{\mathfrak{g}},\alpha,\beta)$ be an $n$-BiHom-Lie algebra. A bilinear form  $f$ on $\mathfrak{g}$ is said to be nondegenerate if
$$\mathfrak{g}^\perp=\{x\in \mathfrak{g}~|~f(x,y)=0, ~\forall\, y\in \mathfrak{g}\}=0;$$
$\alpha\beta$-invariant if for all $x_1,\cdots,x_{n+1}\in \mathfrak{g}$,
$$f([\beta(x_1),\cdots,\beta(x_{n-1}),\alpha(x_n)]_{\mathfrak{g}},\alpha(x_{n+1}))=-f(\alpha(x_{n}),[\beta(x_1),\cdots,\beta(x_{n-1}),\alpha(x_{n+1})]_{\mathfrak{g}});$$
symmetric if
$$f(x,y)=f(y,x).$$
$\alpha$ is called $f$-symmetric, if $f(\alpha(x),y)=f(x,\alpha(y))$.\\
A subspace $I$ of $\mathfrak{g}$ is called isotropic if $I\subseteq I^\bot$.
\end{defn}

\begin{defn}
Let $(\mathfrak{g},[\cdot,\cdots,\cdot]_{\mathfrak{g}},\alpha,\beta)$ be an $n$-BiHom-Lie algebra over a field $\mathbb{K}$. If $\mathfrak{g}$ admits a nondegenerate, $\alpha\beta$-invariant and symmetric bilinear form $f$ such that $\alpha$, $\beta$ are $f$-symmetric, then we call $(\mathfrak{g},f,\alpha,\beta)$ a quadratic $n$-BiHom-Lie algebra.

Let $(\mathfrak{g}^{'},[\cdot,\cdots,\cdot]_{\mathfrak{g}'},\alpha',\beta')$ be another $n$-BiHom-Lie algebra. Two quadratic $n$-BiHom-Lie algebras $(\mathfrak{g},f,\alpha,\beta)$ and
$(\mathfrak{g}^{'},f',\alpha',\beta')$ are said to be isometric if there exists a algebra isomorphism $\phi: \mathfrak{g}\rightarrow \mathfrak{g}^{'}$ such that
$f(x, y)=f'(\phi(x), \phi(y)),~ \forall\, x, y\in \mathfrak{g}$.
\end{defn}

\begin{thm}
Let $(\mathfrak{g},[\cdot,\cdots,\cdot]_{\mathfrak{g}},\alpha,\beta)$ be an $n$-BiHom-Lie algebra and $(V, \rho, \alpha_{V}, \beta_V)$ be a representation of $\mathfrak{g}$. Let us consider $V^*$ the dual space of $V$ and $\tilde\alpha_{V},\tilde\beta_V:V^*\rightarrow V^*$ two Homomorphisms defined by $
\tilde\alpha_{V}(f)=f\circ\alpha_{V},\tilde\beta_V(f)=f\circ\beta_V,~ \forall\, f\in V^*$. Then the skewsymmetry linear map $\tilde\rho:\wedge^{n-1}\mathfrak{g}\rightarrow\mathrm{End}(V^{*})$, defined by $\tilde\rho(x_{1},\cdots,x_{n-1})(f)=- f\circ \rho(x_{1},\cdots,x_{n-1}),~ \forall\, f \in V^*, x_{1},\cdots,x_{n-1} \in \mathfrak{g},$ is a representation of $\mathfrak{g}$ on $(V^{*},\tilde\rho,\tilde{\alpha}_V,\tilde\beta_V)$ if and only if for every $X=x_1\wedge\cdots\wedge x_{n-1},~Y=y_1\wedge\cdots\wedge y_{n-1}\in\wedge^{n-1}\mathfrak{g}$,
\begin{enumerate}
\item
$\alpha_{V}\circ \rho(\widetilde{\alpha}(X))=\rho(X)\circ \alpha_{V},$
\item $\beta_V\circ \rho(\widetilde{\beta}(X))=\rho(X)\circ \beta_V,$

  \item \begin{eqnarray*}&&\rho(Y)\circ\rho(\widetilde{\alpha\beta}(X))-\rho(\widetilde{\alpha}(X))\circ\rho(\widetilde{\beta}(Y))\\ &=&-\sum_{i=1}^{n-1}\beta_{V}\rho(\beta(y_1),\cdots,\beta(y_{i-1}),[\beta(x_1),\cdots,\beta(x_{n-1}),y_i]_\mathfrak{g},\beta(y_{i+1})
 ,\cdots,\beta(y_{n-1})));\end{eqnarray*}

    \item\begin{eqnarray*}
        &&\beta_{V}\rho([\beta(x_1),\cdots,\beta(x_{n-1}),y_1]_\mathfrak{g},\beta(y_2),\cdots,\beta(y_{n-1}),)\\
       &=&-\sum_{i=1}^{n-1}(-1)^{n-i}\rho(y_2,\cdots,y_{n-1},\alpha(x_i))\circ\rho(\alpha\beta(x_1),\cdots,\widehat{\alpha\beta}(x_i),\cdots,\alpha\beta(x_{n-1}),\beta(y_1))\\
        &-&\rho(Y)\circ\rho(\widetilde{\alpha\beta}(X)).
        \end{eqnarray*}
\end{enumerate}
\end{thm}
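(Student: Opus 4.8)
The plan is to test the four axioms of Definition \ref{defi:rep} directly on the candidate quadruple $(V^{*},\tilde\rho,\tilde\alpha_V,\tilde\beta_V)$ and to translate each of them, one at a time, into the corresponding identity on $\mathrm{End}(V)$. The only tool needed is the transpose (dual) operation: for $A\in\mathrm{End}(V)$ write $A^{t}\in\mathrm{End}(V^{*})$ for the map $A^{t}(f)=f\circ A$. This operation is a linear bijection satisfying $(A\circ B)^{t}=B^{t}\circ A^{t}$ and $0^{t}=0$. By the very definitions in the statement we have $\tilde\alpha_V=\alpha_V^{t}$, $\tilde\beta_V=\beta_V^{t}$ and, crucially, $\tilde\rho(X)=-\,\rho(X)^{t}$ for every $X\in\wedge^{n-1}\mathfrak{g}$. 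Because $A\mapsto A^{t}$ is injective, each axiom written for $\tilde\rho$ is \emph{equivalent} to the identity obtained by stripping off the transpose, which is exactly why the final statement is an ``if and only if'' rather than a one-way implication.

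First I would dispose of axioms (i) and (ii), which are linear in $\rho$. Substituting $\tilde\rho(\widetilde{\alpha}(X))=-\rho(\widetilde{\alpha}(X))^{t}$ and $\tilde\alpha_V=\alpha_V^{t}$ into axiom (i) and using $(A\circ B)^{t}=B^{t}\circ A^{t}$, both sides become transposes of maps on $V$; the minus signs match on both sides and the transpose reverses the order of composition, so the axiom collapses to $\alpha_V\circ\rho(\widetilde{\alpha}(X))=\rho(X)\circ\alpha_V$, i.e. condition (1). Axiom (ii) is handled identically and yields condition (2). Here the order-reversal of $(\cdot)^{t}$ is precisely what sends ``$\rho(\widetilde{\alpha}(X))\circ\alpha_V=\alpha_V\circ\rho(X)$'' to the swapped form appearing in the statement.

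Next come axioms (iii) and (iv), which are the substantive part because they are quadratic in $\rho$. In any product $\tilde\rho(\,\cdot\,)\circ\tilde\rho(\,\cdot\,)$ the two defining minus signs multiply to $+1$, while $(A\circ B)^{t}=B^{t}\circ A^{t}$ interchanges the two factors; thus each such product becomes the transpose of the product of the two $\rho$'s taken in the opposite order. By contrast, a term of the shape $\tilde\rho(\,\cdot\,)\circ\tilde\beta_V$ keeps a single surviving minus sign. This bookkeeping explains, term by term, the overall minus signs decorating the right-hand sides of conditions (3) and (4): for axiom (iii) the left-hand side becomes $\bigl(\rho(Y)\circ\rho(\widetilde{\alpha\beta}(X))-\rho(\widetilde{\alpha}(X))\circ\rho(\widetilde{\beta}(Y))\bigr)^{t}$ and the right-hand side becomes $-\bigl(\sum_i\beta_V\circ\rho(\cdots)\bigr)^{t}$, and cancelling the transpose gives condition (3) verbatim.

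The one place demanding genuine care — and the step I expect to be the main obstacle — is axiom (iv). After the transpose manipulation it produces $\beta_V\circ\rho(\beta(y_2),\cdots,\beta(y_{n-1}),[\beta(x_1),\cdots,\beta(x_{n-1}),y_1]_{\mathfrak{g}})$ on the left, whereas condition (4) displays the bracket in the \emph{first} slot, $\rho([\beta(x_1),\cdots,\beta(x_{n-1}),y_1]_{\mathfrak{g}},\beta(y_2),\cdots,\beta(y_{n-1}))$. Since $\rho$ is defined on $\wedge^{n-1}\mathfrak{g}$ it is skewsymmetric in its arguments, so the two expressions agree up to the sign of the cyclic permutation moving the last entry to the first, and matching them requires tracking that permutation sign together with the $(-1)^{n-i}$ weights and the surviving minus from $\tilde\rho$. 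Once these signs are reconciled, cancelling the (injective) transpose turns axiom (iv) into condition (4), and conversely; assembling the four equivalences completes the ``if and only if''.
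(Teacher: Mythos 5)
Your proposal is correct and is essentially the paper's own argument: the paper proves the theorem exactly this way, translating each representation axiom for $(V^{*},\tilde\rho,\tilde\alpha_{V},\tilde\beta_{V})$ into a condition on $\rho$ by evaluating both sides on an arbitrary $f\in V^{*}$ (which is your transpose computation written pointwise), using that the two minus signs cancel in products $\tilde\rho(\cdot)\circ\tilde\rho(\cdot)$ while a single minus survives in $\tilde\rho(\cdot)\circ\tilde\beta_{V}$, and deducing the equivalence from the fact that knowing $f\circ A$ for all $f$ determines $A$ — your injectivity of the transpose. The obstacle you anticipate in axiom (iv) never materializes in the paper, because the paper states that axiom for $\tilde\rho$ with the bracket already placed in the first slot, exactly matching condition (4), so no cyclic-permutation sign is ever tracked; your more careful reading in fact exposes that the paper silently identifies the bracket-first and bracket-last forms, which for odd $n$ differ by the sign $(-1)^{n-2}$ coming from the alternating property of $\rho$ on $\wedge^{n-1}\mathfrak{g}$ — a sign sloppiness of the paper, not a flaw in your approach.
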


\begin{proof}Let $f \in V^*$, $X=x_1\wedge\cdots\wedge x_{n-1},~Y=y_1\wedge\cdots\wedge y_{n-1}\in\wedge^{n-1}\mathfrak{g}$.
First, we have
$$
(\tilde\rho(\alpha(x_1),\cdots,\alpha(x_{n-1}))\circ \tilde{\alpha}_V)(f)=- \tilde{\alpha}_V(f)\circ \rho(\alpha(x_{1}),\cdots,\alpha(x_{n-1}))=-f\circ \alpha_{V}\circ \rho(\alpha(x_{1}),\cdots,\alpha(x_{n-1}))$$ and
$\tilde{\alpha}_V\circ\tilde\rho(x_{1},\cdots,x_{n-1})(f)=-\tilde{\alpha}_V(f\circ\rho(x_{1},\cdots,x_{n-1}))=-f\circ\rho(x_{1},\cdots,x_{n-1})\circ\alpha_{V},$ which implies $$\tilde\rho(\alpha(x_{1}),\cdots,\alpha(x_{n-1}))\circ \tilde{\alpha}_V=\tilde{\alpha}_V\circ\tilde\rho(x_1,\cdots,x_{n-1})\Leftrightarrow \alpha_{V}\circ \rho(\alpha(x_{1}),\cdots\alpha(x_{n-1}))=\rho(x_1,\cdots,x_{n-1})\circ\alpha_{V}.$$
Similarly, $\tilde\rho(\beta(x_1),\cdots,\beta(x_{n-1}))\circ \tilde{\beta}_V=\tilde{\beta}_V\circ\tilde\rho(x_1,\cdots,x_{n-1})\Leftrightarrow \beta_V\circ \rho(\beta(x_1),\cdots,\beta(x_{n-1}))=\rho(x_1,\cdots,x_{n-1})\circ\beta_V.$

Then we can get
$$
\begin{array}{llll}
\tilde\rho(\alpha\beta(x_{1}),\cdots,\alpha\beta(x_{n-1}))\circ\tilde\rho(y_{1},\cdots,y_{n-1})(f)
&=&-\tilde\rho(\alpha\beta(x_1),\cdots,\alpha\beta(x_{n-1}))(f\rho(y_1,\cdots,y_{n-1}))\\[0.2cm]
&=&f\rho(y_1,\cdots,y_{n-1})\rho(\alpha\beta(x_1),\cdots,\alpha\beta(x_{n-1}))
\end{array}$$ and
\begin{eqnarray*}
&&\big(\tilde\rho(\beta(y_1),\cdots,\beta(y_{n-1}))\circ\tilde\rho(\alpha(x_1),\cdots,\alpha(x_{n-1}))\\&+&\sum_{i=1}^{n-1}\tilde{\rho}(\beta(y_1),\cdots,\beta(y_{i-1}),
[\beta(x_1),\cdots,\beta(x_{n-1}),y_i]_\mathfrak{g},\beta(y_{i+1})
,\cdots,\beta(y_{n-1})\circ\tilde\beta_V\big)(f)\\
&=&\!f\rho(\alpha(x_{1}),\cdots,\alpha(x_{n-1})\!)\!\rho(\beta(y_1),\cdots,\beta(y_{n-1})\!)\\&-&
\!\sum_{i=1}^{n-1}f\beta_{V}\rho(\beta(y_1),\cdots,\beta(y_{i-1}),[\beta(x_1),\cdots,\beta(x_{n-1}),y_i]_\mathfrak{g},\beta(y_{i+1})
 ,\cdots,\beta(y_{n-1}),
\end{eqnarray*}
which implies
\begin{eqnarray*}
&&\tilde\rho(\alpha\beta(x_1),\cdots,\alpha\beta(x_{n-1}))\circ\tilde\rho(y_1,\cdots,y_{n-1})\\
&=&\tilde\rho(\beta(y_1),\cdots,\beta(y_{n-1}))\!\circ\!\tilde\rho(\alpha(x_1),\cdots,\alpha(x_{n-1})\!)\\&+&\sum_{i=1}^{n-1}\tilde{\rho}(\beta(y_1),\cdots,\beta(y_{i-1}),
[\beta(x_1),\cdots,\beta(x_{n-1}),y_i]_\mathfrak{g},\beta(y_{i+1})
,\cdots,\beta(y_{n-1})\!\circ\!\tilde\beta_V
\end{eqnarray*}
if and only if
\begin{eqnarray*}
&&\rho(y_{1},\cdots,y_{n-1})\rho(\alpha\beta(x_{1}),\cdots,\alpha\beta(x_{n-1}))\\
&=&\rho(\alpha(x_{1}),\cdots,\alpha(x_{n-1}))\rho(\beta(x_1),\cdots,\beta(x_{n-1}))\\&-&\sum_{i=1}^{n-1}\beta_{V}\rho(\beta(y_1),\cdots,\beta(y_{i-1}),[\beta(x_1),\cdots,\beta(x_{n-1}),y_i]_\mathfrak{g},\beta(y_{i+1})
 ,\cdots,\beta(y_{n-1}))).
\end{eqnarray*}
In the same way,
\begin{eqnarray*}
&&\tilde\rho([\beta(x_{1}),\cdots,\beta(x_{n-1}),y_1]_\mathfrak{g},\beta(y_2),\cdots,\beta(y_{n-1}))\tilde\beta_V\\
 &=&\sum_{i=1}^{n-1}(-1)^{n-i}\tilde{\rho}(\alpha\beta(x_1),\cdots,\widehat{\alpha\beta}(x_i),\cdots,\alpha\beta(x_{n-1}),\beta(y_1))\circ\tilde{\rho}(y_2,\cdots,y_{n-1},\alpha(x_i))\\
        &+&\tilde{\rho}(\alpha\beta(x_1),\cdots,\alpha\beta(x_{n-1}))\circ\tilde{\rho}(y_1,\cdots,y_{n-1}).
\end{eqnarray*}
if and only if
\begin{eqnarray*}
 &&\beta_{V}\rho([\beta(x_1),\cdots,\beta(x_{n-1}),y_1]_\mathfrak{g}\beta(y_2),\cdots,\beta(y_{n-1}))\\
       &=&-\sum_{i=1}^{n-1}(-1)^{n-i}\rho(y_2,\cdots,y_{n-1},\alpha(x_i))\circ\rho(\alpha\beta(x_1),\cdots,\widehat{\alpha\beta}(x_i),\cdots,\alpha\beta(x_{n-1}),\beta(y_1))\\
        &-&\rho(y_1,\cdots,y_{n-1})\circ\rho(\alpha\beta(x_1),\cdots,\alpha\beta(x_{n-1})).
\end{eqnarray*}

That shows the theorem holds.
\end{proof}

\begin{cor}
Let $\ad$ be the adjoint representation of an $n$-BiHom-Lie algebra $(\mathfrak{g},[\cdot,\cdots,\cdot]_{\mathfrak{g}},\\
\alpha,\beta)$. Let us consider the bilinear map $\ad^*:\wedge^{n-1}\mathfrak{g}\rightarrow \mathrm{End}(\mathfrak{g}^{*})$ defined by $$\ad^*(x_{1},\cdots,x_{n-1})(f)=-f\circ \ad(x_{1},\cdots,x_{n-1}),~\forall\, x_{1},\cdots,x_{n-1}\in \mathfrak{g}.$$ Then $\ad^*$ is a representation of $\mathfrak{g}$ on $(\mathfrak{g}^{*},\ad^*,\overline{\alpha},\overline\beta)$ if and only if
\begin{enumerate}
\item
$\alpha\circ \ad(\widetilde{\alpha}(X))=\ad(X)\circ \alpha,$
\item $\beta\circ \ad(\widetilde{\beta}(X))=\ad(X)\circ \beta,$

  \item \begin{eqnarray*} &&\ad(Y)\circ \ad(\widetilde{\alpha\beta}(X))-\ad(\widetilde{\alpha}(X))\circ \ad(\widetilde{\beta}(Y))\\ &=&-\sum_{i=1}^{n-1}\beta\ad(\beta(y_1),\cdots,\beta(y_{i-1}),[\beta(x_1),\cdots,\beta(x_{n-1}),y_i]_\mathfrak{g},\beta(y_{i+1})
 ,\cdots,\beta(y_{n-1})));\end{eqnarray*}

    \item\begin{eqnarray*}
         &&\beta\ad([\beta(x_1),\cdots,\beta(x_{n-1}),y_1]_\mathfrak{g},\beta(y_2),\cdots,\beta(y_{n-1}),)\\
       &=&-\sum_{i=1}^{n-1}(-1)^{n-i}\ad(y_2,\cdots,y_{n-1},\alpha(x_i))\circ\ad(\alpha\beta(x_1),\cdots,\widehat{\alpha\beta}(x_i),\cdots,\alpha\beta(x_{n-1}),\beta(y_1))\\
        &-&\ad(Y)\circ\ad(\widetilde{\alpha\beta}(X)).
        \end{eqnarray*}
\end{enumerate}
We call the representation $\ad^*$ the coadjoint representation of $\mathfrak{g}$.
\end{cor}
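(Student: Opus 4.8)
The plan is to obtain this Corollary as an immediate specialization of the preceding Theorem. The earlier Proposition establishes that $(\mathfrak{g},\ad,\alpha,\beta)$ is a representation of the $n$-BiHom-Lie algebra $(\mathfrak{g},[\cdot,\cdots,\cdot]_{\mathfrak{g}},\alpha,\beta)$ on $\mathfrak{g}$ itself. First I would invoke the Theorem with the representation space $V=\mathfrak{g}$, the representation map $\rho=\ad$, and the twisting endomorphisms $\alpha_{V}=\alpha$ and $\beta_{V}=\beta$.

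Under this substitution the dual data appearing in the Theorem specialize in the evident way: the dual space $V^{*}$ becomes $\mathfrak{g}^{*}$; the homomorphisms $\tilde\alpha_{V},\tilde\beta_{V}$ given by $\tilde\alpha_{V}(f)=f\circ\alpha_{V}$ and $\tilde\beta_{V}(f)=f\circ\beta_{V}$ become $\overline{\alpha}(f)=f\circ\alpha$ and $\overline{\beta}(f)=f\circ\beta$; and the dual map $\tilde\rho(x_{1},\cdots,x_{n-1})(f)=-f\circ\rho(x_{1},\cdots,x_{n-1})$ becomes precisely $\ad^{*}(x_{1},\cdots,x_{n-1})(f)=-f\circ\ad(x_{1},\cdots,x_{n-1})$, which is the map in the statement.

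Then I would simply read off the conclusion. The Theorem asserts that $\tilde\rho$ is a representation of $\mathfrak{g}$ on $(V^{*},\tilde\rho,\tilde\alpha_{V},\tilde\beta_{V})$ if and only if its four conditions (1)--(4) hold. Performing the replacements $\rho\mapsto\ad$, $\alpha_{V}\mapsto\alpha$, $\beta_{V}\mapsto\beta$ in those four conditions reproduces verbatim the four conditions (1)--(4) listed in the Corollary. Hence $\ad^{*}$ is a representation on $(\mathfrak{g}^{*},\ad^{*},\overline{\alpha},\overline{\beta})$ if and only if these conditions are satisfied, which is exactly the claimed equivalence.

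The only point requiring attention is a bookkeeping one rather than a genuine obstacle: one must confirm that every standing hypothesis of the Theorem (the commutation and intertwining relations built into the representation axioms) remains valid for the adjoint representation. Since the Proposition already certifies $(\mathfrak{g},\ad,\alpha,\beta)$ as a bona fide representation, all of these are automatically in force, and the specialization goes through with no additional computation.
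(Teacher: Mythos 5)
Your proposal is correct and is exactly the argument the paper intends: the Corollary is stated without proof precisely because it is the immediate specialization of the preceding Theorem to $V=\mathfrak{g}$, $\rho=\ad$, $\alpha_{V}=\alpha$, $\beta_{V}=\beta$ (legitimized by the earlier Proposition that $(\mathfrak{g},\ad,\alpha,\beta)$ is a representation), under which $\tilde\rho$, $\tilde\alpha_{V}$, $\tilde\beta_{V}$ become $\ad^{*}$, $\overline{\alpha}$, $\overline{\beta}$ and the four conditions match verbatim.
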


\section{Extensions of $n$-BiHom-Lie algebras}
The method of $T_\theta$-extension and $T_\theta^*$-extension were introduced in \cite{B} and has already been used for $3$-BiHom-Lie algebras in \cite{Li C}. Now we will
generalize it to $n$-BiHom-Lie algebras.
\subsection{$T_\theta$-extensions of $n$-BiHom-Lie algebras}
\begin{defn}
Let $(\mathfrak{g},[\cdot,\cdots,\cdot]_{\mathfrak{g}},\alpha,\beta)$ be an $n$-BiHom-Lie algebra and $(V,\rho,\alpha_{V},\beta_{V})$ be a representation of $\mathfrak{g}$. If $\theta:
\mathfrak{g}\times\cdots\times\mathfrak{g}\rightarrow V$ is an $n$-linear map and satisfies
\begin{enumerate}
\item
$\alpha_{V}\theta(x_{1},\cdots,x_{n})=\theta(\alpha(x_{1}),\cdots,\alpha(x_{n})),$
\item
$\beta_{V}\theta(x_{1},\cdots,x_{n})=\theta(\beta(x_{1}),\cdots,\beta(x_{n})),$
\item
$\theta(\beta(x_{1}),\cdots,\beta(x_{i}),\beta(x_{i+1}),\cdots,\beta(x_{n}))=-\theta(\beta(x_{1}),\cdots,\beta(x_{i+1}),\beta(x_{i}),\cdots,\beta(x_{n}))=-
\theta(\beta(x_{1}),\cdots,\beta(x_{n-2}),\beta(x_{n}),\beta(x_{n-1})),$
\item
$\theta(\beta^{2}(x_{1}),\cdots,\beta^{2}(x_{n-1}),[\beta(y_{1}),\cdots,\alpha(y_{n})]_{\mathfrak{g}})
+\rho(\beta^{2}(x_{1}),\cdots,\beta^{2}(x_{n-1}))\theta(\beta(y_{1}),\cdots,\alpha(y_{n}))$\\
$\begin{array}{lllll}
&=& \displaystyle{\sum_{k=1}^{n}} (-1)^{n-k} \theta(\beta^{2}(y_{1}),\ldots,\beta^{2}(y_{k-1}),\beta^{2}(y_{k+1}),\ldots,\beta^{2}(y_{n}),[\beta(x_{1}),\ldots, \beta(x_{n-1}),\alpha(y_{k})]_{\mathfrak{g}})\\
&+& \displaystyle{\sum_{k=1}^{n}} (-1)^{n-k} \rho(\beta^{2}(y_{1}),\ldots,\beta^{2}(y_{k-1}),\beta^{2}(y_{k+1}),\ldots,\beta^{2}(y_{n}))\theta(\beta(x_{1}),\ldots, \beta(x_{n-1}),\alpha(y_{k}))\\
\end{array}$
\end{enumerate}
where $x_{i},~y_{i}\in\mathfrak{g}$. Then $\theta$ is called an $n$-cocycle associated with $\rho$.
\end{defn}

\begin{prop}\label{prop1}
Let $(\mathfrak{g},[\cdot,\cdots,\cdot]_{\mathfrak{g}},\alpha,\beta)$ be an $n$-BiHom-Lie algebra and $(V,\rho,\alpha_{V},\beta_{V})$ a representation of $\mathfrak{g}$. Assume that the maps $\alpha$ and $\beta_{V}$ are bijective. If $\theta$ is an $n$-cocycle associated with $\rho$. Then $(\mathfrak{g}\oplus V,[\cdot,\cdots,\cdot]_{\theta},\alpha+\alpha_{V},\beta+\beta_{V})$ is a $n-$BiHom-Lie algebra, where $\alpha+\alpha_{V};~\beta+\beta_{V}:\mathfrak{g}\oplus V\longrightarrow\mathfrak{g}\oplus V$ are defined by $(\alpha+\alpha_{V})(u+x)=\alpha(u)+\alpha_{V}(x)$ and $(\beta+\beta_{V})(u+x)=\beta(u)+\beta_{V}(x),$ and the bracket
$[\cdot,\cdots,\cdot]_{\theta}$ is defined by
$$\begin{array}{llll}[x_1+u_1,\cdots,x_n+u_n]_{\theta}&=&[x_1,\cdots,x_n]_\mathfrak{g}+\theta(x_1,\cdots,x_n)\\
&+&\displaystyle{\sum_{i=1}^{n-1}}(-1)^{n-i}\rho(x_1,\cdots,\widehat{x}_i,\cdots,x_{n-1},\alpha^{-1}\beta(x_n))
(\alpha_{V}\beta^{-1}_{V}(u_i))\\
&+&\rho(x_1,\cdots,x_{n-1})(u_n)
,\,\,\,\,\forall x_i\in\mathfrak{g},u_i\in V.\end{array}$$ $(\mathfrak{g}\oplus V,[\cdot,\cdots,\cdot]_{\theta},\alpha+\alpha_{V},\beta+\beta_{V})$ is called the $T_{\theta}$-extension of $(\mathfrak{g},[\cdot,\cdots,\cdot]_{\mathfrak{g}},\alpha,\beta)$ by $V$, denoted by $T_{\theta}(\mathfrak{g}).$
\end{prop}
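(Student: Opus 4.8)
The plan is to exploit the fact that the bracket $[\cdot,\cdots,\cdot]_{\theta}$ is obtained from the semidirect-product bracket $[\cdot,\cdots,\cdot]_{\rho}$ of Proposition \ref{prop11} by adding the single extra summand $\theta(x_1,\cdots,x_n)$, which takes its values in $V$. Consequently each axiom to be verified splits into a part not involving $\theta$ and a part involving $\theta$: the $\theta$-free part is exactly what is established in Proposition \ref{prop11} (whose standing hypotheses, bijectivity of $\alpha$ and of $\beta_V$, are also assumed here), so it remains only to check that each of the four conditions defining an $n$-cocycle supplies precisely the compatibility needed for the corresponding $n$-BiHom-Lie axiom.

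First I would record that $(\alpha+\alpha_V)\circ(\beta+\beta_V)=(\beta+\beta_V)\circ(\alpha+\alpha_V)$, which follows from $\alpha\beta=\beta\alpha$ and $\alpha_V\beta_V=\beta_V\alpha_V$ and does not involve $\theta$. For the morphism property I would compute $(\alpha+\alpha_V)[x_1+u_1,\cdots,x_n+u_n]_{\theta}$ and $[(\alpha+\alpha_V)(x_1+u_1),\cdots,(\alpha+\alpha_V)(x_n+u_n)]_{\theta}$ just as in Proposition \ref{prop11}; the only new terms are $\alpha_V\theta(x_1,\cdots,x_n)$ on one side and $\theta(\alpha(x_1),\cdots,\alpha(x_n))$ on the other, and these agree by cocycle condition (1). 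The morphism property for $\beta+\beta_V$ follows identically from condition (2). For BiHom-skewsymmetry, transposing two adjacent arguments (with the appropriate powers of $\beta$ and the final $\alpha$ applied) changes the $\rho$-part sign by Proposition \ref{prop11}, while the new summand $\theta(\beta(x_1),\cdots,\beta(x_n))$ changes sign by condition (3); the same reasoning covers the transposition of the last two slots.

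The substantive step is the $n$-BiHom-Jacobi identity. Here I would expand both sides of
\[ [(\beta+\beta_V)^2(x_1+u_1),\cdots,[(\beta+\beta_V)(y_1+v_1),\cdots,(\alpha+\alpha_V)(y_n+v_n)]_{\theta}]_{\theta} \]
exactly along the lines of the displayed computation in Proposition \ref{prop11}, sorting the resulting terms according to where they land. The $\mathfrak{g}$-valued terms, and all terms obtained by applying $\rho$ to the module arguments $u_i,v_i$, reproduce the computation of Proposition \ref{prop11} and therefore balance. What survives is precisely the $\theta$-valued part: on the left one is left with $\theta(\beta^2(x_1),\cdots,\beta^2(x_{n-1}),[\beta(y_1),\cdots,\alpha(y_n)]_{\mathfrak{g}})$ together with $\rho(\beta^2(x_1),\cdots,\beta^2(x_{n-1}))\theta(\beta(y_1),\cdots,\alpha(y_n))$, while on the right one obtains the two alternating sums over $k$ appearing in condition (4) after relabeling the summation index. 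Their equality is exactly cocycle condition (4), so the Jacobi identity holds and $(\mathfrak{g}\oplus V,[\cdot,\cdots,\cdot]_{\theta},\alpha+\alpha_V,\beta+\beta_V)$ is an $n$-BiHom-Lie algebra.

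The hard part will be the bookkeeping in this last step: one must carefully separate the $\mathfrak{g}$-, $\rho$- and $\theta$-valued contributions across the nested expansion, track the correction factors $\alpha^{-1}\beta$ and $\alpha_V\beta_V^{-1}$ (which is where the bijectivity of $\alpha$ and $\beta_V$ enters), and reconcile the signs $(-1)^{n-k}$ after reindexing so that the leftover terms align with the two sides of condition (4). Once the $\theta$-terms are isolated and identified with condition (4), the verification is complete.
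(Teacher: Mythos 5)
Your proposal is correct and is essentially the paper's own argument: the paper's proof of this proposition is literally the single line ``The proof is similar to Proposition \ref{prop11},'' and what you have written is exactly that argument made explicit --- the bracket $[\cdot,\cdots,\cdot]_{\theta}$ differs from $[\cdot,\cdots,\cdot]_{\rho}$ only by the $V$-valued term $\theta(x_1,\cdots,x_n)$, so every axiom splits into a $\theta$-free part already verified in Proposition \ref{prop11} and a $\theta$-part matched one-for-one with cocycle conditions (1)--(4). Your identification of which cocycle condition closes which axiom (condition (1) for the $\alpha+\alpha_V$ morphism property, (2) for $\beta+\beta_V$, (3) for BiHom-skewsymmetry, (4) for the $n$-BiHom-Jacobi identity) is precisely the intended bookkeeping.
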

\begin{proof}
The proof is similar to Proposition \ref{prop11}.
\end{proof}
\begin{prop}\label{propk}
Let $(\mathfrak{g},[\cdot,\cdots,\cdot]_{\mathfrak{g}},\alpha,\beta)$ be an $n$-BiHom-Lie algebra and $(V,\rho,\alpha_{V},\beta_{V})$ be a representation of $\mathfrak{g}$.
Assume that the maps $\alpha$ and $\beta$ are bijective. $f:\mathfrak{g}\rightarrow V$ is a linear map such that $f\circ\alpha=\alpha_{V}\circ f$ and
$f\circ\beta=\beta_{V}\circ f$. Then the $n$-linear map $\theta_{f}:\mathfrak{g}\times\cdots\times\mathfrak{g}\rightarrow V$ given by
$$\begin{array}{llll}\theta_{f}(x_1,\cdots,x_n)&=&f([x_1,\cdots,x_n]_\mathfrak{g})\\
&-&\displaystyle{\sum_{i=1}^{n-1}}(-1)^{n-i}\rho(x_1,\cdots,\widehat{x}_i,\cdots,x_{n-1},\alpha^{-1}\beta(x_n))
f(\alpha\beta^{-1}(x_i))\\
&-&\rho(x_1,\cdots,x_{n-1})f(x_n),\end{array}$$
$\forall x_i\in\mathfrak{g},$ is a $n-$cocycle associated with $\rho$.
\begin{proof}
$\forall x_{1},\cdots,x_{n}\in \mathfrak{g}$, we have
$$\begin{array}{llll}&&\theta_{f}(\alpha(x_{1}),\cdots,\alpha(x_{n}))\\&=&f([\alpha(x_1),\cdots,\alpha(x_n)]_\mathfrak{g})\\
&-&\displaystyle{\sum_{i=1}^{n-1}}(-1)^{n-i}\rho(\alpha(x_1),\cdots,\widehat{\alpha(x_i)},\cdots,\alpha(x_{n-1}),\alpha^{-1}\beta(\alpha(x_n)))
f(\alpha\beta^{-1}(\alpha(x_i)))\\
&-&\rho(\alpha(x_1),\cdots,\alpha(x_{n-1}))f(\alpha(x_n))\\
&=&\alpha_{V}f([x_1,\cdots,x_n]_\mathfrak{g})\\
&-&\displaystyle{\sum_{i=1}^{n-1}}(-1)^{n-i}\alpha_{V}\rho(x_1,\cdots,\widehat{x}_i,\cdots,x_{n-1},\alpha^{-1}\beta(x_n))
f(\alpha\beta^{-1}(x_i))\\
&-&\alpha_{V}\rho(x_1,\cdots,x_{n-1})f(x_n)\\
&=&\alpha_{V}\theta_{f}(x_{1},\cdots,x_{n}).
\end{array}$$
We also have $\theta_{f}(\beta(x_{1}),\cdots,\beta(x_{n}))=\beta_{V}\theta_{f}(x_{1},\cdots,x_{n})$.

Next, for all $i=1,\cdots,n-2$
$$\begin{array}{llll}&&\theta_{f}(\beta(x_{1}),\cdots,\beta(x_{i}),\beta(x_{i+1}),\cdots,\beta(x_{n-1}),\alpha(x_{n}))\\&=&f([\beta(x_1),\cdots,\beta(x_{i}),\beta(x_{i+1}),\beta(x_{n-1}),\alpha(x_n)]_\mathfrak{g})\\
&-&\displaystyle{\sum_{k=1}^{n-1}}(-1)^{n-k}\rho(\beta(x_1),\cdots,\widehat{\beta(x_k)},\cdots,\beta(x_{i}),\beta(x_{i+1}),\cdots,\beta(x_{n-1}),\alpha^{-1}\beta(\alpha(x_n)))
f(\alpha\beta^{-1}(\beta(x_k)))\\
&-&\rho(\beta(x_1),\cdots,\beta(x_{i}),\beta(x_{i+1}),\cdots,\beta(x_{n-1}))f(\alpha(x_n))\\
&=&-f([\beta(x_1),\cdots,\beta(x_{i+1}),\beta(x_{i}),\beta(x_{n-1}),\alpha(x_n)]_\mathfrak{g})\\
&+&\displaystyle{\sum_{k=1}^{n-1}}(-1)^{n-k}\rho(\beta(x_1),\cdots,\widehat{\beta(x_k)},\cdots,\beta(x_{i+1}),\beta(x_{i}),\cdots,\beta(x_{n-1}),\alpha^{-1}\beta(\alpha(x_n)))
f(\alpha\beta^{-1}(\beta(x_k)))\\
&+&\rho(\beta(x_1),\cdots,\beta(x_{i+1}),\beta(x_{i}),\cdots,\beta(x_{n-1}))f(\alpha(x_n))\\
&=&-\theta_{f}(\beta(x_{1}),\cdots,\beta(x_{i+1}),\beta(x_{i}),\cdots,\beta(x_{n-1}),\beta(x_{n}))
\end{array}$$
Similarly, we can get $\theta_{f}(\beta(x_{1}),\cdots,\beta(x_{n-1}),\alpha(x_{n}))
=-\theta_{f}(\beta(x_{1}),\cdots,\beta(x_{n}),\alpha(x_{n-1}))$.

Finally, $\forall x_1,\cdots,x_{n-1},y_1,\cdots,y_n\in \mathfrak{g}$, we have
$$\begin{array}{llll}&&\theta_{f}(\beta^{2}(x_{1}),\cdots,\beta^{2}(x_{n-1}),[\beta(y_{1}),\cdots,\beta(y_{n-1}),\alpha(y_{n})]_{\mathfrak{g}})\\&+&\rho(\beta^{2}(x_{1}),\cdots,\beta^{2}(x_{n-1}))\theta_{f}(\beta(y_{1}),\cdots,\alpha(y_{n}))\\
&=&f([\beta^{2}(x_{1}),\cdots,\beta^{2}(x_{n-1}),[\beta(y_{1}),\cdots,\beta(y_{n-1}),\alpha(y_{n})]_{\mathfrak{g}}]_\mathfrak{g})\\
&-&\displaystyle{\sum_{i=1}^{n-1}}(-1)^{n-i}\rho(\beta^{2}(x_{1}),\cdots,\widehat{\beta^{2}(x_{i})},\cdots,\beta^{2}(x_{n-1}),\alpha^{-1}\beta([\beta(y_{1}),\cdots,\alpha(y_{n})]_{\mathfrak{g}}))
f(\alpha\beta(x_i))\\
&-&\rho(\beta^{2}(x_{1}),\cdots,\beta^{2}(x_{n-1}))f([\beta(y_{1}),\cdots,\alpha(y_{n})]_{\mathfrak{g}})\\
&+&\rho(\beta^{2}(x_{1}),\cdots,\beta^{2}(x_{n-1}))\Big(f([\beta(y_1),\cdots,\beta(y_{n-1}),\alpha(y_n)]_\mathfrak{g})\\
&-&\displaystyle{\sum_{i=1}^{n-1}}(-1)^{n-i}\rho(\beta(y_1),\cdots,\widehat{\beta(y_i)},\cdots,\beta(y_{n-1}),\beta(y_n))
f(\alpha(y_i))-\rho(\beta(y_1),\cdots,\beta(y_{n-1}))f(\alpha(y_n))\Big)\\
&=&f([\beta^{2}(x_{1}),\cdots,\beta^{2}(x_{n-1}),[\beta(y_{1}),\cdots,\beta(y_{n-1}),\alpha(y_{n})]_{\mathfrak{g}}]_\mathfrak{g})\\
&-&\displaystyle{\sum_{i=1}^{n-1}}(-1)^{n-i}\rho(\beta^{2}(x_{1}),\cdots,\widehat{\beta^{2}(x_{i})},\cdots,\beta^{2}(x_{n-1}),\alpha^{-1}\beta([\beta(y_{1}),\cdots,\alpha(y_{n})]_{\mathfrak{g}}))
f(\alpha\beta(x_i))\\
&+&\rho(\beta^{2}(x_{1}),\cdots,\beta^{2}(x_{n-1}))\Big(
-\displaystyle{\sum_{i=1}^{n-1}}(-1)^{n-i}\rho(\beta(y_1),\cdots,\widehat{\beta(y_i)},\cdots,\beta(y_{n-1}),\beta(y_n))
f(\alpha(y_i))\\
&-&\rho(\beta(y_1),\cdots,\beta(y_{n-1}))f(\alpha(y_n))\Big)\\
&=&\displaystyle{\sum_{i=1}^{n}} (-1)^{n-i}f\Big([\beta^{2}(y_{1}),\ldots,\beta^{2}(y_{i-1}),\beta^{2}(y_{i+1}),\ldots,\beta^{2}(y_{n}),[\beta(x_{1}),\ldots, \beta(x_{n-1}),\alpha(y_{i})]_{\mathfrak{g}}]_\mathfrak{g}\Big)\\
&+&\displaystyle{\sum_{i=1}^{n-1}}(-1)^{n-i}\Big\{\displaystyle{\sum_{k=1}^{n-1}}(-1)^{n-k}\Big(\rho(\beta^{2}(y_{1}),\cdots,\widehat{\beta^{2}(y_{k})},\cdots,\beta^{2}(y_{n-1}),\beta^{2}(y_{n}))\circ
\rho(\beta(x_{1}),\cdots,\\
&&\widehat{\beta(x_{i})},\cdots,\beta(x_{n-1}),\beta(y_k))\Big)+\rho(\beta^{2}(y_{1}),\cdots,\beta^{2}(y_{n-1}))\circ\rho(\beta(y_n),\beta(x_{1}),\cdots,
\widehat{\beta(x_{i})},\cdots,\\&&\beta(x_{n-1}))\Big\}f(\alpha(x_{i}))+\\
&+&\displaystyle{\sum_{i=1}^{n}}\Big\{\rho(\beta^{2}(y_{1}),\cdots,\widehat{\beta^{2}(y_{i})},\cdots,\beta^{2}(y_{n-1},\beta^{2}(y_{n})))\circ\rho(\beta(x_{1}),\cdots,\beta(x_{n-1}))f(\alpha(y_{i})\\&-&
\displaystyle{\sum_{k=1}^{n}}\Big(\rho((\beta^{2}(y_{1}),\cdots,\alpha^{-1}\beta[\beta(x_{1}),\cdots,\beta(x_{n-1}),\alpha(y_{k})]_{\mathfrak{g}},\beta^{2}(y_{k}),\cdots,\beta^{2}(y_{n-1})))\Big)f(\alpha\beta(y_{i}))\\
&=&\displaystyle{\sum_{k=1}^{n}(-1)^{n-k}}\Big\{f([\beta^{2}(y_{1}),\cdots,\widehat{y_k},\cdots,\beta^{2}(y_{n}),[\beta(x_{1}),\cdots,\beta(x_{n-1}),\alpha(y_{k})]_{\mathfrak{g}}]_\mathfrak{g})\\
&-&\displaystyle{\sum_{i=1}^{n-1}}(-1)^{n-i}\rho(\beta^{2}(y_{1}),\cdots,\widehat{\beta^{2}(y_{i})},\cdots,\beta^{2}(y_{n-1}),\alpha^{-1}\beta([\beta(x_{1}),\cdots,\alpha(y_{k})]_{\mathfrak{g}}))
f(\alpha\beta(y_i))\\
&+&\rho(\beta^{2}(y_{1}),\cdots,\widehat{\beta^{2}(y_k)},\cdots,\beta^{2}(y_{n}))\Big(
-\displaystyle{\sum_{i=1}^{n-1}}(-1)^{n-i}\rho(\beta(x_1),\cdots,\widehat{\beta(x_i)},\cdots,\beta(x_{n-1}),\beta(y_k))
f(\alpha(x_i))\\
&-&\rho(\beta(x_1),\cdots,\beta(x_{n-1}))f(\alpha(y_k))\Big)\Big\}\\
&=& \displaystyle{\sum_{k=1}^{n}} (-1)^{n-k} \theta_{f}(\beta^{2}(y_{1}),\ldots,\beta^{2}(y_{k-1}),\beta^{2}(y_{k+1}),\ldots,\beta^{2}(y_{n}),[\beta(x_{1}),\ldots, \beta(x_{n-1}),\alpha(y_{k})]_{\mathfrak{g}})\\
&+& \displaystyle{\sum_{k=1}^{n}} (-1)^{n-k} \rho(\beta^{2}(y_{1}),\ldots,\beta^{2}(y_{k-1}),\beta^{2}(y_{k+1}),\ldots,\beta^{2}(y_{n}))\theta(\beta(x_{1}),\ldots, \beta(x_{n-1}),\alpha(y_{k}))\\

\end{array}$$

\end{proof}

\end{prop}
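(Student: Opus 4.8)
The plan is to verify directly the four defining conditions of an $n$-cocycle for the explicit formula defining $\theta_{f}$. The first two conditions are $\alpha_{V}$- and $\beta_{V}$-equivariance, the third is BiHom-skewsymmetry, and the fourth is the cocycle identity; the first three follow by routine manipulation, while the fourth is the genuine computation and the main obstacle.

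For the equivariance condition with $\alpha_{V}$, I would substitute $\alpha(x_{1}),\dots,\alpha(x_{n})$ into the formula for $\theta_{f}$ and extract a factor $\alpha_{V}$ from each of the three summands. This uses three facts: first, $\alpha$ is an algebra morphism and $f\circ\alpha=\alpha_{V}\circ f$, so $f([\alpha(x_{1}),\dots,\alpha(x_{n})]_{\mathfrak{g}})=\alpha_{V}f([x_{1},\dots,x_{n}]_{\mathfrak{g}})$; second, since $\alpha$ and $\beta$ commute (and are bijective) one has $\alpha^{-1}\beta(\alpha(x_{n}))=\alpha(\alpha^{-1}\beta(x_{n}))$ and $\alpha\beta^{-1}(\alpha(x_{i}))=\alpha(\alpha\beta^{-1}(x_{i}))$, so the $\rho$-argument in the middle summand is exactly $\widetilde{\alpha}$ of the original tuple, while $f(\alpha\beta^{-1}(\alpha(x_{i})))=\alpha_{V}f(\alpha\beta^{-1}(x_{i}))$; third, the first defining relation of a representation (Definition~\ref{defi:rep}), in the form $\rho(\widetilde{\alpha}(Z))\circ\alpha_{V}=\alpha_{V}\circ\rho(Z)$, converts each $\rho(\widetilde{\alpha}(\cdots))\,\alpha_{V}f(\cdots)$ into $\alpha_{V}\,\rho(\cdots)f(\cdots)$. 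Collecting the common factor gives $\theta_{f}(\alpha(x_{1}),\dots,\alpha(x_{n}))=\alpha_{V}\theta_{f}(x_{1},\dots,x_{n})$, and the $\beta_{V}$-equivariance is proved identically using $\beta$, $\beta_{V}$ and the second defining relation.

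For skewsymmetry I would interchange two adjacent entries $\beta(x_{i}),\beta(x_{i+1})$ (and, separately, the last two entries). The leading term $f([\cdots]_{\mathfrak{g}})$ changes sign by the BiHom-skewsymmetry of $[\cdot,\dots,\cdot]_{\mathfrak{g}}$, and every $\rho(\cdots)$ term changes sign because $\rho$ is defined on $\wedge^{n-1}\mathfrak{g}$ and is therefore alternating in its entries. The only care needed is to relabel the summation index so that the three summands of the interchanged expression match, term by term, the negatives of the three summands of $\theta_{f}$.

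The heart of the proof is the cocycle identity. I would begin with its left-hand side, $\theta_{f}(\beta^{2}(x_{1}),\dots,\beta^{2}(x_{n-1}),[\beta(y_{1}),\dots,\alpha(y_{n})]_{\mathfrak{g}})+\rho(\beta^{2}(x_{1}),\dots,\beta^{2}(x_{n-1}))\,\theta_{f}(\beta(y_{1}),\dots,\alpha(y_{n}))$, and expand each $\theta_{f}$ by its definition. The decisive step is to apply the linear map $f$ to the $n$-BiHom-Jacobi identity, which rewrites the pure bracket term $f([\beta^{2}(x_{1}),\dots,\beta^{2}(x_{n-1}),[\beta(y_{1}),\dots,\alpha(y_{n})]_{\mathfrak{g}}]_{\mathfrak{g}})$ as $\sum_{k=1}^{n}(-1)^{n-k}f([\beta^{2}(y_{1}),\dots,\widehat{\beta^{2}(y_{k})},\dots,\beta^{2}(y_{n}),[\beta(x_{1}),\dots,\beta(x_{n-1}),\alpha(y_{k})]_{\mathfrak{g}}]_{\mathfrak{g}})$, producing exactly the bracket parts of the $\theta_{f}$-summands on the right-hand side. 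The remaining $\rho(\cdots)f(\cdots)$ and $\rho(\cdots)\rho(\cdots)f(\cdots)$ terms must then be reorganized into $\sum_{k}(-1)^{n-k}\rho(\cdots)\theta_{f}(\cdots,\alpha(y_{k}))$ together with the $\rho$-parts of the terms $\theta_{f}(\cdots,[\beta(x_{1}),\dots,\alpha(y_{k})]_{\mathfrak{g}})$; this is precisely where the third and fourth defining relations of a representation enter, since they govern how the compositions $\rho(\widetilde{\alpha\beta}(X))\rho(Y)$ and $\rho(\cdots)\rho(\cdots)$ may be commuted and recombined. I expect the main difficulty to be purely combinatorial bookkeeping: keeping the hats $\widehat{\beta^{2}(y_{k})}$, the signs $(-1)^{n-k}$ and $(-1)^{n-i}$, and the two summation indices aligned so that, after invoking these two relations at the correct places, every term collapses into the two sums on the right-hand side of the cocycle identity.
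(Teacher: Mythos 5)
Your proposal is correct and follows essentially the same route as the paper's own proof: both verify the four defining conditions of an $n$-cocycle directly, using $f\circ\alpha=\alpha_{V}\circ f$, $f\circ\beta=\beta_{V}\circ f$, the morphism property of $\alpha,\beta$ and representation axioms (1)--(2) for the equivariance and skewsymmetry conditions, and, for the cocycle identity, expanding both $\theta_{f}$-terms, applying $f$ to the $n$-BiHom-Jacobi identity to produce the bracket parts of the right-hand side, and recombining the remaining $\rho(\cdots)f(\cdots)$ and $\rho(\cdots)\rho(\cdots)f(\cdots)$ terms via representation axioms (3)--(4). The only small point your plan leaves implicit, which the paper uses at the outset, is the immediate cancellation of the two terms $\mp\,\rho(\beta^{2}(x_{1}),\cdots,\beta^{2}(x_{n-1}))f([\beta(y_{1}),\cdots,\alpha(y_{n})]_{\mathfrak{g}})$ arising from the two expansions, but this falls out of the bookkeeping you describe.
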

\begin{cor}
Under the above notations, $\theta+\theta_{f}$ is an $n$-cocycle associated with $\rho$.
\end{cor}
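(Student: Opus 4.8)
The plan is to exploit the fact that, with $\rho$, $\alpha$, $\beta$, $\alpha_{V}$, $\beta_{V}$ and the bracket $[\cdot,\cdots,\cdot]_{\mathfrak{g}}$ all fixed, every one of the four defining conditions of an $n$-cocycle in the Definition preceding Proposition \ref{prop1} is $\mathbb{K}$-linear in the map $\theta$. Consequently the set of $n$-cocycles associated with $\rho$ is a linear subspace of the space of $n$-linear maps $\mathfrak{g}\times\cdots\times\mathfrak{g}\to V$, and the Corollary is nothing but closure under addition applied to the two cocycles $\theta$ (given by hypothesis) and $\theta_{f}$ (produced by Proposition \ref{propk}).

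First I would verify linearity condition by condition. For the twisting conditions (1) and (2), if $\theta_1,\theta_2$ both satisfy $\alpha_{V}\theta_i(x_1,\cdots,x_n)=\theta_i(\alpha(x_1),\cdots,\alpha(x_n))$, then applying $\alpha_{V}$ to the sum and using its linearity gives $\alpha_{V}(\theta_1+\theta_2)(x_1,\cdots,x_n)=(\theta_1+\theta_2)(\alpha(x_1),\cdots,\alpha(x_n))$; the same argument works verbatim for $\beta_{V}$. The BiHom-skewsymmetry condition (3) is visibly preserved under addition, being a pointwise linear identity in $\theta$.

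The only condition worth spelling out is the cocycle identity (4). Denote by $L(\theta)$ its left-hand side and by $R(\theta)$ its right-hand side, evaluated at fixed arguments $x_1,\cdots,x_{n-1},y_1,\cdots,y_n$. Because $\theta$ enters each summand of $L$ and of $R$ exactly once --- either directly, or post-composed with the fixed operator $\rho(\beta^{2}(\cdots))$ --- both $L$ and $R$ are additive in $\theta$, that is $L(\theta_1+\theta_2)=L(\theta_1)+L(\theta_2)$ and likewise for $R$. Hence, if $L(\theta_i)=R(\theta_i)$ for $i=1,2$, then $L(\theta_1+\theta_2)=R(\theta_1+\theta_2)$.

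Applying these observations with $\theta_1=\theta$ and $\theta_2=\theta_{f}$, and invoking that $\theta$ is an $n$-cocycle by hypothesis while $\theta_{f}$ is one by Proposition \ref{propk}, yields that $\theta+\theta_{f}$ satisfies all four conditions, so it is an $n$-cocycle associated with $\rho$. There is no genuine obstacle here: the whole content is the linearity of the defining conditions in $\theta$, the essential point being simply that $\rho$ and the bracket $[\cdot,\cdots,\cdot]_{\mathfrak{g}}$ do not depend on $\theta$. Should one prefer, the same conclusion can be obtained by the direct but routine verification that mimics the computation in Proposition \ref{propk} with $\theta$ replaced by $\theta+\theta_{f}$; I would avoid that redundant path in favour of the linearity argument.
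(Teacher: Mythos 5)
Your proposal is correct: all four defining conditions of an $n$-cocycle are linear in $\theta$ (with $\rho$, $\alpha$, $\beta$, $\alpha_V$, $\beta_V$ and the bracket fixed), so the sum of the cocycle $\theta$ and the cocycle $\theta_f$ from Proposition \ref{propk} is again a cocycle. This is exactly the (omitted) reasoning behind the paper's statement --- the corollary is given there without proof precisely because it follows immediately from this linearity, so your write-up simply makes the intended argument explicit.
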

\begin{prop}
Under the above notations, $\sigma:T_{\theta}(\mathfrak{g})\rightarrow T_{\theta+\theta_{f}}(\mathfrak{g})$ is a isomorphism of $n$-BiHom- Lie algebras, where
$\sigma(v+x)=v+f(v)+x,~\forall v\in\mathfrak{g},~x\in V$.
\end{prop}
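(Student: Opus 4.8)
The plan is to verify that $\sigma$ is a bijective linear map which intertwines the two twisting maps and preserves the $n$-ary brackets, which is exactly what is required for an isomorphism of $n$-BiHom-Lie algebras. Since $f:\mathfrak{g}\to V$ is linear, $\sigma$ is linear, and it is invertible with $\sigma^{-1}(x+u)=x-f(x)+u$ for $x\in\mathfrak{g}$, $u\in V$; hence $\sigma$ is a vector-space isomorphism of $\mathfrak{g}\oplus V$ onto itself. Note also that both $T_\theta(\mathfrak{g})$ and $T_{\theta+\theta_f}(\mathfrak{g})$ are genuine $n$-BiHom-Lie algebras by Proposition \ref{prop1}, the latter because $\theta+\theta_f$ is a cocycle by the preceding Corollary, so no Jacobi-type verification is needed here.

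First I would check compatibility with the twisting maps. For $x\in\mathfrak{g}$, $u\in V$ one computes
$$\sigma\big((\alpha+\alpha_V)(x+u)\big)=\alpha(x)+f(\alpha(x))+\alpha_V(u),\qquad (\alpha+\alpha_V)\big(\sigma(x+u)\big)=\alpha(x)+\alpha_V(f(x))+\alpha_V(u),$$
and these agree precisely because $f\circ\alpha=\alpha_V\circ f$. The identical computation with $\beta,\beta_V$, using $f\circ\beta=\beta_V\circ f$, yields $\sigma\circ(\beta+\beta_V)=(\beta+\beta_V)\circ\sigma$.

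The main step is that $\sigma$ is a morphism for the $n$-ary brackets, namely $\sigma([z_1,\cdots,z_n]_\theta)=[\sigma(z_1),\cdots,\sigma(z_n)]_{\theta+\theta_f}$ for $z_i=x_i+u_i$. I would compare the two sides componentwise in $\mathfrak{g}\oplus V$. Both brackets have the same $\mathfrak{g}$-component $[x_1,\cdots,x_n]_\mathfrak{g}$, so only the $V$-components must be matched. On the left, applying $\sigma$ adds $f([x_1,\cdots,x_n]_\mathfrak{g})$ to the $V$-component of $[z_1,\cdots,z_n]_\theta$. On the right, since $\sigma(x_i+u_i)=x_i+(f(x_i)+u_i)$, the cocycle contributes $(\theta+\theta_f)(x_1,\cdots,x_n)$ and every $\rho$-term receives the augmented argument $f(x_i)+u_i$ in place of $u_i$. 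Cancelling the terms common to both sides, namely $\theta(x_1,\cdots,x_n)$, the sum $\sum_{i=1}^{n-1}(-1)^{n-i}\rho(\cdots)(\alpha_V\beta_V^{-1}(u_i))$, and $\rho(x_1,\cdots,x_{n-1})(u_n)$, reduces the matching of the $V$-components to the identity
$$\theta_f(x_1,\cdots,x_n)=f([x_1,\cdots,x_n]_\mathfrak{g})-\sum_{i=1}^{n-1}(-1)^{n-i}\rho(x_1,\cdots,\widehat{x}_i,\cdots,x_{n-1},\alpha^{-1}\beta(x_n))\big(\alpha_V\beta_V^{-1}(f(x_i))\big)-\rho(x_1,\cdots,x_{n-1})\big(f(x_n)\big).$$
This is exactly the defining formula of $\theta_f$ in Proposition \ref{propk}, once one rewrites $\alpha_V\beta_V^{-1}(f(x_i))=f(\alpha\beta^{-1}(x_i))$; the latter follows from the intertwining hypotheses, since inverting $f\circ\beta=\beta_V\circ f$ gives $f\circ\beta^{-1}=\beta_V^{-1}\circ f$, whence $\alpha_V\beta_V^{-1}\circ f=\alpha_V\circ f\circ\beta^{-1}=f\circ\alpha\beta^{-1}$.

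The only real obstacle is the bookkeeping in this last step: one must carefully track how feeding $f(x_i)+u_i$ through each $\rho$-factor splits into the original $u_i$-contribution plus exactly the three terms that reassemble $\theta_f$, and invoke $f\circ\alpha\beta^{-1}=\alpha_V\beta_V^{-1}\circ f$ so that the $\rho$-arguments reproduce the definition of $\theta_f$ verbatim. Combining bijectivity, compatibility with the twisting maps, and bracket preservation, $\sigma$ is an isomorphism of $n$-BiHom-Lie algebras.
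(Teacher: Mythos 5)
Your proposal is correct and follows essentially the same route as the paper: direct verification of bijectivity, compatibility with $\alpha+\alpha_V$ and $\beta+\beta_V$ via $f\circ\alpha=\alpha_V\circ f$, $f\circ\beta=\beta_V\circ f$, and bracket preservation by expanding the $V$-components and cancelling down to the defining formula of $\theta_f$. If anything, your treatment is slightly cleaner than the paper's, since you state explicitly the identity $\alpha_V\beta_V^{-1}\circ f=f\circ\alpha\beta^{-1}$ needed to match the $\rho$-terms, which the paper uses only implicitly (its displayed computation even conflates $f(\alpha_V\beta_V^{-1}(v_i))$ with $f(\alpha\beta^{-1}(v_i))$).
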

\begin{proof}
It is clear that $\sigma$ is a bijection.\\
Let $v_{i}\in\mathfrak{g},~x_{i}\in V,~i=1,2,\cdots,n$.
First, we have $\sigma\circ(\alpha+\alpha_{V})(v_{1}+x_{1})=\sigma(\alpha(v_{1})+\alpha_{V}(x_{1}))=\alpha(v_{1})
+f\circ\alpha(v_{1})+\alpha_{V}(x_{1})=\alpha(v_{1})+\alpha_{V}\circ f(v_{1})+\alpha_{V}(x_{1})=(\alpha+\alpha_{V})(v_{1}+f(v_{1})+x_{1})=(\alpha+\alpha_{V})\circ\sigma(v_{1}+x_{1}),$ then $\sigma\circ(\alpha+\alpha_{V})=(\alpha+\alpha_{V})\circ\sigma.$\\
Similarly, $\sigma\circ(\beta+\beta_{V})=(\beta+\beta_{V})\circ\sigma.$

Now, we have
$$\begin{array}{lllllll}&&[\sigma(v_{1}+x_{1}),\cdots,\sigma(v_{n}+x_{n})]_{\theta+\theta_{f}}\\
&=&[v_{1}+f(v_{1})+x_{1},\cdots,v_{n}+f(v_{n})+x_{n}]_{\theta+\theta_{f}}\\
&=&[v_{1},\cdots,v_{n}]_{\mathfrak{g}}+(\theta+\theta_{f})(v_{1},\cdots,v_{n})+\rho(v_{1},\cdots,v_{n-1})(f(v_{n})+x_{n})\\
&+&\displaystyle{\sum_{i=1}^{n-1}}(-1)^{n-i}\rho(v_1,\cdots,\widehat{v}_i,\cdots,v_{n-1},\alpha^{-1}\beta(v_n))
(\alpha_{V}\beta^{-1}_{V}(f(v_i)+x_i)))\\
&=&[v_{1},\cdots,v_{n}]_{\mathfrak{g}}+\theta(v_{1},\cdots,v_{n})
+f([v_{1},\cdots,v_{n}]_{\mathfrak{g}})\\
&-&\displaystyle{\sum_{i=1}^{n-1}}(-1)^{n-i}\rho(v_1,\cdots,\widehat{v}_i,\cdots,v_{n-1},\alpha^{-1}\beta(v_n))
f(\alpha_{V}\beta^{-1}_{V}(v_i))
-\rho(v_1,\cdots,v_{n-1})f(v_n)\\
&+&\displaystyle{\sum_{i=1}^{n-1}}(-1)^{n-i}\rho(v_1,\cdots,\widehat{v}_i,\cdots,v_{n-1},\alpha^{-1}\beta(v_n))
(f(\alpha_{V}\beta^{-1}_{V}(v_i)))+\rho(v_{1},\cdots,v_{n-1})(f(v_{n}))\\
&+&\displaystyle{\sum_{i=1}^{n-1}}(-1)^{n-i}\rho(v_1,\cdots,\widehat{v}_i,\cdots,v_{n-1},\alpha^{-1}_{V}\beta_{V}(v_n))
(\alpha_{V}\beta^{-1}_{V}(x_i))+\rho(v_{1},\cdots,v_{n-1})(x_{n})\\
&=&[v_{1},\cdots,v_{n}]_{\mathfrak{g}}+\theta(v_{1},\cdots,v_{n})+f([v_{1},\cdots,v_{n}]_{\mathfrak{g}})\\
&+&\displaystyle{\sum_{i=1}^{n-1}}(-1)^{n-i}\rho(v_1,\cdots,\widehat{v}_i,\cdots,v_{n-1},\alpha^{-1}\beta(v_n))
(\alpha_{V}\beta^{-1}_{V}(x_i))+\rho(v_{1},\cdots,v_{n-1})(x_{n})\\
&=&\sigma\Big( [v_{1},\cdots,v_{n}]_{\mathfrak{g}}+\theta(v_{1},\cdots,v_{n})+\displaystyle{\sum_{i=1}^{n-1}}(-1)^{n-i}\rho(v_1,\cdots,\widehat{v}_i,\cdots,v_{n-1},\alpha^{-1}\beta(v_n))
(\alpha_{V}\beta^{-1}_{V}(x_i))\\&+&\rho(v_{1},\cdots,v_{n-1})(x_{n}) \Big)\\
&=&\sigma([v_{1}+x_{1},\cdots,v_{n}+x_{n}]_{\theta}).
\end{array}$$

Then $\sigma:T_{\theta}(\mathfrak{g})\rightarrow T_{\theta+\theta_{f}}(\mathfrak{g})$ is a isomorphism of $n$-BiHom- Lie algebras.
\end{proof}
\subsection{$T_\theta^{\ast}$-extensions of $n$-BiHom-Lie algebras}
\begin{defn}
Let $\mathfrak{g}$ be an $n$-BiHom-Lie algebra over a field $\mathbb{K}$. We inductively define a derived series
$$(\mathfrak{g}^{(p)})_{p\geq 0}: \mathfrak{g}^{(0)}=\mathfrak{g},\ \mathfrak{g}^{(p+1)}=[\mathfrak{g}^{(p)},\cdots,\mathfrak{g}^{(p)},\mathfrak{g}]$$
and a central descending series
$$(\mathfrak{g}^{p})_{p\geq 0}: \mathfrak{g}^{0}=\mathfrak{g},\ \mathfrak{g}^{p+1}=[\mathfrak{g}^{p},\mathfrak{g},\cdots,\mathfrak{g}].$$

$\mathfrak{g}$ is called solvable and nilpotent $($of length $k$$)$ if and only if there is a $($smallest$)$ integer $k$ such that $\mathfrak{g}^{(k)}=0$ and $\mathfrak{g}^{k}=0$, respectively.
\end{defn}

\begin{thm}
Let $(\mathfrak{g},[\cdot,\cdots,\cdot],\alpha,\beta)$ be an $n$-BiHom-Lie algebra over a field $\mathbb{K}$.
\begin{enumerate}
   \item  If $\mathfrak{g}$ is solvable, then $(\mathfrak{g}\oplus \mathfrak{g}^{*},[\cdot,\cdots,\cdot]_\theta,\alpha+\tilde{\alpha},\beta+\tilde{\beta})$ is solvable.
   \item  If $\mathfrak{g}$ is nilpotent, then $(\mathfrak{g}\oplus \mathfrak{g}^{*},[\cdot,\cdots,\cdot]_\theta,\alpha+\tilde{\alpha},\beta+\tilde{\beta})$ is nilpotent.
\end{enumerate}
\end{thm}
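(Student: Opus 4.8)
The plan is to exploit the fact that $\mathfrak{g}^{*}$, identified with $\{0\}\oplus\mathfrak{g}^{*}$, is an abelian ideal of $L:=T_{\theta}^{*}(\mathfrak{g})=\mathfrak{g}\oplus\mathfrak{g}^{*}$, and that projection onto $\mathfrak{g}$ is a morphism. First I would record two structural facts read off directly from the bracket $[\cdot,\cdots,\cdot]_{\theta}$ (here $\rho=\ad^{*}$, $\alpha_{V}=\tilde\alpha$, $\beta_{V}=\tilde\beta$). On the one hand, if all $n$ arguments lie in $\mathfrak{g}^{*}$ then every summand of the bracket vanishes: the $\mathfrak{g}$-part $[x_{1},\cdots,x_{n}]_{\mathfrak{g}}$ and the $\theta$-part are zero by multilinearity in the (vanishing) $\mathfrak{g}$-entries, and each $\ad^{*}$-term carries a tuple of $\mathfrak{g}$-entries that are all zero, so $\ad^{*}(0,\dots)=0$. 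Hence $[\mathfrak{g}^{*},\cdots,\mathfrak{g}^{*}]_{\theta}=0$ and $\mathfrak{g}^{*}$ is abelian. On the other hand, as soon as one argument lies in $\mathfrak{g}^{*}$ the output lies in $\mathfrak{g}^{*}$, since the only term landing in $\mathfrak{g}$ is $[x_{1},\cdots,x_{n}]_{\mathfrak{g}}$, which depends solely on the $\mathfrak{g}$-components; together with $(\alpha+\tilde\alpha)(\mathfrak{g}^{*})\subseteq\mathfrak{g}^{*}$ and $(\beta+\tilde\beta)(\mathfrak{g}^{*})\subseteq\mathfrak{g}^{*}$ this shows $\mathfrak{g}^{*}$ is an ideal.

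Next I would show that the projection $\pi:L\to\mathfrak{g}$, $\pi(x+f)=x$, is a surjective morphism of $n$-BiHom-Lie algebras with $\ker\pi=\mathfrak{g}^{*}$: it intertwines the twists, $\pi\circ(\alpha+\tilde\alpha)=\alpha\circ\pi$ and $\pi\circ(\beta+\tilde\beta)=\beta\circ\pi$, and satisfies $\pi([\,\cdots\,]_{\theta})=[\pi(\cdot),\cdots,\pi(\cdot)]_{\mathfrak{g}}$, again because the $\mathfrak{g}$-component of the bracket is exactly $[x_{1},\cdots,x_{n}]_{\mathfrak{g}}$. Since a morphism sends brackets to brackets, an immediate induction gives $\pi(L^{(p)})=\mathfrak{g}^{(p)}$ and $\pi(L^{p})=\mathfrak{g}^{p}$ for every $p$, whence $L^{(p)}\subseteq\mathfrak{g}^{(p)}\oplus\mathfrak{g}^{*}$ and $L^{p}\subseteq\mathfrak{g}^{p}\oplus\mathfrak{g}^{*}$.

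For \textbf{(1)} suppose $\mathfrak{g}^{(k)}=0$. Then $L^{(k)}\subseteq\mathfrak{g}^{*}$, so in $L^{(k+1)}=[L^{(k)},\cdots,L^{(k)},L]$ the first $n-1$ arguments lie in $\mathfrak{g}^{*}$; by the computation above, a bracket whose first $n-1$ entries lie in the abelian ideal $\mathfrak{g}^{*}$ vanishes, giving $L^{(k+1)}=0$, so $L$ is solvable. (If a single argument covering the case $n=2$ is wanted, the one surviving coadjoint term still lands in $\mathfrak{g}^{*}$ and is killed after one further step by the filtration argument below.)

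For \textbf{(2)}, which I expect to be the main obstacle, suppose $\mathfrak{g}^{k}=0$, so $L^{k}\subseteq\mathfrak{g}^{*}$. Here the descending central series meets $\mathfrak{g}^{*}$ in only one slot, so $L^{k+1}=[L^{k},L,\cdots,L]$ need not vanish: the surviving contribution is a single coadjoint action $\ad^{*}(\cdots)$ applied to the $\mathfrak{g}^{*}$-entry. To control the iteration I would introduce the annihilator filtration $W_{m}=(\mathfrak{g}^{m})^{\perp}\subseteq\mathfrak{g}^{*}$ and prove $\ad^{*}(z_{1},\cdots,z_{n-1})(W_{m})\subseteq W_{m-1}$ for all $z_{i}\in\mathfrak{g}$; this holds because $\ad(z_{1},\cdots,z_{n-1})$ raises the central-series degree by one (so $[\mathfrak{g},\cdots,\mathfrak{g},\mathfrak{g}^{m-1}]\subseteq\mathfrak{g}^{m}$ by BiHom-skewsymmetry and the morphism property of $\alpha,\beta$) and $\ad^{*}$ is its negative transpose, while $\tilde\alpha,\tilde\beta$ and the twists $\alpha^{-1}\beta,\ \tilde\alpha\tilde\beta^{-1}$ occurring in the bracket preserve each $W_{m}$ since $\alpha,\beta$ preserve each $\mathfrak{g}^{m}$. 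Starting from $W_{k}=\mathfrak{g}^{*}$ and descending yields $L^{k+j}\subseteq W_{k-j}$, hence $L^{2k}\subseteq W_{0}=\{f:f|_{\mathfrak{g}}=0\}=0$, so $L$ is nilpotent of length at most $2k$. The delicate point, and the reason nilpotency is harder than solvability, is precisely this bookkeeping: one must verify that every bracket formed in $L^{p}$ reduces to a single coadjoint step once its argument lands in $\mathfrak{g}^{*}$, and that none of the $\alpha,\beta$-twists moves a functional out of its filtration level.
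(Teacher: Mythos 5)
Your proposal is correct, and its skeleton is the same as the paper's: both arguments rest on the facts that $\mathfrak{g}^*$ is an abelian ideal of $L=\mathfrak{g}\oplus\mathfrak{g}^*$ on which $[\cdot,\cdots,\cdot]_\theta$ acts only through coadjoint terms, and that the $\mathfrak{g}$-component of the bracket is exactly $[\cdot,\cdots,\cdot]_{\mathfrak{g}}$, giving $L^{(p)}\subseteq\mathfrak{g}^{(p)}+\mathfrak{g}^*$ and $L^{p}\subseteq\mathfrak{g}^{p}+\mathfrak{g}^*$ (the paper obtains the first inclusion by induction on the expanded bracket and the second from $L^{s}/\mathfrak{g}^*\cong\mathfrak{g}^{s}$, which is your projection-morphism argument in disguise); part (1) is then the identical one-extra-step argument in both texts. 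Where you genuinely diverge is the bookkeeping in part (2): the paper takes $h\in L^{s}\subseteq\mathfrak{g}^*$, brackets it through $s-1$ further rounds of arbitrary elements, and computes explicitly that the resulting functional evaluated at $b\in\mathfrak{g}$ is $\pm h$ applied to an iterated twisted adjoint image of $b$, then invokes $h(\mathfrak{g}^{s})=0$; your annihilator filtration $W_m=(\mathfrak{g}^m)^{\perp}$ with $\ad^*(z_1,\cdots,z_{n-1})(W_m)\subseteq W_{m-1}$ is the dual formulation of that same computation. Your packaging buys two things: a clean quantitative conclusion ($L^{k+j}\subseteq W_{k-j}$, hence $L^{2k}\subseteq W_0=0$), and a silent repair of an off-by-one in the paper, whose $s-1$ rounds of bracketing only push $b$ into $\mathfrak{g}^{s-1}$ rather than $\mathfrak{g}^{s}$, so that $s$ rounds (exactly what your filtration count produces) are what is actually needed. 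You are also more scrupulous about $n=2$, where the solvability step leaves a surviving coadjoint term that the paper's displayed computation overlooks; since with the paper's definitions the derived and central series coincide when $n=2$, deferring that case to the nilpotency argument, as you do, is the correct fix. One caveat applies equally to both proofs: each permutes the distinguished entry of the bracket between the last slot (where the iterated adjoints produce it) and the first slot (where the series $\mathfrak{g}^{p+1}=[\mathfrak{g}^{p},\mathfrak{g},\cdots,\mathfrak{g}]$ is defined), which in the BiHom setting rests on BiHom-skewsymmetry and hence implicitly on regularity of $\alpha,\beta$ — an assumption the paper uses to build $[\cdot,\cdots,\cdot]_\theta$ (bijectivity of $\alpha$ and $\beta_V$) but does not restate in this theorem; you at least flag this, which the paper does not.
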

\begin{proof}\begin{enumerate}
\item
We suppose that $\mathfrak{g}$ is solvable of length $s$, i.e. $\mathfrak{g}^{(s)}=[\mathfrak{g}^{(s-1)},\cdots,\mathfrak{g}^{(s-1)},\mathfrak{g}]=0.$ We claim that $(\mathfrak{g}\oplus \mathfrak{g}^*)^{(k)}\subseteq \mathfrak{g}^{(k)}+\mathfrak{g}^*$, which we prove by induction on $k$. The case $k=1$, by Proposition \ref{prop1}, we have
\begin{eqnarray*}
(\mathfrak{g}\oplus \mathfrak{g}^*)^{(1)}&=&[\mathfrak{g}\oplus \mathfrak{g}^*,\cdots,\mathfrak{g}\oplus \mathfrak{g}^*]_\theta\\
&=&[\mathfrak{g},\cdots,\mathfrak{g}]_\theta+\displaystyle{\sum_{i=1}^{n}}[\mathfrak{g},\cdots,\displaystyle{\underbrace{\mathfrak{g}^*}_{i}},\mathfrak{g},\cdots,\mathfrak{g}]_\theta\\
&=&[\mathfrak{g},\cdots,\mathfrak{g}]_{\mathfrak{g}}+\theta(\mathfrak{g},\cdots,\mathfrak{g})+\displaystyle{\sum_{i=1}^{n}}[\mathfrak{g},\cdots,\displaystyle{\underbrace{\mathfrak{g}^*}_{i}}
,\mathfrak{g},\cdots,\mathfrak{g}]_\theta\\
&\subseteq&\mathfrak{g}^{(1)}+\mathfrak{g}^*.
\end{eqnarray*}
By induction, $(\mathfrak{g}\oplus \mathfrak{g}^*)^{(k-1)}\subseteq \mathfrak{g}^{(k-1)}+\mathfrak{g}^*$. So
\begin{eqnarray*}
&&(\mathfrak{g}\oplus \mathfrak{g}^*)^{(k)}\\
&=&[(\mathfrak{g}\oplus \mathfrak{g}^*)^{(k-1)},\cdots,(\mathfrak{g}\oplus \mathfrak{g}^*)^{(k-1)},\mathfrak{g}\oplus \mathfrak{g}^*]_\theta\\
&\subseteq&[\mathfrak{g}^{(k-1)}+\mathfrak{g}^*,\cdots,\mathfrak{g}^{(k-1)}+\mathfrak{g}^*,\mathfrak{g}\oplus \mathfrak{g}^*]_\theta\\
&=&[\mathfrak{g}^{(k-1)},\cdots,\mathfrak{g}^{(k-1)},\mathfrak{g}]+\theta(\mathfrak{g}^{(k-1)},\cdots,\mathfrak{g}^{(k-1)},\mathfrak{g})+[\mathfrak{g}^{(k-1)},\cdots,\mathfrak{g}^{(k-1)},
\mathfrak{g}^*]_\theta\\&+&\displaystyle{\sum_{i=1}^{n-1}}[\mathfrak{g}^{(k-1)},\cdots,\displaystyle{\mathfrak{g}^{(k-1)}},\displaystyle{\underbrace{\mathfrak{g}^*}
_{i}},\mathfrak{g}^{(k-1)},\cdots,\mathfrak{g}^{(k-1)},\mathfrak{g}]_\theta\\
&\subseteq&\mathfrak{g}^{(k)}+\mathfrak{g}^*.
\end{eqnarray*}
Therefore
\begin{eqnarray*}
&&(\mathfrak{g}\oplus \mathfrak{g}^*)^{(s+1)}\\
&\subseteq&[\mathfrak{g}^{(s)},\cdots,\mathfrak{g}^{(s)},\mathfrak{g}]+\theta(\mathfrak{g}^{(s)},\cdots,\mathfrak{g}^{(s)},\mathfrak{g})+[\mathfrak{g}^{(s)},\cdots,\mathfrak{g}^{(s)},\mathfrak{g}^*]_\theta\\
&+&\displaystyle{\sum_{i=1}^{n-1}}[\mathfrak{g}^{(s)},\cdots,\displaystyle{\mathfrak{g}^{(s)}},\displaystyle{\underbrace{\mathfrak{g}^*}_{i}},\mathfrak{g}
^{(s)},\cdots,\mathfrak{g}^{(s)},\mathfrak{g}]_\theta\\
&=&0.
\end{eqnarray*}
It follows $(\mathfrak{g}\oplus \mathfrak{g}^{*},[\cdot,\cdots,\cdot]_\theta,\alpha+\tilde{\alpha},\beta+\tilde{\beta})$ is solvable.

\item Suppose that $\mathfrak{g}$ is nilpotent of length $s$. Since $(\mathfrak{g}\oplus \mathfrak{g}^{*})^{s}/\mathfrak{g}^{*}\cong \mathfrak{g}^{s}$ and $\mathfrak{g}^{s}=0$, we have
$(\mathfrak{g}\oplus \mathfrak{g}^{*})^{s}\subseteq \mathfrak{g}^{*}$. Let $h\in(\mathfrak{g}\oplus \mathfrak{g}^{*})^{s}\subseteq \mathfrak{g}^{*},~ b\in \mathfrak{g},~x_{i}^{j}+f_{i}^{j}\in \mathfrak{g}\oplus \mathfrak{g}^{*}, ~1\leq i\leq s-1,~~1\leq j\leq n-1$, we have
\begin{eqnarray*}
&&[[\cdots[h,x_1^{1}+f_{1}^{1},\cdots, x_{1}^{n-1}+f_{1}^{n-1}]_\theta,\cdots]_\theta,x_{s-1}^{1}+f_{s-1}^{1},\cdots,x_{s-1}^{n-1}+f_{s-1}^{n-1}]_\theta(b)\\
&=&(-1)^{s-1}h\alpha\beta^{-1}\ad(x_1^{1},\cdots,\alpha^{-1}\beta(x_1^{2}),\alpha^{-1}\beta(x_1^{n-1})\!)\alpha\beta^{-1}\ad(x_2^{1},\alpha^{-1}\beta(x_2^{2})\cdots,\\&&
\alpha^{-1}\beta(x_2^{n-1})\!)\!\cdots\!\alpha\beta^{-1}\ad(x_{s-1}^{1},\!\cdots,\alpha^{-1}\beta(x_{s-1}^{2}),\alpha^{-1}\beta(x_{s-1}^{n-1})\!)\!(b)\\
&=&(-1)^{s-1}h\alpha\beta^{-1}[x_1^{1},\cdots,\alpha^{-1}\beta(x_1^{2}),\alpha^{-1}\beta(x_1^{n-1}),\alpha\beta^{-1}[x_2^{1},\alpha^{-1}\beta(x_2^{2})\cdots,\\
&&\alpha^{-1}\beta(x_2^{n-1})),\cdots,\alpha\beta^{-1}[x_{s-1}^{1},\cdots,\alpha^{-1}\beta(x_{s-1}^{2}),\alpha^{-1}\beta(x_{s-1}^{n-1}),b]_{\mathfrak{g}}\cdots]_{\mathfrak{g}}\\
&\in &h(\mathfrak{g}^{s})=0.
\end{eqnarray*}
Thus $(\mathfrak{g}\oplus \mathfrak{g}^{*},[\cdot,\cdots,\cdot]_\theta,\alpha+\tilde{\alpha},\beta+\tilde{\beta})$ is nilpotent.
\end{enumerate}
\end{proof}

Now we consider the following symmetric bilinear form $q_{\mathfrak{g}}$ on $\mathfrak{g}\oplus \mathfrak{g}^{*}$,
$$q_{\mathfrak{g}}(x+f,y+g)=f(y)+g(x),~ \forall\, x+f, y+g\in \mathfrak{g}\oplus \mathfrak{g}^*.$$
Obviously, $q_{\mathfrak{g}}$ is  nondegenerate. In fact, if $x+f$ is orthogonal to  all elements $y+g$ of $\mathfrak{g}\oplus \mathfrak{g}^{*}$, then $f(y)=0$ and $g(x)=0$,  which implies that $x=0$ and $f=0$.

\begin{lem}\label{lemma3.2}
Let $q_\mathfrak{g}$ be as above. Then the 4-tuple $(\mathfrak{g}\oplus \mathfrak{g}^{*},q_\mathfrak{g},\alpha+\tilde{\alpha},\beta+\tilde{\beta})$ is a quadratic $n$-BiHom-Lie algebra if and only if $\theta$ satisfies for all $ x_1,\cdots,x_{n+1}\in \mathfrak{g}$,
\begin{eqnarray*}\label{203}
\theta(\beta(x_1), \cdots, \alpha(x_{n}))(\alpha(x_{n+1}))+\theta(\beta(x_1),\cdots,\alpha(x_{n+1}))(\alpha(x_{n}))=0.
\end{eqnarray*}
\end{lem}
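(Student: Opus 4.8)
The plan is to check the four defining requirements of a quadratic $n$-BiHom-Lie algebra one at a time and observe that only one of them constrains $\theta$. Three hold unconditionally: nondegeneracy of $q_\mathfrak{g}$ is already recorded above; symmetry is immediate, since $q_\mathfrak{g}(x+f,y+g)=f(y)+g(x)=g(x)+f(y)=q_\mathfrak{g}(y+g,x+f)$; and the $q_\mathfrak{g}$-symmetry of $\alpha+\tilde\alpha$ (and likewise $\beta+\tilde\beta$) is a one-line check from $\tilde\alpha(f)=f\circ\alpha$, because both $q_\mathfrak{g}((\alpha+\tilde\alpha)(x+f),y+g)$ and $q_\mathfrak{g}(x+f,(\alpha+\tilde\alpha)(y+g))$ equal $f(\alpha(y))+g(\alpha(x))$. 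Hence the entire lemma reduces to deciding when $q_\mathfrak{g}$ is $\alpha\beta$-invariant.

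For the invariance, I would set $X_j=x_j+f_j$ and compute both sides of the defining identity, namely
$$q_\mathfrak{g}([(\beta+\tilde\beta)(X_1),\dots,(\beta+\tilde\beta)(X_{n-1}),(\alpha+\tilde\alpha)(X_n)]_\theta,(\alpha+\tilde\alpha)(X_{n+1}))$$
against the term $-q_\mathfrak{g}((\alpha+\tilde\alpha)(X_n),[(\beta+\tilde\beta)(X_1),\dots,(\beta+\tilde\beta)(X_{n-1}),(\alpha+\tilde\alpha)(X_{n+1})]_\theta)$. The crucial simplification is that $\mathfrak{g}$ and $\mathfrak{g}^*$ are each totally isotropic for $q_\mathfrak{g}$, so in every pairing only the cross terms survive: the $\mathfrak{g}^*$-component of the bracket paired with the $\mathfrak{g}$-part of the last slot, plus the $\mathfrak{g}$-component of the bracket paired with its $\mathfrak{g}^*$-part. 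Expanding the bracket via Proposition \ref{prop1} with $\rho=\ad^*$, so that $\rho(z_1,\dots,z_{n-1})(g)(w)=-g([z_1,\dots,z_{n-1},w]_\mathfrak{g})$, and using $\alpha^{-1}\beta(\alpha(x_n))=\beta(x_n)$ together with $\tilde\alpha\tilde\beta^{-1}\tilde\beta=\tilde\alpha$, splits each side into a $\theta$-contribution (independent of all the $f_j$) and contributions indexed by $f_1,\dots,f_{n+1}$.

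I would then match the two sides according to which $f_j$ occurs. After inserting the explicit form of $\ad^*$, the $f_n$- and $f_{n+1}$-contributions agree on the two sides directly, reducing (up to sign) to $f_n(\alpha[\beta(x_1),\dots,\beta(x_{n-1}),\alpha(x_{n+1})]_\mathfrak{g})$ and $f_{n+1}(\alpha[\beta(x_1),\dots,\beta(x_{n-1}),\alpha(x_n)]_\mathfrak{g})$ respectively. For $1\le i\le n-1$, the left contribution carries $f_i(\alpha[\dots,\beta(x_n),\alpha(x_{n+1})]_\mathfrak{g})$ and the right carries $f_i(\alpha[\dots,\beta(x_{n+1}),\alpha(x_n)]_\mathfrak{g})$, with prefactors differing by a sign; the BiHom-skewsymmetry of $[\cdot,\dots,\cdot]_\mathfrak{g}$, which flips the last two (one $\beta$-twisted, one $\alpha$-twisted) slots at the cost of a sign, makes these equal, so they cancel in the difference. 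Once all the $f_j$-contributions cancel, the residue of the invariance identity is exactly
$$\theta(\beta(x_1),\dots,\alpha(x_n))(\alpha(x_{n+1}))+\theta(\beta(x_1),\dots,\alpha(x_{n+1}))(\alpha(x_n)),$$
which depends only on the $x_j$. Therefore $q_\mathfrak{g}$ is $\alpha\beta$-invariant for all arguments if and only if this expression vanishes identically, which is the asserted equivalence.

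The main obstacle is purely organizational rather than conceptual: the full expansion of both brackets generates many $\ad^*$-terms with shifted index ranges and hat-omissions, and the argument hinges on seeing that the survivors pair up correctly. Isolating the computation to the surviving cross terms and grouping everything by the index $j$ of $f_j$ keeps the bookkeeping tractable, and the only structural input beyond the explicit coadjoint formula is the BiHom-skewsymmetry of $\mathfrak{g}$. Notably, no further hypothesis on $\theta$ is ever invoked, which is precisely what renders the displayed condition simultaneously necessary and sufficient.
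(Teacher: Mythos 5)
Your proposal is correct and follows essentially the same route as the paper's proof: both reduce the lemma to the $\alpha\beta$-invariance of $q_\mathfrak{g}$ (after directly checking nondegeneracy, symmetry, and the $q_\mathfrak{g}$-symmetry of $\alpha+\tilde{\alpha}$ and $\beta+\tilde{\beta}$), expand the twisted brackets through the coadjoint action $\ad^*$, cancel every $f_j$-cross-term --- the $f_n$, $f_{n+1}$ terms pairwise and the $f_i$ terms for $1\le i\le n-1$ via BiHom-skewsymmetry --- and identify the surviving residue with the stated $\theta$-expression. The paper's proof is exactly this computation written out in full, so no comparison beyond organization is needed.
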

\begin{proof}

Now suppose that  $x_i+f_i\in \mathfrak{g}\oplus \mathfrak{g}^*, i=1,\cdots,n+1$, we have
\begin{eqnarray*}
q_{\mathfrak{g}}((\alpha+\tilde{\alpha})(x_1+f_1), x_2+f_2)&=&q_\mathfrak{g}(\alpha(x_1)+f_1\circ\alpha,x_2+f_2)\\
&=&f_2\circ\alpha(x_1)+f_1(\alpha(x_2))\\
&=&q_\mathfrak{g}(x_1+f_1,(\alpha+\tilde{\alpha})(x_2+f_2)).
\end{eqnarray*}
Then $\alpha+\tilde{\alpha}$ is $q_\mathfrak{g}$-symmetric. 

In the same way, $\beta+\tilde{\beta}$ is $q_\mathfrak{g}$-symmetric.

Next, we can obtain
\begin{eqnarray*}
&&q_{\mathfrak{g}}\big([(\beta+\tilde{\beta})(x_1+f_1),\cdots,(\beta+\tilde{\beta})(x_{n-1}+f_{n-1}),(\alpha+\tilde{\alpha})(x_{n}+f_{n})]_\theta, (\alpha+\tilde{\alpha})(x_{n+1}+f_{n+1})\big)\\
&&+q_{\mathfrak{g}}\big((\alpha+\tilde{\alpha})(x_{n-1}+f_{n-1}),[(\beta+\tilde{\beta})(x_1+f_1),\cdots,(\beta+\tilde{\beta})(x_{n-1}+f_{n-1}) (\alpha+\tilde{\alpha})(x_{n+1}+f_{n+1})]_\theta\big)\\
&=&q_{\mathfrak{g}}\big([\beta(x_1)+f_1\circ\beta, \cdots,\beta(x_{n})+f_{n}\circ\beta]_\theta, \alpha(x_{n+1})+f_{n+1}\circ\alpha\big)\\
&&+q_{\mathfrak{g}}\big(\alpha(x_n)+f_n\circ\alpha, [\beta(x_1)+f_1\circ\beta,\cdots, \alpha(x_{n+1})+f_{n+1}\circ\alpha]_\theta\big)\\
&=&q_{\mathfrak{g}}\big([\beta(x_1),\cdots,\alpha(x_n)]_\mathfrak{g}+\theta(\beta(x_1),\cdots,\alpha(x_n))\\&+&\ad^*(\beta(x_1),\cdots,\beta(x_{n-1}))(f_n\circ\alpha)\\&+&
\displaystyle{\sum_{i=1}^{n-1}}(-1)^{n-i}\ad^{*}(\beta(x_1),\cdots,\widehat{\beta(x_i)},\cdots,\beta(x_{n-1}),\alpha^{-1}\beta(\alpha(x_n)))
(\tilde{\alpha}\tilde{\beta}^{-1}(f_{i}\circ\beta),\\
&&\alpha(x_{n+1})+f_{n+1}\circ\alpha\big)
+q_{\mathfrak{g}}\big(\alpha(x_n)+f_n\circ\alpha,[\beta(x_1),\cdots,\alpha(x_n)]_\mathfrak{g}+\theta(\beta(x_1),\cdots,\alpha(x_n))\\
&+&\ad^*(\beta(x_1),\cdots,\beta(x_{n-1}))(f_{n+1}\circ\alpha)\\&+&\displaystyle{\sum_{i=1}^{n-1}}(-1)^{n-i}\ad^{*}(\beta(x_1),\cdots,\widehat{\beta(x_i)},\cdots,\beta(x_{n-1}),\alpha^{-1}\beta(\alpha(x_{n+1})))
(\tilde{\alpha}\tilde{\beta}^{-1}(f_{i}\circ\beta)\\
&=&\theta(\beta(x_1),\cdots,\alpha(x_n))(\alpha(x_{n+1}))\\&-&
f_n\circ\alpha[\beta(x_1),\cdots,\beta(x_{n-1}),\alpha(x_{n+1})]_\mathfrak{g}\\&-&
\displaystyle{\sum_{i=1}^{n-1}}(-1)^{n-i}f_i\alpha[\beta(x_1),\cdots,\widehat{\beta(x_i)},\cdots,\beta(x_{n-1}),\beta(x_n),\alpha(x_{n+1})]_\mathfrak{g}\\&+&f_{n+1}\circ
\alpha[\beta(x_1),\cdots,\beta(x_{n-1}),\alpha(x_n)]_\mathfrak{g}
\\
&+&\theta(\beta(x_1),\cdots,\beta(x_{n-1}),\alpha(x_{n+1}))(\alpha(x_{n}))\\&-&
f_{n+1}\circ\alpha[\beta(x_1),\cdots,\beta(x_{n-1}),\alpha(x_{n})]_\mathfrak{g}\\&-&
\displaystyle{\sum_{i=1}^{n-1}}(-1)^{n-i}f_i\alpha[\beta(x_1),\cdots,\widehat{\beta(x_i)},\cdots,\beta(x_{n-1}),\beta(x_{n+1}),\alpha(x_{n})]_\mathfrak{g}\\&+&f_n\circ\alpha[\beta(x_1),\cdots,\beta(x_{n-1}),\alpha(x_{n+1})]_\mathfrak{g}\\
&=&\theta(\beta(x_1),\cdots,\alpha(x_n))(\alpha(x_{n+1}))+
\theta(\beta(x_1),\cdots,\beta(x_{n-1}),\alpha(x_{n+1}))(\alpha(x_{n}))
\end{eqnarray*}
which implies
\begin{eqnarray*}
&&q_{\mathfrak{g}}\big([(\beta+\tilde{\beta})(x_1+f_1),\cdots,(\beta+\tilde{\beta})(x_{n-1}+f_{n-1}),(\alpha+\tilde{\alpha})(x_{n}+f_{n})]_\theta, (\alpha+\tilde{\alpha})(x_{n+1}+f_{n+1})\big)\\
&&+q_{\mathfrak{g}}\big((\alpha+\tilde{\alpha})(x_{n-1}+f_{n-1}),[(\beta+\tilde{\beta})(x_1+f_1),\cdots,(\beta+\tilde{\beta})(x_{n-1}+f_{n-1}) (\alpha+\tilde{\alpha})(x_{n+1}+f_{n+1})]_\theta\big)=0
\end{eqnarray*}
if and only if
$\theta(\beta(x_1),\cdots,\alpha(x_n))(\alpha(x_{n+1}))+
\theta(\beta(x_1),\cdots,\beta(x_{n-1}),\alpha(x_{n+1}))(\alpha(x_{n}))
=0$.

Hence the lemma follows.
\end{proof}

Now, we shall call the quadratic $n$-BiHom-Lie algebra $(\mathfrak{g}\oplus \mathfrak{g}^{*},q_\mathfrak{g},\alpha+\tilde{\alpha},\beta+\tilde{\beta})$ the $T^*_\theta$-extension of $\mathfrak{g}$ (by $\theta$) and denote by $T_\theta^*(\mathfrak{g})$.

\begin{lem}\label{lemma3.1}
Let $(\mathfrak{g},q_\mathfrak{g},\alpha,\beta)$ be a $2m$-dimensional quadratic $n$-BiHom-Lie algebra over a field $\mathbb{K}$ $(ch\mathbb{K}\neq2)$, $\alpha$ be surjective and $I$ be an isotropic $m$-dimensional subspace of $\mathfrak{g}$. If $I$ is a BiHom-ideal of $(\mathfrak{g},[\cdot,\cdots,\cdot]_{\mathfrak{g}},\alpha,\beta)$, then $$[\beta(I),\beta(\mathfrak{g}),\cdots,\beta(\mathfrak{g}),\alpha(I)]_{\mathfrak{g}}=0$$.
\end{lem}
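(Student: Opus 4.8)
The plan is to prove the claim by a duality argument: since $q_\mathfrak{g}$ is nondegenerate, it suffices to show that every element of the subspace $[\beta(I),\beta(\mathfrak{g}),\cdots,\beta(\mathfrak{g}),\alpha(I)]_\mathfrak{g}$ is orthogonal to all of $\mathfrak{g}$. Fix $a,b\in I$ and $z_1,\dots,z_{n-2}\in\mathfrak{g}$, and consider a generic generator $w:=[\beta(a),\beta(z_1),\dots,\beta(z_{n-2}),\alpha(b)]_\mathfrak{g}$. Because $\alpha$ is surjective, any target vector can be written as $\alpha(c)$ for some $c\in\mathfrak{g}$, so it is enough to check that $q_\mathfrak{g}(w,\alpha(c))=0$ for all $c\in\mathfrak{g}$.

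First I would record the structural consequences of the hypotheses on $I$. Since $I$ is a BiHom-ideal we have $\beta(a)\in\beta(I)\subseteq I$ and $\alpha(b)\in\alpha(I)\subseteq I$; moreover the ideal property $[I,\mathfrak{g},\cdots,\mathfrak{g}]_\mathfrak{g}\subseteq I$ guarantees that any bracket whose first entry lies in $I$ again lies in $I$. The dimension count $\dim I=m=\tfrac12\dim\mathfrak{g}$ together with isotropy forces $I=I^{\bot}$, but for this lemma only the weaker fact $q_\mathfrak{g}(I,I)=0$ is actually needed.

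The key step is to apply the $\alpha\beta$-invariance of $q_\mathfrak{g}$ to transfer the outer bracket onto the last slot. The generator $w$ has exactly the shape required by the invariance identity, since its first $n-1$ entries carry $\beta$ and its last entry carries $\alpha$, so
\[
q_\mathfrak{g}(w,\alpha(c))=-\,q_\mathfrak{g}\big(\alpha(b),[\beta(a),\beta(z_1),\dots,\beta(z_{n-2}),\alpha(c)]_\mathfrak{g}\big).
\]
Now both arguments on the right lie in $I$: the first because $\alpha(b)\in\alpha(I)\subseteq I$, and the second because its leading entry $\beta(a)$ lies in $I$ and $I$ is an ideal. Isotropy of $I$ then gives $q_\mathfrak{g}(I,I)=0$, so the right-hand side vanishes.

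Thus $q_\mathfrak{g}(w,\alpha(c))=0$ for every $c\in\mathfrak{g}$, and surjectivity of $\alpha$ upgrades this to $q_\mathfrak{g}(w,\mathfrak{g})=0$; nondegeneracy of $q_\mathfrak{g}$ forces $w=0$. As $w$ was an arbitrary generator of $[\beta(I),\beta(\mathfrak{g}),\cdots,\beta(\mathfrak{g}),\alpha(I)]_\mathfrak{g}$, the subspace itself is zero. The only point requiring care is the bookkeeping in the invariance step, namely confirming that the generator already displays the pattern $\beta,\dots,\beta,\alpha$ so that the invariance identity applies verbatim without first reshuffling entries via BiHom-skewsymmetry; no genuine obstacle arises beyond this.
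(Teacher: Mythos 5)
Your proof is correct and follows essentially the same route as the paper: apply the $\alpha\beta$-invariance of $q_\mathfrak{g}$ to move the bracket onto the $I$-entry, use the BiHom-ideal property and isotropy to make the pairing vanish, then conclude from surjectivity of $\alpha$ and nondegeneracy of $q_\mathfrak{g}$. The only (minor) difference is that you argue element-wise on generators and use only $q_\mathfrak{g}(I,I)=0$, whereas the paper first upgrades isotropy to $I=I^{\bot}$ via the dimension count and phrases the same computation at the level of subspaces.
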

\begin{proof}
Since dim$I$+dim$I^{\bot}=m+\dim I^{\bot}=2m$ and $I\subseteq I^{\bot}$, we have $I=I^{\bot}$.
If $I$ is a BiHom-ideal of $(\mathfrak{g},[\cdot,\cdots,\cdot],\alpha,\beta)$, then \begin{eqnarray*}
q_\mathfrak{g}([\beta(I),\beta(\mathfrak{g}),\cdots,\beta(\mathfrak{g}),\alpha(I^{\bot})]_{\mathfrak{g}},\alpha(\mathfrak{g}))&=&
-q_\mathfrak{g}(\alpha(I^{\bot}),[\beta(I),\beta(\mathfrak{g}),\cdots,\alpha(\mathfrak{g})]_{\mathfrak{g}})\\
&\subseteq&q_\mathfrak{g}(\alpha(I^{\bot}),[I,\beta(\mathfrak{g}),\cdots,\alpha(\mathfrak{g})]_{\mathfrak{g}})\\
&\subseteq&q_\mathfrak{g}(I^{\bot},I)=0,
\end{eqnarray*}
which implies $[\beta(I),\beta(\mathfrak{g}),\cdots,\beta(\mathfrak{g}),\alpha(I)]_{\mathfrak{g}}=[\beta(I),\beta(\mathfrak{g}),\cdots,\beta(\mathfrak{g}),\alpha(I^{\bot})]_{\mathfrak{g}}\subseteq \alpha(\mathfrak{g})^{\bot}=\mathfrak{g}^{\bot}=0$.
\end{proof}

\begin{thm}
Let $(\mathfrak{g},q_\mathfrak{g},\alpha,\beta)$ be a quadratic regular $n$-BiHom-Lie algebra of dimensional $2m$ over a field $\mathbb{K}$ $(ch\mathbb{K}\neq2)$. Then $(\mathfrak{g},q_\mathfrak{g},\alpha,\beta)$ is isometric to a $T^{*}_\theta$-extension $(T_{\theta}^{*}(B),q_{B},\alpha^{'},\beta^{'})$ if and only if $(\mathfrak{g},[\cdot,\cdots,\cdot]_{\mathfrak{g}},\alpha,\beta)$ contains an isotropic BiHom-ideal $I$ of dimension $m$. In particular, $B\cong \mathfrak{g}/I$.
\end{thm}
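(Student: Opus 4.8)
The plan is to prove the two implications separately, with the forward (``only if'') direction being short and the converse carrying all the weight.

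For the ``only if'' direction I would start from an isometry $\phi:(\mathfrak{g},q_\mathfrak{g})\to(T^*_\theta(B),q_B)$ and exhibit the required ideal inside $T^*_\theta(B)=B\oplus B^*$. The subspace $\{0\}\oplus B^*$ is a BiHom-ideal: it is stable under $\alpha'$ and $\beta'$, and by the formula for $[\cdot,\cdots,\cdot]_\theta$ in Proposition \ref{prop1} every bracket having at least one argument in $B^*$ lands in $B^*$. It is isotropic for $q_B$ because $q_B(0+f,0+g)=f(0)+g(0)=0$, and it has dimension $\dim B^*=\dim B=m$. Hence $I:=\phi^{-1}(\{0\}\oplus B^*)$ is an isotropic BiHom-ideal of $\mathfrak{g}$ of dimension $m$.

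For the converse, assume $\mathfrak{g}$ contains an isotropic BiHom-ideal $I$ with $\dim I=m$. First I would invoke Lemma \ref{lemma3.1}: since $\dim I=\tfrac12\dim\mathfrak{g}$ and $I\subseteq I^{\perp}$ we get $I=I^{\perp}$, and $[\beta(I),\beta(\mathfrak{g}),\cdots,\beta(\mathfrak{g}),\alpha(I)]_{\mathfrak{g}}=0$. Set $B:=\mathfrak{g}/I$; because $I$ is $\alpha$- and $\beta$-invariant the maps descend to $\bar\alpha,\bar\beta$ and $(B,[\cdot,\cdots,\cdot]_B,\bar\alpha,\bar\beta)$ is an $n$-BiHom-Lie algebra. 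The core of the construction is to choose a complement with $\mathfrak{g}=B_0\oplus I$ that is simultaneously \emph{isotropic} (a Lagrangian complement, which exists in characteristic $\neq 2$) and \emph{invariant} under both $\alpha$ and $\beta$, and to identify $B_0$ with $B$ via the projection. Using $q_\mathfrak{g}$ and $I=I^{\perp}$, define $\gamma:I\to B^*$ by $\gamma(i)(\bar b)=q_\mathfrak{g}(i,b_0)$, where $b_0\in B_0$ lifts $\bar b$; this is well defined (the ambiguity of the lift lies in $I=I^\perp$) and, by nondegeneracy, a linear isomorphism. Since $\alpha,\beta$ are $q_\mathfrak{g}$-symmetric and $B_0$ is invariant, a short computation gives $\gamma\circ(\alpha|_I)=\widetilde{\bar\alpha}\circ\gamma$ and $\gamma\circ(\beta|_I)=\widetilde{\bar\beta}\circ\gamma$, so $\gamma$ intertwines the twisting data.

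Next I would define the candidate cocycle by $\theta(\bar b_1,\dots,\bar b_n)=\gamma\big(p_I[b_1,\dots,b_n]_\mathfrak{g}\big)$ for $b_i\in B_0$, where $p_I$ is the projection onto $I$ along $B_0$; equivalently, using isotropy of $B_0$,
\[
\theta(\bar b_1,\dots,\bar b_n)(\bar c)=q_\mathfrak{g}\big([b_1,\dots,b_n]_\mathfrak{g},c_0\big).
\]
One then checks that $\theta$ satisfies the four defining conditions of an $n$-cocycle associated with $\ad^*$: (1)--(2) follow from invariance of $I$ and compatibility of $\alpha,\beta$ with the bracket, the skewsymmetry (3) from the BiHom-skewsymmetry of $[\cdot,\cdots,\cdot]_\mathfrak{g}$, and the cocycle identity (4) is the $n$-BiHom-Jacobi identity read modulo $I$ and paired against $c_0$. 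The symmetry relation of Lemma \ref{lemma3.2} is then immediate from $\alpha\beta$-invariance and symmetry of $q_\mathfrak{g}$, which give
\[
\theta(\beta(x_1),\dots,\alpha(x_n))(\alpha(x_{n+1}))=-\theta(\beta(x_1),\dots,\alpha(x_{n+1}))(\alpha(x_n)).
\]
Finally, define $\Phi:\mathfrak{g}=B_0\oplus I\to T^*_\theta(B)=B\oplus B^*$ by $\Phi(b_0+i)=\bar b_0+\gamma(i)$. It is a linear isomorphism; it intertwines $\alpha,\beta$ with $\bar\alpha+\widetilde{\bar\alpha},\bar\beta+\widetilde{\bar\beta}$ exactly because $B_0$ is invariant and $\gamma$ intertwines the twists; it sends the bracket to $[\cdot,\cdots,\cdot]_\theta$ by the definition of $\theta$ together with the fact that the contributions of the $I$-components pass through $\gamma$ to $\ad^*$-terms (again via $\alpha\beta$-invariance); and it is an isometry, since for $u=b_0+i$, $v=c_0+j$ the isotropy of $B_0$ and $I$ leaves $q_\mathfrak{g}(u,v)=q_\mathfrak{g}(i,c_0)+q_\mathfrak{g}(j,b_0)=\gamma(i)(\bar c_0)+\gamma(j)(\bar b_0)=q_B(\Phi u,\Phi v)$. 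The clause $B\cong\mathfrak{g}/I$ holds by construction.

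The main obstacle is the existence of $B_0$: a complement of $I$ that is at once Lagrangian and invariant under the commuting, bijective, $q_\mathfrak{g}$-symmetric maps $\alpha$ and $\beta$. I would attack it by a linear-algebra argument: the Lagrangian complements of $I$ form a nonempty affine space over $\mathbb{K}$ (parametrized by symmetric maps under the pairing $I\times B\to\mathbb{K}$), on which $\alpha$ and $\beta$ act affinely, and one seeks a common fixed point; producing such a fixed point, using self-adjointness and invertibility of $\alpha,\beta$ over a field of characteristic $0$, is the delicate point. Once $B_0$ is secured, the remaining verifications — most notably the full cocycle identity (4), which is the most calculation-heavy step — are forced term-by-term by translating the identities of $\mathfrak{g}$ through $q_\mathfrak{g}$ and $\gamma$.
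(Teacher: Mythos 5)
Your ``only if'' direction is correct and is exactly the paper's argument (the copy of $B^{*}$ inside $T^{*}_{\theta}(B)$ is an $m$-dimensional isotropic BiHom-ideal, and you pull it back through the isometry). The genuine gap is in the converse, and it sits precisely at the point you yourself flag and then defer: the existence of a complement $B_{0}$ of $I$ that is simultaneously isotropic and invariant under $\alpha$ and $\beta$. You are right that invariance is indispensable for your map $\Phi(b_{0}+i)=\bar{b}_{0}+\gamma(i)$ to intertwine $\alpha,\beta$ with $\bar\alpha+\widetilde{\bar\alpha},\bar\beta+\widetilde{\bar\beta}$, and for $\theta$ to satisfy the equivariance conditions of a cocycle; but a proposal whose central lemma is left as ``the delicate point I would attack'' is not a proof, and worse, that lemma is \emph{false}. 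Take $\mathfrak{g}=\mathbb{K}e\oplus\mathbb{K}f$ with the zero $n$-ary bracket, $q_{\mathfrak{g}}(e,e)=q_{\mathfrak{g}}(f,f)=0$, $q_{\mathfrak{g}}(e,f)=1$, and $\alpha=\beta$ given by $\alpha(e)=e$, $\alpha(f)=e+f$. This is a quadratic regular $n$-BiHom-Lie algebra ($\alpha$ is bijective, $q_{\mathfrak{g}}$-symmetric, and all identities hold trivially), and $I=\mathbb{K}e$ is an isotropic BiHom-ideal with $\dim I=m=1$. The isotropic complements of $I$ are the lines $B_{0}^{c}=\mathbb{K}(f+ce)$, and $\alpha(f+ce)=f+(c+1)e$, so $\alpha$ acts on the affine family of Lagrangian complements by the translation $c\mapsto c+1$, which has no fixed point. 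Your proposed affine fixed-point argument therefore cannot succeed, in characteristic $0$ or any other.

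In fact this example kills not just the method but the stated theorem, so no hypothesis-free repair exists. Here $B$ would have to be one-dimensional, and for any one-dimensional $B$ with twist $\alpha_{B}=a\,\mathrm{id}_{B}$ the extension twist is $\alpha_{B}+\widetilde{\alpha}_{B}=a\,\mathrm{id}_{B\oplus B^{*}}$; an isometry in the paper's sense is in particular an algebra morphism, hence satisfies $\phi\circ\alpha=(\alpha_{B}+\widetilde{\alpha}_{B})\circ\phi$, which would force $\alpha=a\,\mathrm{id}_{\mathfrak{g}}$ — false for the $\alpha$ above. You should also know that the paper's own proof contains the same gap, only hidden: it chooses an isotropic complement $B_{0}$ with no invariance property and then computes $\varphi\alpha(x+i)=\overline{\alpha(x)}+q_{\mathfrak{g}}^{*}(\alpha(i))$, which tacitly uses $\alpha(x)\in B_{0}$ for $x\in B_{0}$ (and likewise for $\beta$, and again in the equivariance of its $\theta$). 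So your instinct to isolate the invariant complement as the crux was exactly right — in the classical case $\alpha=\beta=\mathrm{id}$ (Bordemann's theorem) the difficulty evaporates because every complement is invariant — but as written your proof is incomplete, and the converse implication needs an added hypothesis (for instance, that $\alpha$ and $\beta$ preserve some Lagrangian complement of $I$) before any argument along these lines can close.
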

\begin{proof}
($\Longrightarrow$) Suppose $\phi:B\oplus B^*\rightarrow \mathfrak{g}$ is isometric, we have $\phi(B^*)$ is a $m$-dimensional isotropic BiHom-ideal of $\mathfrak{g}$. In fact, since $\phi$ is isometric, ${\rm dim}B\oplus B^*={\rm dim}\mathfrak{g}=2m$, which implies ${\rm dim}B^*={\rm dim}\phi(B^*)=m$. And $0=q_B(B^*,B^*)=q_\mathfrak{g}(\phi(B^*),\phi(B^*))$, we have $\phi(B^*)\subseteq\phi(B^*)^{\bot}$. By $[\phi(B^*),\mathfrak{g},\cdots,\mathfrak{g}]=[\phi(B^*),\phi(B\oplus B^*),\cdots,\phi(B\oplus B^*)]=\phi([B^*,B\oplus B^*,\cdots,B\oplus B^*]_\theta)\subseteq\phi(B^*)$, then $\phi(B^*)$ is a BiHom-ideal of $\mathfrak{g}$. Furthermore, $B\cong B\oplus B^*/B^*\cong \mathfrak{g}/\phi(B^*)$.

($\Longleftarrow$) Suppose that $I$ is a $m$-dimensional isotropic BiHom-ideal of $\mathfrak{g}$. By Lemma \ref{lemma3.1},  $[\beta(I),\beta(\mathfrak{g}),\cdots,\beta(\mathfrak{g}),\alpha(I)]=0$. Let $B=\mathfrak{g}/I$ and $p: \mathfrak{g} \rightarrow B$ be the canonical projection. We can choose an isotropic complement subspace $B_{0}$ to $I$ in $\mathfrak{g}$, i.e. $\mathfrak{g}=B_{0}\dotplus I$ and $B_{0}\subseteq B_{0}^{\bot}$. Then $B_{0}^{\bot}=B_{0}$ since dim$B_0=m$.

Denote by $p_{0}$ (resp. $p_{1}$) the projection $\mathfrak{g}=B_{0}\dotplus I \rightarrow B_{0}$ (resp. $\mathfrak{g}=B_{0}\dotplus I\rightarrow I$) and let $q_{\mathfrak{g}}^{*}:I \rightarrow B^{*}$ is a linear map, where $q_{\mathfrak{g}}^{*}(i)(\bar{x}):= q_{\mathfrak{g}}(i,x),~\forall\, i\in I, \bar{x}\in B=\mathfrak{g}/I$.
 We claim that $q_{\mathfrak{g}}^{*}$ is a vector space isomorphism. In fact, if $\bar{x}=\bar{y}$, then $x-y\in I$, hence $q_{\mathfrak{g}}(i,x-y)\in q_{\mathfrak{g}}(I,I)=0$ and
 so $q_{\mathfrak{g}}(i,x)=q_{\mathfrak{g}}(i,y)$, which implies $q_{\mathfrak{g}}^{*}$ is well-defined and it is easy to see that $q_{\mathfrak{g}}^{*}$ is linear. If
 $q_{\mathfrak{g}}^{*}(i)=q_{\mathfrak{g}}^{*}(j)$, then $q_{\mathfrak{g}}^{*}(i)(\bar{x})=q_{\mathfrak{g}}^{*}(j)(\bar{x}), ~\forall\, x\in \mathfrak{g}$, i.e. $q_{\mathfrak{g}}(i,x)=q_{\mathfrak{g}}(j,x)$,
 which implies $i-j\in \mathfrak{g}^\bot=0$, hence $q_{\mathfrak{g}}^{*}$ is injective. Note that $\dim I=\dim B^*=m$, then $q_{\mathfrak{g}}^{*}$ is surjective.

In addition, $q_{\mathfrak{g}}^{*}$ has the following property, $\forall\, x_{1},\cdots,x_{n}\in \mathfrak{g}, i\in I$,
\begin{eqnarray*}
q_{\mathfrak{g}}^{*}([\beta(x_{1}),\cdots,\beta(x_{n-1}),\alpha(i)]_{\mathfrak{g}})(\bar{\alpha}(\bar{x_{n}}))
&=&q_{\mathfrak{g}}([\beta(x_{1}),\cdots,\beta(x_{n-1}),\alpha(i)]_{\mathfrak{g}},\alpha(x_{n+1}))\\
&=&-q_{\mathfrak{g}}(\alpha(i),[\beta(x_{1}),\cdots,\beta(x_{n-1}),\alpha(x_{n})])\\
&=&-q_{\mathfrak{g}}^{*}(\alpha(i))(\overline{[\beta(x_1),\cdots,\beta(x_{n-1}),\alpha(x_{n})]_{\mathfrak{g}}})\\
&=&-q_{\mathfrak{g}}^{*}(\alpha(i))([\overline{\beta(x_1)},\cdots,\overline{\beta(x_{n-1})},\overline{\alpha(x_{n})}]_{\mathfrak{g}})\\
&=&-q_{\mathfrak{g}}^{*}(\alpha(i))\ad(\overline{\beta(x_1)},\cdots,\overline{\beta(x_{n-1})})(\overline{\alpha(x_n)})\\
&=&\ad^*(\overline{\beta(x_1)},\cdots,\overline{\beta(x_{n-1})})q_{\mathfrak{g}}^{*}(\alpha(i))(\overline{\alpha(x_n)}).
\end{eqnarray*}
A similar computation shows that
$$q_{\mathfrak{g}}^{*}([\beta(x_{1}),\cdots,\beta(x_{k-1}),\beta(i),\beta(x_{k}),\cdots,\alpha(x_{n-1})]_{\mathfrak{g}})=-\ad^*(\overline{\beta(x_1)},\cdots,\overline{\beta(x_{n-1})})
q_{\mathfrak{g}}^{*}(\alpha(i)),$$
Define an $n$-linear map
\begin{eqnarray*}
\theta:~~~B\times \cdots\times B&\longrightarrow&B^{*}\\
(\bar{b_1},\cdots,\bar{b_n})&\longmapsto&q_{\mathfrak{g}}^{*}(p_{1}([b_1,\cdots,b_n]_{\mathfrak{g}})),
\end{eqnarray*}
where $b_1,\cdots,b_n\in B_{0}.$ Then $\theta$ is well-defined since $p|_{B_0}$ is a vector space isomorphism.

Now define the bracket $[\cdot,\cdots,\cdot]_\theta$ on $B\oplus B^*$ by Proposition \ref{prop1}, we have $B\oplus B^*$
is a algebra. Let $\varphi:\mathfrak{g} \rightarrow B\oplus B^{*}$ be a linear map defined by $\varphi(x+i)=\bar{x}+q_{\mathfrak{g}}^{*}(i),~ \forall\, x+i\in B_0\dotplus I=\mathfrak{g}. $ Since $p|_{B_0}$ and $q_{\mathfrak{g}}^{*}$ are vector space isomorphisms, $\varphi$ is also a vector space isomorphism. Note that $\varphi\alpha(x+i)=\varphi(\alpha(x)+\alpha(i))=\overline{\alpha(x)}+q_{\mathfrak{g}}^{*}(\alpha(i))=\overline{\alpha(x)}+q_{\mathfrak{g}}^{*}(i)\bar{\alpha}
 =(\bar{\alpha}+\tilde{\bar{\alpha}})(\bar{x}+q_{\mathfrak{g}}^{*}(i))=(\bar{\alpha}+\tilde{\bar{\alpha}})\varphi(x+i)$, i.e. $\varphi\alpha=(\bar{\alpha}+\tilde{\bar{\alpha}})\varphi$. By the same way, $\varphi\beta=(\bar{\beta}+\tilde{\bar{\beta}})\varphi$.

Furthermore, $\forall\, x_i\in \mathfrak{g}$, $z_i\in I$,
$$\begin{array}{lllllll}
&&\varphi([\beta(x_1+z_1),\cdots,\beta(x_{n-1}+z_{n-1}),\alpha(x_n+z_n)]_{\mathfrak{g}})=\varphi\Big([\beta(x_1),\cdots,\beta(x_{n-1}),\alpha(x_n)]_{\mathfrak{g}}\\&&
+\sum_{k=1}^n[\beta(x_1),\cdots,\beta(z_k),\cdots,\alpha(x_n)]_{\mathfrak{g}}\Big)\\
&=&\varphi\Big(p_{0}([\beta(x_1),\cdots,\beta(x_{n-1}),\alpha(x_n)]_{\mathfrak{g}})+p_1([\beta(x_1),\cdots,\beta(x_{n-1}),\alpha(x_n)]_{\mathfrak{g}})+\sum_{k=1}^n[\beta(x_1),
\cdots,\\&&\beta(z_k),\cdots,\alpha(x_n)]_{\mathfrak{g}}\Big)\\
&=&\overline{[x_1,\cdots,x_n]_{\mathfrak{g}}}+f^*_1\left(p_1([\beta(x_1),\cdots,\beta(x_{n-1}),\alpha(x_n)]_{\mathfrak{g}})
+\sum_{k=1}^n[\beta(x_1),\cdots,\beta(z_k),\cdots,\alpha(x_n)]_{\mathfrak{g}}\right)\\
&=&[\overline{\beta(x_1)},\cdots,\overline{\beta(x_{n-1})},\overline{\alpha(x_n)}]_{\mathfrak{g}_1}+\theta(\overline{\beta(x_1)},\cdots,,\beta(\overline{x_{n-1}}),\overline{\alpha(x_n)})\\
&+&\sum_{k=1}^n(-1)^{n-k}\ad^*(\overline{\beta(x_1)},\cdots,\widehat{\overline{\beta(x_k)}},\cdots,\overline{\beta(x_{n-1})},\overline{\beta(x_n)})q_{\mathfrak{g}}^*(\alpha(z_k))\\
&=&[\overline{\beta(x_1)}+q_{\mathfrak{g}}^*(\beta(z_1)),\cdots,\overline{\beta(x_{n-1})}+q_{\mathfrak{g}}^*(\beta(z_{n-1})),\overline{\alpha(x_n))}+q_{\mathfrak{g}}^*(\alpha(z_n))]_{\theta}\\
&=&[\varphi(\beta(x_1)+\beta(z_1)),\cdots,\varphi(\beta(x_{n-1})+\beta(z_{n-1})),\varphi(\alpha(x_n)+\alpha(z_n))]_{\theta},
\end{array}$$
Then $\varphi$ is an isomorphism of algebras, and $(B\oplus B^{*},[\cdot,\cdots,\cdot]_\theta, \bar{\alpha}+\tilde{\bar{\alpha}},\bar{\beta}+\tilde{\bar{\beta}})$ is an $n$-BiHom-Lie algebra.
Furthermore, we have
{\setlength\arraycolsep{2pt}
\begin{eqnarray*}
q_{B}(\varphi(x+i),\varphi(y+j))&=&q_{B}(\bar{x}+q_{\mathfrak{g}}^{*}(i),\bar{y}+q_{\mathfrak{g}}^{*}(j))\\
&=&q_{\mathfrak{g}}^{*}(i)(\bar{y})+q_{\mathfrak{g}}^{*}(j)(\bar{x})\\
&=&q_{\mathfrak{g}}(i,y)+q_{\mathfrak{g}}(j,x)\\
&=&q_{\mathfrak{g}}(x+i,y+j),
\end{eqnarray*}}
then $\varphi$ is isometric. And $\forall\, x_i\in \mathfrak{g}$, the relation
\begin{eqnarray*}
&&q_{B}([(\bar{\beta}+\tilde{\bar{\beta}})(\varphi(x_{1}),\cdots,(\bar{\beta}+\tilde{\bar{\beta}})(\varphi(x_{n-1})),(\bar{\alpha}+\tilde{\bar{\alpha}})(\varphi(x_{n}))]_\theta,(\bar{\alpha}+\tilde{\bar{\alpha}})(\varphi(x_{n+1})))\\
&=&q_{B}([\varphi(\beta(x_{1})),\cdots,\varphi(\beta(x_{n-1})),\varphi(\alpha(x_{n}))]_\theta,\varphi(\alpha(x_{n+1})))\\&=&q_B(\varphi([\beta(x_1),\cdots,\beta(x_{n-1}),\alpha(x_n)]),\varphi(\alpha(x_{n+1})))\\
&=&q_{\mathfrak{g}}([\beta(x_1),\cdots,\beta(x_{n-1}),\alpha(x_n)],\alpha(x_{n+1}))\\&=&-q_{\mathfrak{g}}(\alpha(x_{n}),[\beta(x_1),\cdots,\beta(x_{n-1}),\alpha(x_{n+1})])\\
&=&-q_{B}(\varphi(\alpha(x_n)),[\varphi(\beta(x_1)),\cdots,\varphi(\beta(x_{n-1})),\varphi(\alpha(x_{n+1}))]_\theta)\\
&=&-q_{B}((\bar{\beta}+\tilde{\bar{\beta}})(\varphi(x_n)),[(\bar{\beta}+\tilde{\bar{\beta}})(\varphi(x_1)),\cdots,(\bar{\beta}+\tilde{\bar{\beta}})(\varphi(x_{n-1})),(\bar{\alpha}+\tilde{\bar{\alpha}})(\varphi(x_{n+1}))]_\theta),
\end{eqnarray*}
 which implies that $q_B$ is $\alpha\beta$-invariant. So $(B\oplus B^{*}, q_B,\bar{\beta}+\tilde{\bar{\beta}},\bar{\alpha}+\tilde{\bar{\alpha}})$ is a quadratic $n$-BiHom-Lie algebra.
Thus, the $T_\theta^*$-extension $(B\oplus B^{*}, q_B,\bar{\beta}+\tilde{\bar{\beta}},\bar{\alpha}+\tilde{\bar{\alpha}})$ of $B$ is isometric to $(\mathfrak{g},q_{\mathfrak{g}},\alpha,\beta)$.
\end{proof}
\section{Deformations of $n$-BiHom-Lie algebras}
\begin{defn} Let $(\mathfrak{g}, [\cdot,\cdots,\cdot]_{\mathfrak{g}}, \alpha,\beta)$ be an $n$-BiHom-Lie algebra and $(V,\rho,\alpha_{V},\beta_{V})$ be a representation of $\mathfrak{g}$.
An $m$-cochain is an $(m+1)$-linear map $$f: \underbrace{\wedge^{n-1}\mathfrak{g}\otimes\cdots\otimes\wedge^{n-1}\mathfrak{g}}_m\wedge\mathfrak{g}\longrightarrow V$$
such that $$\begin{array}{llllll}\alpha_{V}\circ f(X_{1},X_{2},\cdots,X_{m},z)&=&f(\alpha(X_{1}),\alpha(X_{2}),\cdots,\alpha(X_{m}),\alpha(z))\\
\beta_{V}\circ f(X_{1},X_{2},\cdots,X_{m},z)&=&f(\beta(X_{1}),\beta(X_{2}),\cdots,\beta(X_{m}),\beta(z))
\end{array}$$ for all $X_{1},X_{2},\cdots,X_{m}\in \wedge^{n-1}\mathfrak{g}$ and $z\in \mathfrak{g}.$ We denote the set of $m$-cochain by $C^m(\mathfrak{g}, V).$
\end{defn}
\begin{defn}
For $m=1,2$, the coboundary operator $\delta^{m}: C^m(\mathfrak{g},\mathfrak{g}) \longrightarrow C^{m+1}(\mathfrak{g}, \mathfrak{g})$ is defined as follows:

For all $X=x_1\wedge\cdots\wedge x_{n-1},~Y=y_1\wedge\cdots\wedge y_{n-1}\in\wedge^{n-1}\mathfrak{g},~x_{n},y_{n}\in\mathfrak{g}$, we have
\begin{eqnarray}&&\delta^{1}f(X,x_{n})=-f([x_{1},\cdots,x_{n}]_{\mathfrak{g}})+\sum_{i=1}^{n}[x_{1},\cdots,f(x_{i}),\cdots,x_{n}]_{\mathfrak{g}}\label{isma}\\[0.2cm]
&&\delta^{2}f(X,Y,y_{n})=[\beta^{2}(x_{1}),\ldots,\beta^{2}(x_{n-1}),f(\beta(y_{1}),\ldots,\beta(y_{n-1}),\alpha(y_{n}))]_{\mathfrak{g}} \nonumber\\
&&+f(\beta^{2}(x_{1}),\ldots,\beta^{2}(x_{n-1}),[\beta(y_{1}),\ldots,\beta(y_{n-1}),\alpha(y_{n})]_{\mathfrak{g}})\nonumber\\
&&- \displaystyle{\sum_{i=1}^{n}}(-1)^{n-i}\Big([\beta^{2}(y_{1}),\ldots,\widehat{\beta^{2}(y_{i})},\ldots,\beta^{2}(y_{n}),f(\beta(x_{1}),\ldots, \beta(x_{n-1}),\alpha(y_{i}))]_{\mathfrak{g}}\nonumber\\&&+f(\beta^{2}(y_{1}),\ldots,\widehat{\beta^{2}(y_{i})},\ldots,\beta^{2}(y_{n}),[\beta(x_{1}),\ldots, \beta(x_{n-1}),\alpha(y_{i})]_{\mathfrak{g}})\Big)\label{6}.
\end{eqnarray}
\end{defn}
\begin{lem}
With respect to the above notations, for $f\in C^m(\mathfrak{g},\mathfrak{g})$ ( $m=1,2$), and by the multiplicative property, we have
$$\begin{array}{lllllll}\delta^{m} f\circ\alpha &=&\alpha\circ\delta^{m}f,\\
\delta^{m} f\circ\beta&=&\beta\circ\delta^{m}f,~~m=1,2.\end{array}$$
Thus the map $\delta^{m}: C^m(\mathfrak{g},\mathfrak{g}) \longrightarrow C^{m+1}(\mathfrak{g}, \mathfrak{g})$ is well defined.
\end{lem}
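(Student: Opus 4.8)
The plan is to verify directly that the output $\delta^m f$ again satisfies the two cochain (multiplicative) conditions defining $C^{m+1}(\mathfrak{g},\mathfrak{g})$, namely $\delta^m f\circ\alpha=\alpha\circ\delta^m f$ and $\delta^m f\circ\beta=\beta\circ\delta^m f$, where $\alpha$ (resp. $\beta$) acts on a tuple argument-wise, i.e. through $\widetilde{\alpha}$ (resp. $\widetilde{\beta}$) on each $\wedge^{n-1}\mathfrak{g}$-slot and through $\alpha$ (resp. $\beta$) on the remaining $\mathfrak{g}$-slot. Once these two identities hold, $\delta^m f$ is by definition an element of $C^{m+1}(\mathfrak{g},\mathfrak{g})$, so $\delta^m$ is well defined. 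The computation rests on only three facts: (i) $\alpha$ and $\beta$ are algebra morphisms, so $\alpha([x_1,\cdots,x_n]_{\mathfrak{g}})=[\alpha(x_1),\cdots,\alpha(x_n)]_{\mathfrak{g}}$ and likewise for $\beta$; (ii) $\alpha\circ\beta=\beta\circ\alpha$, whence $\beta^2$ commutes with both $\alpha$ and $\beta$; and (iii) $f$ is itself a cochain, so it intertwines the argument-wise action of $\alpha$ (resp. $\beta$) with $\alpha$ (resp. $\beta$) on its output.

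First I would dispose of the case $m=1$. Substituting $\widetilde{\alpha}(X)$ and $\alpha(x_n)$ into \eqref{isma}, the leading term becomes $-f([\alpha(x_1),\cdots,\alpha(x_n)]_{\mathfrak{g}})=-f(\alpha([x_1,\cdots,x_n]_{\mathfrak{g}}))=-\alpha(f([x_1,\cdots,x_n]_{\mathfrak{g}}))$ by (i) and (iii), while each summand $[\alpha(x_1),\cdots,f(\alpha(x_i)),\cdots,\alpha(x_n)]_{\mathfrak{g}}$ equals $[\alpha(x_1),\cdots,\alpha(f(x_i)),\cdots,\alpha(x_n)]_{\mathfrak{g}}=\alpha([x_1,\cdots,f(x_i),\cdots,x_n]_{\mathfrak{g}})$, again by (iii) followed by (i). Summing over $i$ yields exactly $\alpha(\delta^1 f(X,x_n))$, and the argument for $\beta$ is word-for-word the same.

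The case $m=2$ is the same mechanism applied to \eqref{6}, and it carries the only real difficulty of the lemma: the heavy bookkeeping required to keep the twists $\beta^2$, $\beta$ and $\alpha$ correctly aligned. Replacing every $x_j$ by $\alpha(x_j)$ and every $y_j$ by $\alpha(y_j)$, I would first use (ii) to move the substituted $\alpha$ outward past the fixed twists, e.g. $\beta^2(\alpha(x_j))=\alpha(\beta^2(x_j))$ and $\beta(\alpha(y_j))=\alpha(\beta(y_j))$, so that every slot of every term now carries one outer $\alpha$. Each of the four families of terms is either of the form $[\beta^2(\cdots),\cdots,f(\cdots)]_{\mathfrak{g}}$ or $f(\beta^2(\cdots),\cdots,[\cdots]_{\mathfrak{g}})$; on each I push the outer $\alpha$ through $f$ by (iii) and through the inner and outer brackets by (i), collecting a single $\alpha$ in front of the whole term. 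Since the signs $(-1)^{n-i}$ and the omitted (hatted) positions are unaffected by the substitution, the summed expression is precisely $\alpha(\delta^2 f(X,Y,y_n))$. Repeating the computation with $\beta$ in place of $\alpha$---using once more that $\beta$ is a morphism commuting with $\alpha$---gives the second identity, which establishes that $\delta^m f\in C^{m+1}(\mathfrak{g},\mathfrak{g})$ for $m=1,2$ and hence that $\delta^m$ is well defined.
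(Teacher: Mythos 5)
Your proof is correct and is essentially the paper's own argument: the paper's proof consists only of the remark that the lemma follows ``by a direct computation,'' and your verification---pushing $\alpha$ (resp.\ $\beta$) through the brackets via the morphism property, through $f$ via the cochain condition, and past the fixed twists $\beta^{2},\beta$ via $\alpha\circ\beta=\beta\circ\alpha$---is exactly that computation written out for $m=1,2$. Nothing further is needed.
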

\begin{proof}
The proof is straighforward by a direct computation.
\end{proof}
\begin{thm}
The coboundary operator $\delta^{1}$ and $\delta^{2}$ defined above satisfy $\delta^{2}\circ\delta^{1}=0$.
\end{thm}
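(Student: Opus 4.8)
The plan is to prove the identity by a direct substitution followed by a term-by-term cancellation, using three ingredients: the compatibility of the cochain $f$ with the structure maps (so that $f\circ\alpha=\alpha\circ f$ and $f\circ\beta=\beta\circ f$, hence $f(\beta(y_j))=\beta(f(y_j))$, $f(\beta^2(x_j))=\beta^2(f(x_j))$, etc.), the bilinearity of everything in sight, and --- crucially --- the $n$-BiHom-Jacobi identity from the defining Definition. First I would set $g:=\delta^1 f$, which is an $n$-ary cochain lying in the domain of $\delta^2$, and write out $\delta^2 g(X,Y,y_n)$ from \eqref{6}. This expression contains four occurrences of $g$: two in the first two terms (namely $[\beta^2(x_1),\dots,g(\beta(y_1),\dots,\alpha(y_n))]_{\mathfrak{g}}$ and $g(\beta^2(x_1),\dots,[\beta(y_1),\dots,\alpha(y_n)]_{\mathfrak{g}})$) and two inside the sum over $i$. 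Into each I substitute $g(a_1,\dots,a_n)=-f([a_1,\dots,a_n]_{\mathfrak{g}})+\sum_{j}[a_1,\dots,f(a_j),\dots,a_n]_{\mathfrak{g}}$ from \eqref{isma}.

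After expansion the resulting terms separate into two families according to where $f$ lands. The first family consists of the terms in which $f$ is applied to a whole bracket. Here the term $[\beta^2(x_1),\dots,\beta^2(x_{n-1}),f([\beta(y_1),\dots,\alpha(y_n)]_{\mathfrak{g}})]_{\mathfrak{g}}$ produced by the last slot of $g(\beta^2(x_1),\dots,[\beta(y_1),\dots,\alpha(y_n)]_{\mathfrak{g}})$ cancels, with opposite sign, the term coming from the $-f([\cdot])$ piece of the leading $g$; similarly the two $f$-on-$[\beta(x_1),\dots,\alpha(y_i)]_{\mathfrak{g}}$ terms produced by the two $g$'s inside the sum cancel against each other. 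The two remaining ``$f$ outside'' terms are $-f\big([\beta^2(x_1),\dots,\beta^2(x_{n-1}),[\beta(y_1),\dots,\alpha(y_n)]_{\mathfrak{g}}]_{\mathfrak{g}}\big)$ and $+\sum_i(-1)^{n-i}f\big([\beta^2(y_1),\dots,\widehat{\beta^2(y_i)},\dots,\beta^2(y_n),[\beta(x_1),\dots,\alpha(y_i)]_{\mathfrak{g}}]_{\mathfrak{g}}\big)$; by linearity of $f$ these combine into $-f$ applied to exactly the difference of the two sides of the $n$-BiHom-Jacobi identity, which is $0$.

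The second, harder, family consists of the terms in which $f$ is applied to a single entry, which I would organize by the variable carrying $f$. The ``$x$-type'' terms (where $f$ hits some $\beta^2(x_j)$ in the first two terms, or some $\beta(x_l)$ inside the inner bracket in the sum) are matched by applying the $n$-BiHom-Jacobi identity to the doubly-nested bracket $[\beta^2(x_1),\dots,\beta^2(f(x_j)),\dots,[\beta(y_1),\dots,\alpha(y_n)]_{\mathfrak{g}}]_{\mathfrak{g}}$ with $x_j$ replaced by $f(x_j)$; using $\beta f=f\beta$ this is turned into precisely the negatives of the sum-terms carrying $f(\beta(x_l))$, so they cancel after relabelling. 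The ``$y$-type'' terms are the delicate ones: applying the $n$-BiHom-Jacobi identity to $[\beta^2(x_1),\dots,[\beta(y_1),\dots,f(\cdot)\text{ in slot }j,\dots,\alpha(y_n)]_{\mathfrak{g}}]_{\mathfrak{g}}$ produces, for each $j$, a diagonal contribution ($i=j$) in which $f$ migrates onto $\alpha(y_j)$ and an off-diagonal contribution ($i\neq j$) in which $f$ sits on $\beta^2(y_j)$ in the outer product. The diagonal contributions, together with the special slot $j=n$ (where $f$ already lands on $\alpha(y_n)$), cancel the terms of the form $[\beta^2(y_1),\dots,\widehat{\beta^2(y_i)},\dots,[\beta(x_1),\dots,f(\alpha(y_i))]_{\mathfrak{g}}]_{\mathfrak{g}}$, while the off-diagonal contributions cancel the terms carrying $f(\beta^2(y_k))$ in the outer product, again after using $\beta^2 f=f\beta^2$ and relabelling $j\leftrightarrow k$.

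The main obstacle is entirely in this last step: keeping the signs $(-1)^{n-i}$, the hat positions, and the twists $\alpha,\beta,\beta^2$ correctly aligned while splitting each Jacobi application into its diagonal and off-diagonal parts, and in particular treating the slot $j=n$ (where $f$ is already on $\alpha(y_n)$) on the same footing as the slots $j<n$. Once the indices are matched, every remaining term cancels in pairs, so $\delta^2\circ\delta^1=0$. No identity beyond the $n$-BiHom-Jacobi identity and the multiplicativity of $f$ with $\alpha$ and $\beta$ is needed; the entire argument is a careful reorganization of the expansion.
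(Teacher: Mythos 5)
Your proposal is correct and is essentially the paper's own proof: you expand $\delta^{2}(\delta^{1}f)$ into the same collection of terms, cancel the two pairs where $f$ sits on the inner bracket $[\beta(y_{1}),\dots,\alpha(y_{n})]_{\mathfrak{g}}$ resp.\ $[\beta(x_{1}),\dots,\alpha(y_{k})]_{\mathfrak{g}}$ (the paper's $(\ref{1.0})+(\ref{5.0})=(\ref{6.0})+(\ref{11.0})=0$), and then use the $n$-BiHom-Jacobi identity together with $f\circ\alpha=\alpha\circ f$, $f\circ\beta=\beta\circ f$ to kill the ``$f$-outside'' pair, the $x$-type pair, and the $y$-type triple (the paper's $(\ref{3.0})+(\ref{9.0})=(\ref{4.0})+(\ref{7.0})=(\ref{2.0})+(\ref{8.0})+(\ref{10.0})=0$), with your diagonal/off-diagonal split of the Jacobi applications being exactly how the last group cancels. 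Your reading even repairs a typo in the paper's displayed term corresponding to $(\ref{8.0})$, which should carry $f(\alpha(y_{k}))$ in the last slot of the inner bracket.
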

\begin{proof}
For any $f\in C^{1}(\mathfrak{g}, \mathfrak{g})$, we have
\begin{eqnarray}
&&\delta^{2}\circ\delta^{1}f(X,Y,y_{n})\nonumber\\
&=&[\beta^{2}(x_{1}),\ldots,\beta^{2}(x_{n-1}),\delta^{1}f(\beta(y_{1}),\ldots,\beta(y_{n-1}),\alpha(y_{n}))]_{\mathfrak{g}} \nonumber\\
&+&\delta^{1}f(\beta^{2}(x_{1}),\ldots,\beta^{2}(x_{n-1}),[\beta(y_{1}),\ldots,\beta(y_{n-1}),\alpha(y_{n})]_{\mathfrak{g}})\nonumber\\
&-& \displaystyle{\sum_{k=1}^{n}}(-1)^{n-k}\Big([\beta^{2}(y_{1}),\ldots,\widehat{\beta^{2}(y_{k})},\ldots,\beta^{2}(y_{n}),\delta^{1}f(\beta(x_{1}),\ldots, \beta(x_{n-1}),\alpha(y_{k}))]_{\mathfrak{g}}\nonumber\\&+&\delta^{1}f(\beta^{2}(y_{1}),\ldots,\widehat{\beta^{2}(y_{k})},\ldots,\beta^{2}(y_{n}),[\beta(x_{1}),\ldots, \beta(x_{n-1}),\alpha(y_{k})]_{\mathfrak{g}})\Big)\nonumber\\
&=&-[\beta^{2}(x_{1}),\cdots,\beta^{2}(x_{n-1}),f([\beta(y_{1}),\cdots,\beta(y_{n-1}),\alpha(y_{n})]_{\mathfrak{g}})]_{\mathfrak{g}}\label{1.0}\\
&+&[\beta^{2}(x_{1}),\cdots,\beta^{2}(x_{n-1}),\sum_{i=1}^{n}[\beta(y_{1}),\cdots,f(\beta(y_{i})),\cdots,\alpha(y_{n})]_{\mathfrak{g}}]_{\mathfrak{g}}\label{2.0}\\
&-&f([\beta^{2}(x_{1}),\cdots,\beta^{2}(x_{n-1}),[\beta(y_{1}),\cdots,\beta(y_{n-1}),\alpha(y_{n})]_{\mathfrak{g}}]_{\mathfrak{g}})\label{3.0}\\
&+&\sum_{i=1}^{n-1}[\beta^{2}(x_{1}),\cdots,f(\beta^{2}(x_{i})),\cdots,\beta^{2}(x_{n-1}),[\beta(y_{1}),\cdots,\beta(y_{n-1}),\alpha(y_{n})]_{\mathfrak{g}}]_{\mathfrak{g}}\label{4.0}\\
&+&[\beta^{2}(x_{1}),\cdots,\beta^{2}(x_{n-1}),f([\beta(y_{1}),\cdots,\beta(y_{n-1}),\alpha(y_{n})]_{\mathfrak{g}})]_{\mathfrak{g}}\label{5.0}\\
&+&\displaystyle{\sum_{k=1}^{n}}(-1)^{n-k}[\beta^{2}(y_{1}),\cdots,\widehat{\beta^{2}(y_{k})},\cdots,\beta^{2}(y_{n}),f([\beta(x_{1}),\cdots,\beta(x_{n-1}),\alpha(y_{k})]_{\mathfrak{g}})]_{\mathfrak{g}}\label{6.0}\\
&-&\displaystyle{\sum_{k=1}^{n}}(-1)^{n-k}[\beta^{2}(y_{1}),\cdots,\widehat{\beta^{2}(y_{k})},\cdots,\beta^{2}(y_{n}),\sum_{i=1}^{n-1}[\beta(x_{1}),\cdots,f(\beta(x_{i})),\cdots,
\beta(x_{n-1}),\alpha(y_{k})]_{\mathfrak{g}}]_{\mathfrak{g}}\label{7.0}\\
&-&\displaystyle{\sum_{k=1}^{n}}(-1)^{n-k}[\beta^{2}(y_{1}),\cdots,\widehat{\beta^{2}(y_{k})},\cdots,\beta^{2}(y_{n}),[\beta(x_{1}),\cdots,\beta(x_{n-1}),\alpha(y_{k})]_{\mathfrak{g}}]_{\mathfrak{g}}\label{8.0}\\
&+&\displaystyle{\sum_{k=1}^{n}}(-1)^{n-k}f([\beta^{2}(y_{1}),\cdots,\widehat{\beta^{2}(y_{k})},\cdots,\beta^{2}(y_{n}),[\beta(x_{1}),\cdots,\beta(x_{n-1}),\alpha(y_{k})]_{\mathfrak{g}}]_{\mathfrak{g}})\label{9.0}\\
&-&\displaystyle{\sum_{k=1}^{n}}\displaystyle{\sum_{i=1,i\neq k}^{n}}(-1)^{n-k}[\beta^{2}(y_{1}),\cdots,\widehat{\beta^{2}(y_{k})},\cdots,f(\beta^{2}(y_{i}),\cdots,\beta^{2}(y_{n}),[\beta(x_{1}),\cdots,\beta(x_{n-1}),\alpha(y_{k})]_{\mathfrak{g}}]_{\mathfrak{g}}\label{10.0}\\
&-&\displaystyle{\sum_{k=1}^{n}}(-1)^{n-k}[\beta^{2}(y_{1}),\cdots,\widehat{\beta^{2}(y_{k})},\cdots,\beta^{2}(y_{n}),f([\beta(x_{1}),\cdots,\beta(x_{n-1}),\alpha(y_{k})]_{\mathfrak{g}})]_{\mathfrak{g}}\label{11.0}.
\end{eqnarray}
By a direct computation, we get $(\ref{1.0})+(\ref{5.0})=(\ref{6.0})+(\ref{11.0})=0$, and by the $n$-Bihom-Jacobi condition, we obtain $(\ref{3.0})+(\ref{9.0})=(\ref{4.0})+(\ref{7.0})=(\ref{2.0})+(\ref{8.0})+(\ref{10.0})=0$.

Therefore $\delta^{2}\circ\delta^{1}=0,$ which completes the proof.
\end{proof}
For $m=1,2,$ the map $f\in C^m(\mathfrak{g},\mathfrak{g})$ is called an $m$-BiHom-cocycle $\delta^m f=0.$ We denote the subspace spanned by $m$-Bihom-cocycles by
 $Z^m(\mathfrak{g},\mathfrak{g})$ and $B^m(\mathfrak{g},\mathfrak{g})=\delta^{m-1}C^{m-1}(\mathfrak{g},\mathfrak{g}).$ Since  $\delta^2\circ\delta^1=0,$ $B^2(\mathfrak{g},\mathfrak{g})$ is a subspace of $Z^2(\mathfrak{g},\mathfrak{g}).$ Hence we can define a cohomology space $H^2(\mathfrak{g},\mathfrak{g})$ of as the factor space
 $Z^2(\mathfrak{g},\mathfrak{g})/B^2(\mathfrak{g},\mathfrak{g}).$
 \begin{defn}
Let $(\mathfrak{g},[\cdot,\cdots,\cdot]_{\mathfrak{g}},\alpha,\beta)$ be an $n$-BiHom-Lie algebras over $\mathbb{K}$. A deformation of $(\mathfrak{g},[\cdot,\cdots,\cdot]_{\mathfrak{g}},\alpha,\beta)$ is given by $\mathbb{K}[[t]]-n$-linear map
$$f_{t}=\sum_{p\geq0}f_{p}t^{p}:\mathfrak{g}[[t]]\times\cdots\times\mathfrak{g}[[t]]$$
such that $(\mathfrak{g}[[t]],f_t,\alpha,\beta)$ is also an $n$-BiHom-Lie algebras. We call $f_1$ the infinitesimal deformation of $(\mathfrak{g},[\cdot,\cdots,\cdot],\alpha,\beta)$.
 \end{defn}
Since $(\mathfrak{g}[[t]],f_t,\alpha,\beta)$ is an $n$-BiHom-Lie algebras, $f_t$ satisfies
\begin{equation}\label{1}
\alpha\circ f_t(x_{1},\cdots,x_n)=f_t(\alpha(x_1),\cdots,\alpha(x_n)),
\end{equation}
 \begin{equation}\label{2}
\beta\circ f_t(x_{1},\cdots,x_n)=f_t(\beta(x_1),\cdots,\beta(x_n)),
\end{equation}
\begin{equation}
\begin{array}{llll}\label{3}
&&f_t(\beta^{2}(x_{1}),\ldots,\beta^{2}(x_{n-1}),f_t(\beta(y_{1}),\ldots,\beta(y_{n-1}),\alpha(y_{n}))) \\&=& \sum_{k=1}^{n} (-1)^{n-k}f_t(\beta^{2}(y_{1}),\ldots,\beta^{2}(y_{k-1}),\beta^{2}(y_{k+1}),\ldots,\beta^{2}(y_{n}),f_t(\beta(x_{1}),\ldots, \beta(x_{n-1}),\alpha(y_{k})))
\end{array}
\end{equation}
(\ref{1}), (\ref{2}) and (\ref{3}) are equivalents to
\begin{equation}\label{4}
\alpha\circ f_p(x_{1},\cdots,x_n)=f_p(\alpha(x_1),\cdots,\alpha(x_n)),
\end{equation}
 \begin{equation}\label{5}
\beta\circ f_p(x_{1},\cdots,x_n)=f_p(\beta(x_1),\cdots,\beta(x_n)),
\end{equation}
\begin{equation}
\begin{array}{llll}\label{6}
&&\sum_{p+q=l}f_p(\beta^{2}(x_{1}),\ldots,\beta^{2}(x_{n-1}),f_q(\beta(y_{1}),\ldots,\beta(y_{n-1}),\alpha(y_{n}))) \\&=& \sum_{i=1}^{n}\Big(\sum_{p+q=l}(-1)^{n-i}f_p(\beta^{2}(y_{1}),\ldots,\beta^{2}(y_{k-1}),\beta^{2}(y_{k+1}),\ldots,\beta^{2}(y_{n}),f_q(\beta(x_{1}),\ldots, \beta(x_{n-1}),\alpha(y_{k}))\Big)
\end{array}
\end{equation}
We call these the deformation equations for an $n$-BiHom-Lie algebra.

(\ref{4}) and (\ref{5}) shows that $f_p\in C^{1}(\mathfrak{g},\mathfrak{g})$. In (\ref{6}), set $l=1$, then
$$\begin{array}{llll}\label{6.00}
&&[\beta^{2}(x_{1}),\ldots,\beta^{2}(x_{n-1}),f_1(\beta(y_{1}),\ldots,\beta(y_{n-1}),\alpha(y_{n}))]_{\mathfrak{g}} \\
&+&f_1(\beta^{2}(x_{1}),\ldots,\beta^{2}(x_{n-1}),[\beta(y_{1}),\ldots,\beta(y_{n-1}),\alpha(y_{n})]_{\mathfrak{g}})\\
&-& \displaystyle{\sum_{i=1}^{n}}(-1)^{n-i}\Big([\beta^{2}(y_{1}),\ldots,\beta^{2}(y_{i-1}),\beta^{2}(y_{i+1}),\ldots,\beta^{2}(y_{n}),f_1(\beta(x_{1}),\ldots, \beta(x_{n-1}),\alpha(y_{i}))]_{\mathfrak{g}}\\&+&f_1(\beta^{2}(y_{1}),\ldots,\beta^{2}(y_{i-1}),\beta^{2}(y_{i+1}),\ldots,\beta^{2}(y_{n}),[\beta(x_{1}),\ldots, \beta(x_{n-1}),\alpha(y_{i})]_{\mathfrak{g}})\Big)=0,
\end{array}$$
i.e., $\delta^{2}f_{1}(x_1,\cdots,x_{n-1},y_{1},\cdots,y_{n-1},y_n)=0$. Hence the infinitesimal deformation $f_1\in Z^{2}(\mathfrak{g},\mathfrak{g})$.
\begin{defn}
Two deformations $f_t$ and $f'_t$ of the $n$-BiHom-Lie algebras $(\mathfrak{g},[\cdot,\cdots,\cdot],\alpha,\beta)$ are said to be equivalent, if there exists a formal automorphism
of $n$-BiHom-Lie algebras $\Psi_{t}:(\mathfrak{g},f_t,\alpha,\beta)\longrightarrow(\mathfrak{g},f'_t,\alpha,\beta)$ that may be written in
the form $\Psi_{t}=\sum_{i\geq0}\psi_i t^{i},~\psi_{i}:\mathfrak{g}\longrightarrow\mathfrak{g}$ is a linear map such that
$$\psi_{0}=id_{\mathfrak{g}};~\psi_{i}\circ\alpha=\alpha\circ\psi_{i};~\psi_{i}\circ\beta=\beta\circ\psi_{i};$$
$$\Psi_{t}\circ f_{t}(x_1,\cdots,x_n)=f'_t(\Psi_{t}(x_1),\cdots,\Psi_{t}(x_{n}),$$
and is denoted by $f_t\equiv f'_t$. When $f_1=f_2=\cdots=0,~f_1=f_0$ is called the nulldeformation; if $f_t\equiv f_0$, then $f_t$ is called the trivial deformation.
\end{defn}
\begin{thm}
Let $f_t$ and $f'_{t}$ be two equivalent deformations of the $n$-BiHom-Lie algebra $(\mathfrak{g},[\cdot,\cdots,\cdot]_{\mathfrak{g}},\alpha,\beta)$. Then the infinitesimal deformations $f_1$ and $f'_1$ belong to the same cohomology class in the cohomology group $H^{2}(\mathfrak{g},\mathfrak{g})$
\end{thm}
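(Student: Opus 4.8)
The plan is to extract the first-order ($t^{1}$) information from the equivalence relation between $f_t$ and $f'_t$ and to recognize the difference $f_1-f'_1$ as the coboundary $\delta^{1}\psi_1$, which immediately places $f_1$ and $f'_1$ in the same class of $H^{2}(\mathfrak{g},\mathfrak{g})$. Throughout I would write both deformations as $f_t=\sum_{p\geq0}f_pt^{p}$ and $f'_t=\sum_{p\geq0}f'_pt^{p}$ with $f_0=f'_0=[\cdot,\cdots,\cdot]_{\mathfrak{g}}$, and record the defining identity of the formal automorphism $\Psi_t=\sum_{i\geq0}\psi_it^{i}$, namely $\Psi_t\circ f_t(x_1,\cdots,x_n)=f'_t(\Psi_t(x_1),\cdots,\Psi_t(x_n))$, together with $\psi_0=\mathrm{id}_{\mathfrak{g}}$.

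First I would expand the left-hand side as $\sum_{i,p}\psi_i\bigl(f_p(x_1,\cdots,x_n)\bigr)t^{i+p}$, whose coefficient of $t^{1}$ is $f_1(x_1,\cdots,x_n)+\psi_1([x_1,\cdots,x_n]_{\mathfrak{g}})$. Next I would expand the right-hand side and collect the contributions of total degree one: either $f'_1$ evaluated on the untouched arguments (from $q=1$ and all $i_j=0$), or $f'_0=[\cdot,\cdots,\cdot]_{\mathfrak{g}}$ with exactly one slot carrying $\psi_1$ (from $q=0$ and a single $i_j=1$). This gives the $t^{1}$-coefficient $f'_1(x_1,\cdots,x_n)+\sum_{j=1}^{n}[x_1,\cdots,\psi_1(x_j),\cdots,x_n]_{\mathfrak{g}}$.

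Equating the two coefficients then produces
\[
f_1(x_1,\cdots,x_n)-f'_1(x_1,\cdots,x_n)
=\sum_{j=1}^{n}[x_1,\cdots,\psi_1(x_j),\cdots,x_n]_{\mathfrak{g}}-\psi_1([x_1,\cdots,x_n]_{\mathfrak{g}}),
\]
and comparing the right-hand side with the formula (\ref{isma}) for $\delta^{1}$ shows that it equals $\delta^{1}\psi_1(x_1,\cdots,x_n)$ exactly. Since the equivalence data satisfy $\psi_1\circ\alpha=\alpha\circ\psi_1$ and $\psi_1\circ\beta=\beta\circ\psi_1$, the map $\psi_1$ is a genuine $1$-cochain, so $f_1-f'_1=\delta^{1}\psi_1\in B^{2}(\mathfrak{g},\mathfrak{g})$; as both $f_1$ and $f'_1$ lie in $Z^{2}(\mathfrak{g},\mathfrak{g})$, they represent the same class in $H^{2}(\mathfrak{g},\mathfrak{g})$.

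I expect the only delicate point to be the bookkeeping of the $t^{1}$-coefficient on the right-hand side: one must use the multilinearity of $f'_0$ to distribute $\Psi_t$ across all $n$ arguments and check that, at first order, precisely the $n$ terms with a single $\psi_1$ insertion survive, every other mixed contribution (products $\psi_{i_1}\otimes\cdots\otimes\psi_{i_n}$ with $\sum_j i_j\geq2$, or $f'_q$ with $q\geq2$) being of order $\geq2$ in $t$. Once this is settled, the remainder is a direct matching against the definition of $\delta^{1}$ and carries no further difficulty.
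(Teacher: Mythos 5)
Your proposal is correct and follows essentially the same route as the paper's own proof: expand the automorphism identity $\Psi_t\circ f_t=f'_t\circ(\Psi_t\times\cdots\times\Psi_t)$, compare coefficients of $t^{1}$, and identify $f_1-f'_1$ with $\delta^{1}\psi_1\in B^{2}(\mathfrak{g},\mathfrak{g})$ via the formula (\ref{isma}). Your added remarks---that $\psi_1$ commutes with $\alpha,\beta$ and hence is a genuine $1$-cochain, and that $f_1,f'_1\in Z^{2}(\mathfrak{g},\mathfrak{g})$---are points the paper uses implicitly, so the arguments coincide.
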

\begin{proof}
Put $B^{2}(\mathfrak{g},\mathfrak{g}):=\delta^{1}C^{1}(\mathfrak{g},\mathfrak{g})$. It is enough to prove that $f_1-f'_1\in B^{2}(\mathfrak{g},\mathfrak{g})$. Let
$\Psi_{t}:(\mathfrak{g},f_t,\alpha,\beta)\longrightarrow(\mathfrak{g},f'_t,\alpha,\beta)$ be an isomorphism of $n$-BiHom-Lie algebras. Then $\psi_1\in C^{1}(\mathfrak{g},\mathfrak{g})$
and
$$\sum_{i\geq 0}\psi_i(\sum_{j\geq 0}f_j(x_{1},\cdots,x_{n}))t^{i+j}=\sum_{i\geq 0}f'_{i}(\sum_{j_{1}\geq 0}\psi_{j_{1}}(x_1),\cdots,\sum_{j_{n}\geq 0}\psi_{j_{n}}(x_{n}))t^{i+j_{1}+\cdots+j_{n}},$$
comparing with the coefficients of $t^{1}$ for two sides of the above equation, we obtain
$$\begin{array}{lllll}&&f_{1}(x_{1},\cdots,x_{n})+\psi_{1}([x_{1},\cdots,x_{n}]_{\mathfrak{g}})\\
&=&[\psi_{1}(x_{1},x_{2},\cdots,x_{n}]_{\mathfrak{g}}+[x_{1},\psi_{1}(x_{2}),\cdots,x_{n}]+\cdots+[x_{1},\cdots,x_{n-1},\psi_{1}(x_{n})]_{\mathfrak{g}}+f'_{1}(x_{1},\cdots,x_{n}).\end{array}$$
Furthermore, one gets
$$\begin{array}{lll}&&f_{1}(x_{1},\cdots,x_{n})-f'_{1}(x_{1},\cdots,x_{n})=-\psi_{1}([x_{1},\cdots,x_{n}]_{\mathfrak{g}})+
[\psi_{1}(x_{1},x_{2},\cdots,x_{n}]_{\mathfrak{g}}\\&&+[x_{1},\psi_{1}(x_{2}),\cdots,x_{n}]_{\mathfrak{g}}+\cdots+[x_{1},\cdots,x_{n-1},\psi_{1}(x_{n})]_{\mathfrak{g}}\\
&=&-\psi_{1}([x_{1},\cdots,x_{n}]_{\mathfrak{g}})+\sum_{i=1}^{n}[x_{1},\cdots,\psi(x_{i}),\cdots,x_{n}]_{\mathfrak{g}}\\
&=&\delta^{1}\psi_{1}(x_{1},\cdots,x_{n}).
\end{array}$$
Therefore, $f_{1}-f'_{1}=\delta^{1}\psi_{1}\in\delta^{1}C^{1}(\mathfrak{g},\mathfrak{g})$, i.e., $f_{1}-f'_{1}\in B^{2}(\mathfrak{g},\mathfrak{g}).$
\end{proof}
\begin{rmk}
An $n$-BiHom-Lie algebra $(\mathfrak{g},[\cdot,\cdots,\cdot]_{\mathfrak{g}},\alpha,\beta)$ is analytically
rigid if every deformation $f_t$ is equivalent to the null deformation $f_{0}$.
\end{rmk}
We have a fundamental theorem.
\begin{thm}
If $(\mathfrak{g},[\cdot,\cdots,\cdot]_{\mathfrak{g}},\alpha,\beta)$ is an $n$-BiHom-Lie algebra with $H^{2}(\mathfrak{g},\mathfrak{g})=0$, then $(\mathfrak{g},[\cdot,\cdots,\cdot]_{\mathfrak{g}},\alpha,\beta)$ is analytically rigid.
\end{thm}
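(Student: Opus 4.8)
The plan is to argue by induction on the order of the lowest-degree term by which a given deformation differs from the null deformation $f_0=[\cdot,\cdots,\cdot]_{\mathfrak{g}}$, showing at each stage that this term can be gauged away by a formal automorphism, precisely because $H^{2}(\mathfrak{g},\mathfrak{g})=0$. Concretely, let $f_t=\sum_{p\ge 0}f_p t^{p}$ be an arbitrary deformation of $(\mathfrak{g},[\cdot,\cdots,\cdot]_{\mathfrak{g}},\alpha,\beta)$. If $f_t\neq f_0$, let $k\ge 1$ be minimal with $f_k\neq0$, so that $f_1=\cdots=f_{k-1}=0$. I will construct an equivalent deformation $f'_t$ whose first nonzero term beyond $f_0$ occurs in degree strictly larger than $k$; iterating and passing to the $t$-adic limit will then yield an equivalence between $f_t$ and $f_0$, which is exactly the statement that $f_t$ is trivial.

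First I would show $f_k\in Z^{2}(\mathfrak{g},\mathfrak{g})$. Writing the deformation equation \eqref{6} at order $l=k$ and using $f_p=0$ for $1\le p\le k-1$, every mixed summand with $0<p<k$ drops out, leaving exactly the pairs $(p,q)=(0,k)$ and $(p,q)=(k,0)$. These assemble into $\delta^{2}f_k=0$, by the same computation already carried out in the excerpt for $f_1$ in the case $l=1$. Hence $f_k$ is a $2$-cocycle. Since $H^{2}(\mathfrak{g},\mathfrak{g})=Z^{2}(\mathfrak{g},\mathfrak{g})/B^{2}(\mathfrak{g},\mathfrak{g})=0$ by hypothesis, we have $f_k\in B^{2}(\mathfrak{g},\mathfrak{g})=\delta^{1}C^{1}(\mathfrak{g},\mathfrak{g})$, so there is $\psi_k\in C^{1}(\mathfrak{g},\mathfrak{g})$ with $f_k=\delta^{1}\psi_k$. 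Being a $1$-cochain, $\psi_k$ automatically satisfies $\psi_k\circ\alpha=\alpha\circ\psi_k$ and $\psi_k\circ\beta=\beta\circ\psi_k$, which will be needed for $\psi_k$ to define a legitimate formal automorphism.

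Next I would perform the reduction step. Set $\Psi_t=\mathrm{id}_{\mathfrak{g}}+t^{k}\psi_k$, a formal automorphism with $\Psi_t^{-1}=\mathrm{id}_{\mathfrak{g}}-t^{k}\psi_k+O(t^{2k})$, and define $f'_t$ by $f'_t(y_1,\cdots,y_n)=\Psi_t\,f_t(\Psi_t^{-1}y_1,\cdots,\Psi_t^{-1}y_n)$. Because $\psi_k$ commutes with $\alpha,\beta$ and $f_t$ satisfies \eqref{4}--\eqref{5}, the conjugate $f'_t$ is again a deformation with the same twisting maps, and $f_t\equiv f'_t$. Expanding to order $t^{k}$ and using the $n$-linearity of $f_0=[\cdot,\cdots,\cdot]_{\mathfrak{g}}$ gives
\begin{equation*}
f'_k(y_1,\cdots,y_n)=f_k(y_1,\cdots,y_n)+\psi_k([y_1,\cdots,y_n]_{\mathfrak{g}})-\sum_{i=1}^{n}[y_1,\cdots,\psi_k(y_i),\cdots,y_n]_{\mathfrak{g}},
\end{equation*}
which by the formula \eqref{isma} for $\delta^{1}$ is precisely $f'_k=f_k-\delta^{1}\psi_k=0$. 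Thus $f'_t=f_0+O(t^{k+1})$, and all the intermediate terms $f'_1,\cdots,f'_k$ vanish, so the lowest nontrivial order has strictly increased.

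The main obstacle is the passage from ``equivalent to $f_0$ modulo $t^{m}$ for every $m$'' to a genuine equivalence of formal deformations. I would handle this by composing the successively constructed automorphisms $\Psi_t^{(k)},\Psi_t^{(k+1)},\dots$, each of the form $\mathrm{id}_{\mathfrak{g}}+O(t^{k})$ with strictly increasing leading order. Since they differ from the identity in ever higher degree, their infinite composition converges in the $t$-adic topology to a well-defined formal automorphism $\Phi_t=\sum_{i\ge0}\phi_i t^{i}$ with $\phi_0=\mathrm{id}_{\mathfrak{g}}$ and $\phi_i\circ\alpha=\alpha\circ\phi_i$, $\phi_i\circ\beta=\beta\circ\phi_i$. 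This $\Phi_t$ realizes $f_t\equiv f_0$, so every deformation is equivalent to the null deformation and $(\mathfrak{g},[\cdot,\cdots,\cdot]_{\mathfrak{g}},\alpha,\beta)$ is analytically rigid.
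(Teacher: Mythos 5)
Your proposal is correct and follows essentially the same route as the paper's own proof: identify the lowest-order nonzero term $f_k$ as a $2$-cocycle via the deformation equation at order $l=k$, use $H^{2}(\mathfrak{g},\mathfrak{g})=0$ to write $f_k=\delta^{1}\psi_k$, gauge it away with the formal automorphism $\mathrm{id}_{\mathfrak{g}}\pm t^{k}\psi_k$ (the paper uses $\Phi_t=\mathrm{id}_{\mathfrak{g}}-h_rt^{r}$ and conjugates in the opposite direction, which amounts to the same thing), and iterate. The only difference is cosmetic: you spell out the $t$-adic convergence of the infinite composition of automorphisms, a point the paper compresses into ``by induction, one can prove $f_t\sim f_0$.''
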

\begin{proof}
Let $f_t$ be a deformation of the $n$-BiHom-Lie algebra $(\mathfrak{g},[\cdot,\cdots,\cdot]_{\mathfrak{g}},\alpha,\beta)$ with $f_t=f_0+f_{r}t^{r}+f_{r+1}t^{r+1}+\cdots,$ i.e., $f_{1}=f_{2}=\cdots=f_{r-1}=0$. Then set $l=r$ in (\ref{6}), we have
$$\begin{array}{lllll}&&f_r(\beta^{2}(x_{1}),\ldots,\beta^{2}(x_{n-1}),[\beta(y_{1}),\ldots,\beta(y_{n-1}),\alpha(y_{n})]_{\mathfrak{g}})\\
&+&[\beta^{2}(x_{1}),\ldots,\beta^{2}(x_{n-1}),f_r(\beta(y_{1}),\ldots,\beta(y_{n-1}),\alpha(y_{n}))]_{\mathfrak{g}} \\
&&-\sum_{i=1}^{n}(-1)^{n-i}\Big([\beta^{2}(y_{1}),\ldots,\beta^{2}(y_{k-1}),\beta^{2}(y_{k+1}),\ldots,\beta^{2}(y_{n}),f_r(\beta(x_{1}),\ldots,\\ &&\beta(x_{n-1}),\alpha(y_{k}))]_{\mathfrak{g}}+f_r(\beta^{2}(y_{1}),\ldots,\beta^{2}(y_{k-1}),\beta^{2}(y_{k+1}),\ldots,\beta^{2}(y_{n}),[\beta(x_{1}),\ldots, \\ &&\beta(x_{n-1}),\alpha(y_{k})]_{\mathfrak{g}})\Big)=0\end{array}$$
i.e, $\delta^{2}f_{r}(x_{1},\cdots,x_{n-1},y_{1},\cdots,y_{n})=0,$
$\delta^{2}(f_r)=0,$ that is, $f_r\in Z^2(\mathfrak{g}, \mathfrak{g}).$ By our assumption $H^2(\mathfrak{g}, \mathfrak{g})=0$, one gets $f_r\in B^2(\mathfrak{g}, \mathfrak{g}),$ thus we can find $h_r\in C^1(\mathfrak{g}, \mathfrak{g})$ such that $f_r=\delta^{1}h_r.$ Put $\Phi_{t}=\mathrm{id}_{\mathfrak{g}}-h_rt^r,$ then $\Phi_{t}\circ(\mathrm{id}_{\mathfrak{g}}+h_rt^r+{h_{r}}^{2}t^{2r}+{h_{r}}^{3}t^{3r}+\cdots)
=(\mathrm{id}_{\mathfrak{g}}-h_rt^r)\circ(\mathrm{id}_{\mathfrak{g}}+h_rt^r+{h_{r}}^{2}t^{2r}+{h_{r}}^{3}t^{3r}+\cdots)=
(\mathrm{id}_{\mathfrak{g}}+h_rt^r+{h_{r}}^{2}t^{2r}+{h_{r}}^{3}t^{3r}+\cdots)-(h_rt^r+{h_{r}}^{2}t^{2r}+{h_{r}}^{3}t^{3r}+\cdots)=\mathrm{id}_{\mathfrak{g}},$
moreover, $(\mathrm{id}_{\mathfrak{g}}+h_rt^r+{h_{r}}^{2}t^{2r}+{h_{r}}^{3}t^{3r}+\cdots)\circ\Phi_{t}=\mathrm{id}_{\mathfrak{g}}.$  Hence $\Phi_{t}: \mathfrak{g}\rightarrow \mathfrak{g}$ is a linear isomorphism and $\Phi_{t}\circ\alpha=\alpha\circ\Phi_{t}.$ Set $f^{'}_{t}(x_{1},\cdots,x_{n})=\Phi_{t}^{-1}f_{t}(\Phi_{t}(x_{1}),\cdots,\Phi_{t}(x_{n})),$ then $f^{'}_{t}$ is also a  deformation of $(\mathfrak{g}, [\cdot,\cdots,\cdot],\alpha,\beta)$ and $f_{t}\sim f^{'}_{t}.$
Note that $\Phi_{t}f^{'}_{t}(x_{1},\cdots,x_{n})=f_{t}(\Phi_{t}(x_{1}),\cdots,\Phi_{t}(x_{n})).$ Let $f^{'}_{t}=\sum_{i\geq 0}f^{'}_{i}t^{i}.$ Then
$$(\mathrm{id}_{\mathfrak{g}}-h_rt^r)\sum_{i\geq 0}f^{'}_{i}(x_{1},\cdots,x_{n})t^{i}=(f_{0}+\sum_{i\geq r}f_{i}t^{i})(x_{1}-h_{r}(x_{1})t^{r},\cdots,x_{n}-h_{r}(x_{n})t^{r}).$$
So
\begin{align*}
&\sum_{i\geq 0}f^{'}_{i}(x_{1},\cdots,x_{n})t^{i}-\sum_{i\geq 0}h_{r}\circ f^{'}_{i}(x_{1},\cdots,x_{n})t^{i+r}\\
=&f_{0}(x_{1},\cdots,x_{n})-\sum_{i=1}^{n}f_{0}(x_{1},\cdots,h_{r}(x_{i}),\cdots,x_{n})t^{r}\\
+&\sum_{1\leq i<j\leq n}f_{0}(x_{1},\cdots,h_{r}(x_{i}),\cdots,h_{r}(x_{j}),\cdots,x_{n})t^{2r}\\
-&\sum_{1\leq i<j<k\leq n}f_{0}(x_{1},\cdots,h_{r}(x_{i}),\cdots,h_{r}(x_{j}),\cdots,h_{r}(x_{k}),\cdots,x_{n})t^{3r}+\cdots\\
+&(-1)^{n}f_{0}(h_{r}(x_{1}),h_{r}(x_{2}),\cdots,h_{r}(x_{n}))t^{nr}+\sum_{i\geq r}f_{i}(x_{1},\cdots,x_{n})t^{i}\\
-&\sum_{i\geq r}\sum_{j=1}^{n} f_{i}(x_{1},\cdots,h_{r}(x_{j}),\cdots,x_{n})t^{i+r}\\
+&\sum_{i\geq r}\sum_{1\leq j\leq k\leq n} f_{i}(x_{1},\cdots,h_{r}(x_{j}),\cdots,h_{r}(x_{k}),\cdots,x_{n})t^{i+2r}+\cdots.
\end{align*}
By the above equation, one gets
$$f^{'}_{0}(x_{1},\cdots,x_{n})=f_{0}(x_{1},\cdots,x_{n})=[x_{1},\cdots,x_{n}]_{\mathfrak{g}};$$
$$f^{'}_{1}(x_{1},\cdots,x_{n})=\cdots=f^{'}_{r-1}(x_{1},\cdots,x_{n})=0;$$
$$f^{'}_{r}(x_{1},\cdots,x_{n})-h_{r}[x_{1},\cdots,x_{n}]_{\mathfrak{g}}=-\sum_{i=1}^{n}[x_{1},\cdots,h_{r}(x_{i}),\cdots,x_{n}]_{\mathfrak{g}}+f_{r}(x_{1},\cdots,x_{n}).$$
Furthermore, we have $$f^{'}_{r}(x_{1},\cdots,x_{n})=-\delta^{1}h_{r}(x_{1},\cdots,x_{n})
+f_{r}(x_{1},\cdots,x_{n})=0,$$
hence, $f^{'}_{t}=f_{0}+\sum_{i\geq r+1}f^{'}_{i}t^{i}.$ By induction, one can prove $f_t\sim f_0,$ that is, $(\mathfrak{g}, [\cdot,\cdots,\cdot]_{\mathfrak{g}},\alpha,\beta)$ is analytically rigid.

\end{proof}

\medskip

\noindent{\bf Acknowledgements}
I would like to thank Nizar Ben Fraj, Sami Mabrouk and Liangyun Chen for
helpful discussions.

\label{lastpage-01}

\end{document}